\newtheorem{theorem}{Theorem}[section]
\newtheorem{lemma}[theorem]{Lemma}
\newtheorem{definition}[theorem]{Definition}
\newtheorem{corollary}[theorem]{Corollary}
\newtheorem{example}[theorem]{Example}
\newtheorem{claim}[theorem]{Claim}
\newtheorem{conjecture}[theorem]{Conjecture}
\newtheorem{proposition}[theorem]{Proposition}
\newtheorem{question}[theorem]{Question}
\newtheorem{assumption}[theorem]{Assumption}
\newcommand{\be}{\begin{equation}}
\newcommand{\ee}{\end{equation}}
\newcommand{\bea}{\begin{eqnarray}}
\newcommand{\eea}{\end{eqnarray}}
\begin{document}

\title{Hirzebruch manifolds and positive holomorphic
sectional curvature}

\author{Bo Yang}
\thanks{Research partially supported by an AMS-Simons Travel
Grant}

\address{Bo Yang. Department of Mathematics, 310 Malott Hall,
Cornell University, Ithaca, NY 14853-4201, USA.}
\email{{boyang@math.cornell.edu} }

\author{Fangyang Zheng}
\thanks{Research partially supported by a
Simons Collaboration Grant}
\address{Fangyang Zheng. Department of Mathematics, The
Ohio State University, 231 West 18th Avenue, Columbus, OH 43210, USA
and Department of Mathematics, Zhejiang Normal University, Jinhua,
321004, Zhejiang, China} \email{{zheng.31@osu.edu}}

\begin{abstract}

This paper is the first step in a systematic project to study
examples of K\"ahler manifolds with positive holomorphic sectional
curvature ($H > 0$). Previously Hitchin proved that any compact
K\"ahler surface with $H>0$ must be rational and he constructed such
examples on Hirzebruch surfaces $M_{2, k}=\mathbb{P}(H^{k}\oplus
1_{\mathbb{CP}^1})$. We generalize Hitchin's construction and prove
that any Hirzebruch manifold $M_{n, k}=\mathbb{P}(H^{k}\oplus
1_{\mathbb{CP}^{n-1}})$ admits a K\"ahler metric of $H>0$ in each of
its K\"ahler classes. We demonstrate that the pinching behaviors of
holomorphic sectional curvatures of new examples differ from those
of Hitchin's which were studied in the recent work of
Alvarez-Chaturvedi-Heier. Some connections to recent works on the
K\"ahler-Ricci flow on Hirzebruch manifolds are also discussed.

It seems interesting to study the space of all K\"ahler metrics of
$H>0$ on a given K\"ahler manifold. We give higher dimensional
examples such that some K\"ahler classes admit K\"ahler metrics with
$H>0$ and some do not.

\end{abstract}

\maketitle

\tableofcontents

\section{Introduction}

Let $(M, J, g)$ be a K\"ahler manifold, then one can define the
holomorphic sectional curvature of any $J$-invariant real $2$-plane
$\pi=\operatorname{Span}\{X, JX\}$ by
\[
H(\pi)=\frac{R(X, JX, JX, X)}{||X||^4}.
\]
It is the Riemannian sectional curvature restricted on any
$J$-invariant real $2$-plane (p165 \cite{KN}). Compact K\"ahler
manifolds with positive holomorphic sectional ($H>0$) form an
interesting class of K\"ahler manifolds. For example, these
manifolds are simply-connected, by the work of Tsukamoto
\cite{Tsukamoto}. An averaging argument due to Berger
\cite{Berger1966} showed that $H>0$ implies positive scalar
curvature, which further leads to the vanishing of its
pluri-canonical ring by a Bochner-Kodaira type identity, see
\cite{KW}. In 1975 Hitchin \cite{Hitchin} proved that any Hirzebruch
surface $M_{2, k}=\mathbb{P}(H^{k}\oplus 1_{\mathbb{CP}^1})$ admits
a K\"ahler metrics with $H>0$. Moreover, he proved that any rational
surface admits a K\"ahler metric with positive scalar curvature.

Recall that one can define another positive curvature condition on
K\"ahler manifolds, the so-called (holomorphic) bisectional
curvature, which is stronger than $H>0$. Any compact K\"ahler
manifold with positive holomorphic bisectional curvature is
biholormophic to $\mathbb{CP}^n$ by the work of Mori \cite{Mori} and
Siu-Yau \cite{SiuYau}. Motivated by these results and Hitchin's
example, Yau \cite{Yau1} asked if the positivity of holomorphic
sectional curvature can be used to characterize the rationality of
algebraic manifolds. More precisely, the following question was
asked.

\begin{question}[Yau, Problem 67 and 68 in
\cite{Yau2}]\label{rationality}
Consider a compact K\"ahler manifold with positive holomorphic
sectional curvature, is it unirational? Is it projective? If a
projective manifold is obtained by blowing up a compact manifold
with positive holomorphic sectional curvature along a subvariety,
does it still carry a metric with positive holomorphic sectional
curvature? In general, can we find a geometric criterion to
distinguish the concept of unirationality and rationality?
\end{question}

There are some recent progress on Question \ref{rationality}. For
example, an important citerion on non-uniruledness of projective
manifolds in terms of pseudoeffective canonical line bundles was
established by Boucksom-Demailly-P{\u{a}}un-Peternell \cite{BDPP}.
Heier-Wong \cite{HeierWong2012} gave a nice application of this
result to show that any projective manifold with a K\"ahler metric
with positive total scalar curvature is uniruled, i.e., having a
rational curve passing through every point. Very recently, they
\cite{HeierWong2015} proved that any projective manifold with a
K\"ahler metric of $H>0$ is rationally connected, i.e., any two
points can be connected by a rational curve. The latter paper also
contains analogous results for $H \geq 0$ and nonnegative Ricci
curvatures. There are also some recent studies on Hermitian
manifolds with $H \geq 0$ (\cite{YangX}).

In this paper, we focus on examples of K\"ahler metrics with $H>0$.
More specifically, we want to carry out a detailed study of such
metrics on any Hirzebruch manifold $M_{n, k}=\mathbb{P}(H^{k}\oplus
1_{\mathbb{CP}^{n-1}})$.

One of our motivation is the work of Chen-Tian (\cite{ChenTian1} and
\cite{ChenTian2}), where they studied K\"ahler-Ricci flow with
positive bisectional curvature and proved that the space of all
K\"ahler metrics with positive bisectional curvature on
$\mathbb{CP}^n$ is path-connected. Similarly, we would like to
understand the space of all K\"ahler metrics with $H>0$ on any
Hirzebruch manifold $M_{n, k}=\mathbb{P}(H^{k}\oplus
1_{\mathbb{CP}^{n-1}})$.

\begin{question}\label{space of H}
Given any Hirzebruch manifold $M_{n, k}=\mathbb{P}(H^{k}\oplus
1_{\mathbb{CP}^{n-1}})$, what can we say about the space of all
K\"ahler metrics with $H>0$? Is it path-connected?
\end{question}

As a first step to answer Question \ref{space of H}, we prove the
following result.

\begin{theorem}\label{main in intro}
Given any Hirzebruch manifold $M_{n,k}$, there exists a K\"ahler
metric of $H>0$ in each of its K\"ahler classes.
\end{theorem}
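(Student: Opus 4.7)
The plan is to generalize Hitchin's construction by writing down $U(n)$-invariant K\"ahler metrics on $M_{n,k}=\mathbb{P}(H^{k}\oplus 1_{\mathbb{CP}^{n-1}})$ via the Calabi-type momentum construction (as formalized by Hwang-Singer and Simanca), and then to choose the one-variable profile so that $H>0$ everywhere and the resulting cohomology class is a prescribed one. Since $\dim_{\mathbb R}H^{2}(M_{n,k};\mathbb R)=2$, every K\"ahler class is specified by two real parameters, and the Calabi ansatz on the fiber moment interval $[a,b]$ carries exactly the two free parameters $(a,b)$; I would first check that this identification sweeps out the whole K\"ahler cone.

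Concretely, writing $\pi:M_{n,k}\to\mathbb{CP}^{n-1}$ for the bundle projection and $\omega_{FS}$ for the Fubini-Study form, I would describe each $U(n)$-invariant K\"ahler metric by a positive profile $\phi\in C^{\infty}([a,b])$ with endpoint data $\phi(a)=\phi(b)=0$ and $|\phi'(a)|=|\phi'(b)|=1$ (the Hwang-Singer conditions guaranteeing smooth compactification across the zero and infinity sections). Any $J$-invariant $2$-plane at a point of $M_{n,k}$ then falls into one of three types: (i) the tangent plane to the $\mathbb{CP}^{1}$-fiber, (ii) a horizontal plane lifted from $\mathbb{CP}^{n-1}$, or (iii) a mixed plane $\operatorname{Span}\{X+V,J(X+V)\}$ with $X$ horizontal and $V$ vertical. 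On each type, $H$ reduces to an explicit expression in $\phi,\phi',\phi''$, and $\tau$: positivity on (i) is equivalent to strict concavity $\phi''<0$; positivity on (ii) follows from the Fubini-Study curvature of the base modulo a twist correction; and positivity on (iii), after optimization over the ratio $\|X\|/\|V\|$, becomes a sharp Cauchy-Schwarz-type inequality coupling $\phi,\phi',\phi''$ at every interior $\tau$.

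The remaining task is to produce, for each allowed pair $(a,b)$, a profile $\phi$ on $[a,b]$ satisfying both the endpoint data and the Cauchy-Schwarz inequality from case (iii). I would build $\phi$ explicitly as a rescaling and perturbation of Hitchin's original test profile on the Hirzebruch surface, which already handles case (i) and most of case (ii). The main obstacle I foresee is precisely case (iii) near the zero section $\{\tau=a\}$: there the twist $H^{k}$ drives the horizontal curvature downward and forces $\phi$ to be strongly concave, while the endpoint condition $|\phi'(a)|=1$ already pins down the boundary slope, so the room for adjustment is narrow. Balancing these competing constraints across all pairs $(a,b)$ in the K\"ahler cone is where the real technical difficulty lies, and I expect it to occupy the bulk of the proof.
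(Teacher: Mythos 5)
Your framework is the same as the paper's: both use the Calabi/Koiso--Sakane momentum ansatz for $U(n)$-invariant metrics, reduce $H>0$ to conditions on three curvature components (fiber direction $A=-\tfrac12\phi''$, base direction $C$, and the mixed term $B$, with the mixed planes giving the discriminant condition $2B>-\sqrt{AC}$ after optimizing over the vertical/horizontal ratio), and both observe that the moment interval together with an overall scale sweeps out the whole K\"ahler cone. Up to that point your outline is faithful to the actual proof.

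However, there is a genuine gap at exactly the step you defer: the construction of the profile $\phi$ for \emph{every} admissible interval. Your plan to take ``a rescaling and perturbation of Hitchin's original test profile'' cannot work. Writing the moment interval as $[-c,c]$ with $c\in(0,n/k)$ parametrizing the ray of the K\"ahler class, the condition $2B+\sqrt{AC}>0$ at the zero section $U=-c$ is equivalent to $\phi''(-c)<-\frac{4k^2}{n-kc}$. For Hitchin's quadratic $\phi_c(x)=c-x^2/c$ one has $\phi''\equiv -2/c$, so this fails as soon as $c\geq \frac{n}{k(2k+1)}$ --- a definite, non-perturbative failure on a large portion of the K\"ahler cone (e.g.\ the anti-canonical class of $M_{2,1}$ corresponds to $c=1$, while the quadratic only works for $c<2/3$). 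No small perturbation of the quadratic repairs this, because near the endpoint the boundary data $\phi(\pm c)=0$, $|\phi'(\pm c)|$ fixed forces a competition between the required large concavity at the endpoints and positivity of $\phi$ in the interior. The paper resolves this with a qualitatively different ansatz: even polynomials $\phi(x)=\mu-\alpha_2x^2-\alpha_{2p-2}x^{2p-2}-\alpha_{2p}x^{2p}$ of degree $2p$ with $\mu$ close to $c/p$, which are nearly flat (and small) in the interior and bend sharply only near $\pm c$; the degree $p$ must be taken large depending on $(n,k,c)$, and verifying $2B+\sqrt{AC}>0$ on all of $[-c,c]$ (not just at the endpoints) requires the monotonicity and root-location estimates of Lemma 3.20 in the paper. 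That construction and those estimates are the actual content of the theorem, and they are absent from your proposal.
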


Let us explain the background of Theorem \ref{main in intro} and its
relation to Hitchin's examples in \cite{Hitchin}. Recall any
Hirzebruch surface $M_{2, k}=\mathbb{P}(H^{k}\oplus
1_{\mathbb{CP}^{1}})$ can be obtained from $\mathbb{CP}^2$ by
blowing up and down, i.e. is a rational surface. It is the
projective bundle associated to the rank-$2$ vector bundle
$H^{k}\oplus 1_{\mathbb{CP}^{1}}$, $H$ being the hyperplane bundle
over $\mathbb{CP}^{1}$ and $1_{\mathbb{CP}^{1}}$ the trivial bundle.
We call a surface minimal if it has no rational curve of
self-intersection $-1$, then the only minimal rational surface are
$\mathbb{CP}^2$, $\mathbb{CP}^1 \times \mathbb{CP}^1$, and $M_{2,
k}$ with $k>1$. In other words, any rational surface is the blow up
of $\mathbb{CP}^2$, $\mathbb{CP}^1 \times \mathbb{CP}^1$, and $M_{2,
k}$ with $k>1$. We refer the reader to Chapter 4 of Griffiths-Harris
\cite{GH} for a detailed description of rational surfaces.

Now we focus on the K\"ahler metrics on Hirzebruch manifolds $M_{n,
k}$ where $\pi: M_{n,k} \rightarrow \mathbb{CP}^{n-1}$ for $n \geq
2$. Note that $M_{n,k}$ can also be described as
$\mathbb{P}(H^{-k}\oplus 1_{\mathbb{CP}^{n-1}})$, the projective
bundle associated to $H^{-k}\oplus 1_{\mathbb{CP}^{1}}$. Let $E_0$
denote the divisor in $M_{n,k}$ corresponding to the section $(0,1)$
of $H^{-k}\oplus 1_{\mathbb{CP}^{n-1}}$, $E_{\infty}$ the divisor in
$M_{n,k}$ corresponding to the section $(0,1)$ of $H^{k}\oplus
1_{\mathbb{CP}^{n-1}}$, and $F$ the divisor corresponding to the
pull-back line bundle $\pi^{\ast} H$ over $M_{n.k}$ \footnote{Note
that we follow the notations of $E_0$ and $E_{\infty}$ as in Calabi
\cite{Calabi2}, which is different with those in \cite{GH}.}. Then
the Picard group of $M_{n,k}$ is generated by the divisors $E_0$ and
$F$, while $E_{\infty}=E_0+kF$. The integral cohomology ring of
$M_{n,k}$ is
\[
{\mathbb Z}[F,E_0] / \langle F^n, E_0^2+kE_0F \rangle. \] The
anti-canonical class of $M_{n,k}$ can be expressed as
\[
K^{-1}_{M_{n,k}}=2 E_{\infty}-(k-n)F=\frac{n+k}{k}
E_{\infty}-\frac{n-k}{k} E_{0},
\] and every class $\alpha$ in the K\"ahler cone of $M_{n,k}$
can be expressed as
\begin{equation}
\alpha=\frac{b}{k} [E_{\infty}]-\frac{a}{k} [E_0].  \label{kahler
cone}
\end{equation} for any $b>a>0$.

In \cite{Hitchin} Hitchin proved that each $M_{2,k}$ admits
K\"ahler metrics with $H>0$. His example was motivated by a natural choice of
K\"ahler metric on any projective vector bundles over K\"ahler
manifold. Namely let $\pi: (E, h) \rightarrow (M,g)$ be any
Hermitian vector bundle over a compact K\"ahler manifold. The Chern
curvature form $\Theta (\mathcal{O}_{{\mathbb P}(E)} (1))$ of
$\mathcal{O}_{{\mathbb P}(E) (1)}$ over ${\mathbb P}(E)$ has the fiber direction
components given by the Fubini-Study form, hence is positive.
Therefore
\begin{equation}
\tilde{\omega}=\pi^{\ast} \omega_g+ s \sqrt{-1} \Theta
(\mathcal{O}_{{\mathbb P}(E)} (1))  \label{Hitchin intro}
\end{equation}
is a well-defined K\"ahler metric on $P(E)$ when $s>0$ is
sufficiently small. Fix a Hirzebruch surface $M_{2, k}$, one picks
$(E, h)=(H^{k} \oplus 1_{\mathbb{CP}^{1}}, h)$ and $(M, g)$ as
$(\mathbb{CP}^1, g_{FS})$ where $g_{FS}$ is the standard
Fubini-Study metric and $h$ the induced metric. Under this
situation, $\tilde{\omega}$ has a natural $U(2)$ isometric action.
Hitchin showed that $\tilde{\omega}$ has $H>0$ if $0<s(1+ks)^2
<\frac{1}{k(2k-1)}$ by an explicit calculation. It was further
observed by Alvarez-Chaturvedi-Heier \cite{ACH} that Hitchin's
examples satisfy $H>0$ if and only if $s<\frac{1}{k^2}$. We note
that Hitchin's examples can only exist on a proper open set of
K\"ahler cone of $M_{2,k}$, for example, on $M_{2,1}$, by a scaling
his examples lie in K\"ahler class $b[E_{\infty}]-aE_0$ where
$a<b<2a$. Now Theorem \ref{main in intro} implies the existence of
K\"ahler metric of $H>0$ in each of K\"ahler class on any $M_{n, k}$
where $n \geq 2$.

To prove Theorem \ref{main in intro}, we follow Calabi's ansatz
(\cite{Calabi1} and \cite{Calabi2}). The crucial observation
(pointed out in \cite{Calabi2}) that the group of holomorphic
transformations of $M_{n,k}$ contains $U(n)/Z_k$ as its maximal
compact group. Therefore it is natural to study K\"ahler metrics
with $U(n)$-symmetry. Calabi's method has been very fruitful to
produce examples of special K\"ahler metrics in various settings,
including K\"ahler-Einstein metrics, K\"ahler-Ricci solitons,
K\"ahler metrics with constant scalar curvature, extremal K\"ahler
metrics, etc., see for example \cite{KS1986}, \cite{Lebrun},
\cite{Simanca}, \cite{Koiso}, \cite{Apostolov}, \cite{Cao},
\cite{F-I-K}, \cite{HwangSinger}. To be more precise, we follow the
slightly general approach due to Koiso-Sakane \cite{KS1986}. It
turns out that for $M_{n,k}$ this method is equivalent to Calabi's
ansatz.

Let us view $M_{n,k}$ as a compactification of the
$\mathbb{C}^{\ast}$ bundle over $\mathbb{CP}^{n-1}$ obtained by
$H^{-k}\setminus E_0$ where $E_0$ is its zero section. In general,
given any $\mathbb{C}^{\ast}$ bundle $\pi: L^{\ast} \rightarrow M$
obtained by a Hermitian line bundle $(L, h)$, we may consider the
following metric on the total space of $L^{\ast}$:
\begin{equation}
\tilde{g}=\pi^{\ast}g_{t}+dt^{2}+(dt\circ \tilde{J})^{2}, \label{def
of g intro}
\end{equation} where $g_t$ is a continuous family of K\"ahler metrics
on the base $(M, J)$, $t$ is a function which only depends on the
norm of Hermitian metric $h$, and $\tilde{J}$ the complex structure
on the total space of $L$. Koiso-Sakane \cite{KS1986} gave a
sufficient condition to enure the resulting metric $\tilde{g}$ is
K\"ahler and studied the compactification of such metrics. They were
able to give new examples of non-homogeneous K\"ahler-Einstein
metrics.

Now let us focus on $M_{n,k}$, where we will pick $L=H^{-k}$ in the
approach of Koiso-Sakane. After a suitable reparametrization and
some careful analysis, one can show that $\tilde{g}$ in (\ref{def of
g intro}) can be compactified to produce K\"ahler metrics on
$M_{n,k}$ if a generating function $\phi(U)$ of a single variable
$U$ satisfying suitable boundary conditions. For simplicity, let use
still call such a metric $(M_{n,k}, \tilde{g})$. Making full use of
the $U(n)$-isometric action, it can be shown that the curvature
tensors of $(M_{n,k}, \tilde{g})$ are completely determined by three
components, namely

(1) $A$ which is the holomorphic sectional curvature along the fiber
direction $F$,

(2) $B$ which is the bisectional curvature along the
fiber and any direction in the base $E_{0}$,

(3) $C$ which is the holomorphic sectional curvature along $E_0$.

Thus we can reduce the problem of constructing K\"ahler metric with
$H>0$ in the form of $\tilde{g}$ by looking for a suitable
generating function $\phi(U)$ satisfying some differential
inequalities related to $A, B, C$ defined above. In particular, we
show that Hitchin's example is canonical among all K\"ahler metrics with
$H>0$ on $M_{n,k}$ in the following sense:

\begin{proposition} Hitchin's examples can be uniquely characterized
as $U(n)$-invariant K\"ahler metrics on $M_{n,k}$ which have the
constant curvature component $A$.
\end{proposition}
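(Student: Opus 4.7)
The plan is to realize $A$ as an explicit differential expression in the single-variable generating function $\phi(U)$ from the Calabi/Koiso-Sakane ansatz used to describe $U(n)$-invariant K\"ahler metrics on $M_{n,k}$, and then to reduce the proposition to classifying solutions of the ODE $A[\phi] \equiv \mathrm{const}$ subject to the boundary conditions required for smooth compactification at $E_0$ and $E_\infty$. Recall from the construction preceding the proposition that every $U(n)$-invariant K\"ahler metric on $M_{n,k}$ arises from such a $\phi$, and that the curvature component $A$ (holomorphic sectional curvature along the fiber $F$) is, by the $U(n)$-symmetry, a function of $U$ alone.

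For the ``if'' direction, the quickest route is to observe that Hitchin's metric $\tilde\omega=\pi^{\ast}\omega_{FS}+s\sqrt{-1}\Theta(\mathcal{O}_{\mathbb{P}(E)}(1))$ restricts to each $\mathbb{CP}^1$-fiber as $s$ times the Fubini-Study form, since the base contribution $\pi^{\ast}\omega_{FS}$ vanishes on fibers and $\Theta|_{\text{fiber}}$ is precisely the fiber Fubini-Study form. Thus each fiber carries a round Fubini-Study metric of fixed normalization, giving constant holomorphic sectional curvature and constant $A$ on $M_{n,k}$. Equivalently, one rewrites Hitchin's $\tilde\omega$ in Calabi form, extracts an explicit generating function $\phi_{\mathrm{Hit}}(U)$, and verifies $A[\phi_{\mathrm{Hit}}]\equiv\mathrm{const}$ by direct differentiation.

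For the converse, solve $A[\phi]=c$. The formula for $A$ (to be read off from the curvature computation developed earlier in the paper) is a second-order differential expression in $\phi$ that, after the standard Calabi reparametrization, reduces to a first-order equation in the unknown $\phi'$, hence is explicitly integrable; its general solution carries two free constants. Imposing the two Koiso-Sakane boundary conditions at $U=0$ and at the finite endpoint $U=U_\infty$ (smoothness across $E_0$ and $E_\infty$) cuts this down to a two-parameter family, which one matches against Hitchin's two-parameter family, parameterized by the overall K\"ahler class and the Hitchin parameter $s$. The main obstacle I foresee is not the ODE itself but the conversion of conventions among Calabi's $\phi(U)$, the Koiso-Sakane compactification prescription, and Hitchin's projective-bundle construction; once the parameter dictionary is verified to be natural and the admissible ranges agree, uniqueness of the solution to the ODE yields the proposition.
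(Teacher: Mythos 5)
Your outline is essentially the paper's argument. In the Koiso--Sakane/Calabi ansatz the component $A$ is exactly $-\tfrac12\,\phi''(U)$, so $A\equiv\mathrm{const}$ forces $\phi$ to be a quadratic polynomial, and the four compactification conditions $\phi(U_{\min})=\phi(U_{\max})=0$, $\phi'(U_{\min})=2$, $\phi'(U_{\max})=-2$ (note: four conditions, two at each endpoint, not two in total) pin the quadratic down to $\phi_c(x)=c-x^2/c$ up to translation and scaling, which is precisely the Calabi form of Hitchin's family $\phi_s(U)=-\tfrac{2}{ns}U^2+2U$; this is exactly how the paper proceeds, so your converse direction is fine.

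One caveat on your ``quickest route'' for the forward direction: from the fact that $\tilde\omega$ restricts to each fiber as a multiple of the Fubini--Study form you may only conclude that the \emph{intrinsic} curvature of the fiber is constant, whereas $A=\tilde R_{0\bar00\bar00}$ is the \emph{ambient} holomorphic sectional curvature in the fiber direction; by the Gauss equation these differ by the squared norm of the second fundamental form of the fiber. The inference is valid here only because the fibers are totally geodesic (they are fixed-point sets of an isometric subgroup of $U(n)$, or equivalently one checks that the intrinsic fiber curvature computed from $\tilde g_{0\bar0}=2\phi$ equals $-\tfrac12\phi''=A$), but as written this step is a non sequitur. Your fallback --- computing $\phi_{\mathrm{Hit}}$ explicitly and observing it is quadratic --- closes this gap and is what the paper actually does.
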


In the level of the generating function $\phi(U)$ in the setting of
Koiso-Sakane, each of Hitchin's example corresponds to some
quadratic even function defined on $[-c, c]$ with
$0<c<\frac{n}{k(2k+1)}$. Indeed, the boundary conditions of the
generating function $\phi(U)$ where $U \in [-c, c]$ reflects the
K\"ahler class of the resulting metric $\tilde{g}$, the volume of
the zero section $E_0$ is $(1-\frac{k}{n} c) V_{FS}$, where $V_{FS}$
denote the volume of $\mathbb{CP}^{n-1}$ endowed with
$Ric(g_{FS})=g_{FS}$, and the volume of the infinity section
$E_{\infty}$ is $(1+\frac{k}{n} c) V_{FS}$. To produce examples in
each of the K\"ahler class of $M_{n,k}$ is equivalent to produce
examples of $\phi(U)$ for any $c \in (0, \frac{n}{k})$ and yet
satisfying inequalities related to $H>0$. We are able to construct
such $\phi(U)$ by establishing some delicate estimates on even
polynomials with large degrees.

Recently, Alvarez-Chaturvedi-Heier \cite{ACH} studied the pinching
constants of Hitchin's examples on $M_{2,k}$.

\begin{theorem} [Alvarez-Chaturvedi-Heier \cite{ACH}] \label{ACH intro}
The local and global pinching constant of holomorphic sectional
curvature are the same for any of the Hitchin's examples on
$M_{2,k}$. The maximum among them is $\frac{1}{(2k+1)^2}$ and the
ray of the corresponding K\"ahler classes is $b[E_{\infty}]-aE_0$ of
the slope $\frac{b}{a}=\frac{2k+2}{2k+1}$.
\end{theorem}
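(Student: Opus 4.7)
Using the preceding Proposition, for Hitchin's metrics on $M_{2,k}$ the curvature component $A$ is constant across $M_{2,k}$, while $B(u)$ and $C(u)$ vary with the radial coordinate $u \in [-c,c]$. The $U(2)$-invariance of the metric lets us parametrize unit complex tangent vectors at a point over $u$ as $X = \alpha\, e_f + \beta\, e_b$ with $|\alpha|^{2} + |\beta|^{2} = 1$, and the resulting diagonal structure of the curvature tensor in this basis gives the closed form
\[
H(t,u) \;=\; A\,t^{2} \,+\, 4B(u)\,t(1-t) \,+\, C(u)(1-t)^{2}, \qquad t = |\alpha|^{2} \in [0,1],
\]
a quadratic in $t$ with endpoint values $H(1,u) = A$ and $H(0,u) = C(u)$. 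The first step is to extract explicit rational expressions for $A$, $B(u)$, $C(u)$ from the Koiso--Sakane curvature formulas applied to the even quadratic generating function $\phi_c(U)$ characterizing Hitchin's family, parameterized by $c \in \bigl(0,\tfrac{2}{k(2k+1)}\bigr)$.

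The central structural claim is that for every $u \in [-c,c]$ the quadratic $H(\cdot, u)$ attains its maximum on $[0,1]$ at $t=1$ and its minimum at $t=0$, so that $\max_{t} H(t,u) = A$ and $\min_{t} H(t,u) = C(u)$. To verify this I would check that the interior critical point $t^{*}(u) = (C - 2B)/(A + C - 4B)$ either falls outside $[0,1]$ or yields a value intermediate between $A$ and $C(u)$; this reduces to a sign analysis of $A - 2B(u)$ and $C(u) - 2B(u)$ on $[-c,c]$ using the rational expressions from the previous step. Granting this, the local pinching at every $u$ is simply $C(u)/A$, and consequently
\[
\inf_{u \in [-c,c]} \frac{\min_{t} H(t,u)}{\max_{t} H(t,u)} \;=\; \frac{\min_{u} C(u)}{A} \;=\; \frac{\min_{u,t} H}{\max_{u,t} H},
\]
which is exactly the equality of the local and global pinching constants.

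Finally, I would optimize the common pinching $\min_{u} C(u)/A$ as a function of $c$, a single-variable calculus problem. The optimizer should be $c_{\star} = \tfrac{2}{k(4k+3)}$, at which the pinching attains $\tfrac{1}{(2k+1)^{2}}$; the area identities $a = (1 - \tfrac{k}{2}c)V_{FS}$ and $b = (1 + \tfrac{k}{2}c)V_{FS}$ recorded earlier in the paper then translate $c_{\star}$ into the K\"ahler-class slope $b/a = \tfrac{2k+2}{2k+1}$. The main obstacle is the structural claim above: without uniform control of where $t^{*}(u)$ sits inside $[0,1]$, interior directional critical points could beat one of the endpoint values of $H(\cdot, u)$ and introduce additional extrema, which would break the clean identification of local with global pinching and turn the final optimization into a more delicate two-variable problem.
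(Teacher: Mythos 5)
Your overall framework --- reduce $H$ at a point to the quadratic $H(t,U)=(A+C-4B)t^2+(4B-2C)t+C$ in $t=|\alpha|^2$, exploit the constancy of $A$ to identify local with global pinching, then optimize over $c$ --- is the same as the paper's, and your endpoint data (the range $c\in(0,\tfrac{2}{k(2k+1)})$, the optimizer $c_\star=\tfrac{2}{k(4k+3)}$, the slope $\tfrac{2k+2}{2k+1}$) all match. But your central structural claim is false, and the argument does not go through as written. For Hitchin's metrics the mixed component $B$ is \emph{negative} near the zero section: from the paper's formulas for $M_{n,k}$, $B(-c)=\tfrac{k(ck-n)}{(n-kc)^2}<0$ because $c<n/k$. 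Since $A+C-4B>0$ the parabola opens upward, and since $B<0<A,C$ its vertex $t^*=\tfrac{C-2B}{A+C-4B}$ satisfies $0<t^*<1$ (the numerator and the gap $A-2B$ are both positive). Hence the direction-wise minimum is the \emph{interior} critical value $\tfrac{AC-4B^2}{A+C-4B}$, strictly below both endpoint values $A$ and $C$. Concretely, for $k=1$, $c=2/7$, at $U=-c$ one gets $A=\tfrac72$, $B=-\tfrac{7}{12}$, $C=\tfrac76$, $t^*=\tfrac13$, and $\min_tH=\tfrac{7}{18}<C=\tfrac76$; the local pinching is $\tfrac{7/18}{7/2}=\tfrac19$, whereas $C/A=\tfrac13$. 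So the local pinching function is $\tfrac{AC-4B^2}{A(A+C-4B)}$, not $C(U)/A$, and optimizing $\min_UC(U)/A$ over $c$ yields the wrong answer: for $k=1$ one has $C(-c)/A=\tfrac{2c}{2-c}$, which increases monotonically toward $1$ as $c\to\tfrac23$ and would not single out $c_\star=\tfrac27$.

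What does survive is the equality of local and global pinching: because the parabola opens upward, the direction-wise maximum is $\max(A,C(U))$, and one checks $A\ge C(U)$ throughout for Hitchin's metrics, so the maximum is the global constant $A$ and the global pinching constant coincides with $\inf_U$ of the local one. The paper's route is then to show that $\tfrac{AC-4B^2}{A(A+C-4B)}$ is increasing in $U$, so its infimum is attained at the zero section $U=-c$ and equals $\tfrac{2c\,(n-c(2k^2+k))}{(n-ck)\,((3k+2)c+n)}$ (with $n=2$ here); maximizing this rational function of $c$ gives $c_\star=\tfrac{n}{4k^2+3k}$ and the dimension-free value $\tfrac{1}{(2k+1)^2}$, after which your translation into the K\"ahler class via the volumes $(1\mp\tfrac{k}{n}c)V_{FS}$ is correct. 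You would need to replace your third step with this computation.
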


Recall that the local holomorphic pinching constant is the maximum
of all $\lambda \in (0,1]$ such that $0<\lambda H(\pi^{,}) \leq
H(\pi) $ for any $J-$invariant real $2-$planes $\pi, \pi^{,} \subset
T_p(M)$ at any $p \in M$, while the global holomorphic pinching
constant is the maximum of all $\lambda \in (0,1]$ such that there
exists a positive constant $C$ so that $\lambda C \leq H(p,\pi) \leq
C$ holds for any $p \in M$ and any $J$-invariant real 2-plane $\pi
\subset T_p(M)$. Obviously the global holomorphic pinching constant
is no larger than the local one. We show that the conclusion of
Theorem \ref{ACH intro} is not always true for other K\"ahler
metrics with $U(n)$-symmetry and with $H>0$, which again reflects
the specialness of Hitchin's examples.

\begin{proposition}
There exist K\"ahler metrics with $H>0$ on $M_{n,k}$ whose local and
global pinching constants for holomorphic sectional
curvature are not equal.

In general, the local holomorphic pinching constant of any
$U(n)$-invariant K\"ahler metric on $M_{n,k}$ is bounded from above
by $\frac{1}{k^2}$.
\end{proposition}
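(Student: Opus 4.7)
The plan is to exploit the explicit description of the holomorphic sectional curvature of a $U(n)$-invariant K\"ahler metric $\tilde{g}$ on $M_{n,k}$ through the three quantities $A(U)$, $B(U)$, $C(U)$ introduced above. By the $U(n)$-symmetry, at every point with parameter $U\in[-c,c]$ each unit tangent vector is, up to the isotropy action of $U(n-1)$, of the form $v=\cos\theta\,f+\sin\theta\,e$, where $f$ is a unit vector along the fiber direction and $e$ is a unit vector along a base direction in $\mathbb{CP}^{n-1}$. Expanding $R(v,Jv,Jv,v)$ and using $U(n-1)$-invariance together with the K\"ahler identities to kill cross terms shows that the holomorphic sectional curvature depends only on $U$ and $\theta$, with
\[
H_U(\theta) \;=\; A(U)\cos^4\theta + C(U)\sin^4\theta + \beta\,B(U)\cos^2\theta\sin^2\theta
\]
for a fixed positive coefficient $\beta$. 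Consequently the local pinching constant equals $\inf_{U\in[-c,c]}\bigl(\min_\theta H_U(\theta)/\max_\theta H_U(\theta)\bigr)$, while the global pinching constant equals $\inf_{U,\theta}H_U(\theta)/\sup_{U,\theta}H_U(\theta)$.

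For the upper bound $1/k^2$ on the local pinching constant, I would compare $A$ and $C$ on the zero section $E_0$, i.e.\ at $U=-c$. The smoothness conditions on the generating function $\phi$ at $U=-c$ are forced by the requirement that $\tilde{g}$ extend smoothly across $E_0$, and since the normal bundle of $E_0$ in $M_{n,k}$ is $H^{-k}$, these boundary conditions carry an explicit factor of $k$; substituting them into the Koiso-Sakane formulas for $A(-c)$ and $C(-c)$ I expect a ratio relation of the form $A(-c)=k^2\,C(-c)$. Comparing the $J$-invariant $2$-planes $\theta=0$ and $\theta=\pi/2$ at this single point then yields
\[
\frac{\min_\theta H_{-c}(\theta)}{\max_\theta H_{-c}(\theta)} \;\le\; \frac{\min\{A(-c),C(-c)\}}{\max\{A(-c),C(-c)\}} \;=\; \frac{1}{k^2},
\]
and the bound on the local pinching constant follows. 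The main technical step is the boundary identification, which is routine once the Koiso-Sakane formulas for $(M,L)=(\mathbb{CP}^{n-1},H^{-k})$ are expanded near $U=-c$; notably, this step does not require $H>0$, so the bound applies to every $U(n)$-invariant K\"ahler metric on $M_{n,k}$.

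For the existence of a K\"ahler metric with $H>0$ whose local and global pinching constants differ, I would perturb one of Hitchin's examples. The preceding proposition identifies Hitchin's metrics as the $U(n)$-invariant K\"ahler metrics on $M_{n,k}$ with $A(U)$ constant, and it is precisely this constancy of $A$ that forces the pointwise extrema of $H_U(\theta)$ to be attained at the same $U$-value and hence the two pinching constants to coincide. Starting from a Hitchin metric sitting strictly inside the open region $\{H>0\}$ (say with $s<1/k^2$ in the notation of the introduction), I would deform $\phi$ to a nearby generating function $\phi_\varepsilon$ preserving the boundary conditions at $U=\pm c$ but with $A_\varepsilon(U)$ genuinely nonconstant, for instance by adding a small even polynomial vanishing to sufficient order at $U=\pm c$. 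Continuity of $A,B,C$ as functions of $\phi$ preserves $H>0$ for $\varepsilon$ small by compactness, while nonconstancy of $A_\varepsilon$ pushes the global maximum and global minimum of $H_U(\theta)$ to different $U$-values, so the global pinching constant drops strictly below the local one. The main obstacle is to verify that the perturbation genuinely separates the two extrema rather than shifting them in tandem; I would address this by computing $\partial_U H_U(\theta)$ at the extremizing $(U,\theta)$ of the unperturbed Hitchin metric and choosing the sign of the perturbation so that its first-order variation moves the global maximum and global minimum along $U$ in opposite directions.
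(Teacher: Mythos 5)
Your argument for the upper bound $\tfrac{1}{k^2}$ rests on a claimed identity $A(-c)=k^2\,C(-c)$ forced by the boundary conditions at the zero section, and this identity is false. The smooth-compactification conditions at $U=U_{\min}$ are $\phi(U_{\min})=0$ and $\phi'(U_{\min})=2$; plugging these into the Koiso--Sakane formulas determines $B(U_{\min})=-\tfrac{k}{n+kU_{\min}}$ and $C(U_{\min})=\tfrac{2}{n+kU_{\min}}$, i.e.\ $2B=-kC$ there, but they say nothing about $A(U_{\min})=-\tfrac12\phi''(U_{\min})$, which is a free parameter of the metric. The correct mechanism is different and, contrary to your remark, it \emph{does} use the positivity of $H$: along the zero section the holomorphic sectional curvature in the direction $x_0e_0+x_ie_i$ is the quadratic $(A+C-4B)t^2+(4B-2C)t+C$ in $t=|x_0|^2$, whose minimum over $[0,1]$ is $\tfrac{AC-4B^2}{A+C-4B}$ (interior, since $B<0<A$); requiring this to be positive forces $AC\ge 4B^2=k^2C^2$, i.e.\ $A\ge k^2C$ at $U_{\min}$, and then the pinching ratio at that single point is already at most $H(e_i)/H(e_0)=C/A\le 1/k^2$. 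Without invoking $H>0$ (or at least $H\ge 0$) in the mixed fiber--base directions there is no constraint linking $A$ and $C$, and no bound.

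For the first assertion your route (perturbing a Hitchin metric to destroy the constancy of $A$) differs from the paper's, which instead exhibits an explicit quartic family $\phi_{c,\mu}(U)=\mu-(\tfrac{1}{c^3}-\tfrac{\mu}{c^4})U^4-(\tfrac{2\mu}{c^2}-\tfrac1c)U^2$ on $M_{2,1}$, proves $H>0$ for $\mu$ slightly above $c/2$, and then computes both constants in closed form for $c=1$: the local pinching constant is $\tfrac{4(2\mu-1)}{4-\mu}$ (the global maximum of $H$ is $A(-1)=5-4\mu$ at the zero section while the global minimum is $A(0)=2\mu-1$ at the midpoint), versus the global constant $\tfrac{2\mu-1}{5-4\mu}$, and these visibly disagree. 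Your heuristic for why constancy of $A$ forces local $=$ global is sound (since $\max_\theta H_U=A$ is then independent of $U$, the $U$ minimizing the pointwise ratio also realizes both global extrema), but the existence half of your proposal remains a sketch: you do not produce the perturbation, verify that $H>0$ survives it, or carry out the first-order variation showing the global extrema genuinely migrate to distinct $U$-values so that the global constant drops strictly below the local one. As written, the second statement's proof is wrong and the first statement's proof is incomplete.
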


A direct calculation enables us to generalize the result of
Alvarez-Chaturvedi-Heier \cite{ACH} to $M_{n, k}$. It is interesting
to see that the optimal pinching constant is dimension free. It is
the same constant $\frac{1}{(2k+1)^2}$ discovered in \cite{ACH},
with the corresponding K\"ahler class on $M_{n,k}$ satisfies
\[
b[E_{\infty}]-a[E_0] \ \ \text{where}\ \  b=\frac{2k+2}{2k+1}a>0.
\]
It is unclear to us how to determine the optimal holomorphic
pinching constant among all $U(n)$-invariant K\"ahler metrics on
$M_{n,k}$, though we believe it is achieved among Hitchin's
examples. Motivated by the result of Alvarez-Chaturvedi-Heier
\cite{ACH}, we would like to propose the following question:

\begin{question} \label{pinching intro}
Is the following statement true? If a compact K\"ahler surface with
$H>0$ has its local pinching constant $\lambda > \frac{1}{9}$, then
it must be biholomorphic to $\mathbb{CP}^{2}$ or $\mathbb{CP}^{1}
\times \mathbb{CP}^{1}$.
\end{question}

As a partial evidence on Question \ref{pinching intro}, by using
some previous results on positive orthogonal bisectional curvature,
we give a complete classification of compact K\"ahler manifolds with
local holomorphic pinching constant $\lambda \geq \frac{1}{2}$. In
the case of K\"ahler surfaces, they are biholomorphic to
$\mathbb{P}^{2}$ or $\mathbb{P}^{1} \times \mathbb{P}^{1}$.

It is of course desirable to make further studies on Question
\ref{space of H}. We are able to show that the proof of Theorem
\ref{main in intro} can be used to establish the path-connectedness
of all $U(n)$-invariant K\"ahler metrics of $H>0$ on any Hirzebruch
manifold $M_{n,k}$.

\begin{corollary}\label{U(n) path connected}
The space of all $U(n)$-invariant K\"ahler metrics of $H>0$ on
$M_{n,k}$ is path-connected.
\end{corollary}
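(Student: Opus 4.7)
The plan is to leverage the Calabi/Koiso-Sakane parametrization developed in the proof of Theorem \ref{main in intro}. Every $U(n)$-invariant K\"ahler metric on $M_{n,k}$ is encoded by a smooth generating function $\phi:[-c,c]\to\R$ for some $c\in(0,n/k)$ satisfying the Koiso-Sakane boundary conditions at $\pm c$ (which guarantee smooth compactification across $E_0$ and $E_{\infty}$) and with $\phi>0$ on the interior; the parameter $c$ fixes the K\"ahler class, while $H>0$ is equivalent to pointwise inequalities $A(\phi),B(\phi),C(\phi)>0$ in the three curvature components identified in the paper. The corollary therefore reduces to showing that
\[
\mathcal{H}=\bigsqcup_{c\in(0,n/k)}\mathcal{H}_c,\qquad \mathcal{H}_c=\{\phi:[-c,c]\to\R\ \text{admissible, with }A,B,C>0\},
\]
is path-connected.

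The strategy splits into an inter-class and an intra-class step. For the inter-class step, the construction in Theorem \ref{main in intro} produces a distinguished $\phi_c^{\ast}\in\mathcal{H}_c$ (an even polynomial of sufficiently large degree, suitably perturbed) depending smoothly on $c\in(0,n/k)$, so the smooth curve $\{\phi_c^{\ast}\}_{c\in(0,n/k)}$ already meets every K\"ahler class. For the intra-class step it then suffices to connect an arbitrary $\phi\in\mathcal{H}_c$ to the basepoint $\phi_c^{\ast}$ within $\mathcal{H}_c$, and here I would try the straight-line homotopy
\[
\phi_s=(1-s)\phi+s\phi_c^{\ast},\qquad s\in[0,1].
\]
The Koiso-Sakane boundary conditions at $\pm c$ are linear in $\phi$ and hence preserved, and $\phi_s>0$ on $(-c,c)$ follows by convexity of positivity.

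The hard part will be verifying that $A(\phi_s),B(\phi_s),C(\phi_s)>0$ holds throughout the homotopy, since $A,B,C$ are rational rather than linear in $\phi$ and its derivatives, and $\mathcal{H}_c$ need not be convex a priori. My plan is to exploit the quantitative positivity margin $A,B,C\geq\varepsilon(c)>0$ built into $\phi_c^{\ast}$ by the construction of Theorem \ref{main in intro}: one bounds the $s$-derivative of each of $A(\phi_s),B(\phi_s),C(\phi_s)$ in terms of $\phi-\phi_c^{\ast}$ and the margin $\varepsilon(c)$, and if this direct estimate fails on some subinterval of $[-c,c]$, one first enlarges the margin of $\phi_c^{\ast}$ there by adding a small compactly-supported bump, runs the straight-line homotopy, and then removes the bump in reverse. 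Once this margin-preservation estimate is in hand, assembling the full path from an arbitrary metric in $\mathcal{H}_c$ to any other metric in $\mathcal{H}_{c'}$ via $\phi_c^{\ast}$ and the smooth reference curve $\{\phi_c^{\ast}\}$ is routine.
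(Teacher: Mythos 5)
Your two-step skeleton (a continuous reference family $\{\phi_c^{\ast}\}$ meeting every K\"ahler class, plus a straight-line homotopy within each fixed class) is exactly the paper's, but your intra-class step contains the one genuine gap. You assert that $A,B,C$ are ``rational rather than linear in $\phi$'' and that $\mathcal{H}_c$ ``need not be convex a priori,'' and you then substitute a margin-preservation estimate with compactly supported bumps. That substitute is not a proof: adding a bump to $\phi_c^{\ast}$ perturbs $A=-\tfrac12\phi''$ by the bump's second derivative, which is large and of both signs, so the enlarged reference function need not remain admissible, and the claimed bound on the $s$-derivatives ``in terms of $\phi-\phi_c^{\ast}$ and the margin'' is never made precise. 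The point you are missing is that $\mathcal{H}_c$ \emph{is} convex --- this is the paper's Corollary \ref{convexity}. Indeed, by Proposition \ref{curvature tensors}, for each fixed $U$ the quantities $A=-\tfrac12\phi''$ and $B=\frac{k^2\phi-k(n+kU)\phi'}{2(n+kU)^2}$ are linear in $\phi$ and $C=\frac{2(n+kU)-k^2\phi}{(n+kU)^2}$ is affine in $\phi$ (the denominators depend on $U$ only, not on $\phi$). Moreover the correct positivity condition is not $A,B,C>0$ as you state, but $A>0$, $C>0$, $2B>-\sqrt{AC}$; since $(A,C)\mapsto\sqrt{AC}$ is concave on the positive quadrant, $2B+\sqrt{AC}$ is a concave functional of $\phi$, so its positive superlevel set intersected with $\{A>0\}\cap\{C>0\}$ is convex, and the boundary conditions $\phi(\pm c)=0$, $\phi'(\mp c)=\pm 2$ are affine and hence also preserved under convex combination. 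With this observation your straight-line homotopy works verbatim and the bump construction is unnecessary. Your inter-class step is essentially correct, provided you note, as the paper does, that the degree $p$ in Proposition \ref{any class} can be chosen uniformly for $c$ in a compact subinterval $[c_1,c_2]\subset(0,\frac{n}{k})$ (it cannot be chosen uniformly on all of $(0,\frac{n}{k})$), which suffices because any two classes lie in some such subinterval.
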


At this moment, it is not clear to use how to prove the
path-connectedness without the assumption of $U(n)$-symmetry. In the
meantime it seems impossible to make use of the path constructed in
Corollary \ref{U(n) path connected} and improve the holomorphic
pinching constants. In the other direction, as illustrated in the
work of Chen-Tian \cite{ChenTian1} and \cite{ChenTian2}, one may
wonder if the K\"ahler-Ricci flow can be used to study the space of
all K\"ahler metric of $H>0$ and Question \ref{pinching intro}. To
that end, we calculate the holomorphic sectional curvature for the
K\"ahler-Ricci shrinking soliton on Fano Hirzebuch manifold
$M_{n,k}$ with $n>k$ due to Koiso \cite{Koiso} and Cao \cite{Cao},
and also for the noncompact ones on the total space of $H^{-k}
\rightarrow \mathbb{CP}^{n-1}$ with $n>k$ due to
Feldman-Ilmanen-Knopf \cite{F-I-K}.

\begin{proposition} The Cao-Koiso shrinking soliton on Hirzebruch manifold
$M_{n,k}$ have $H>0$ as the ratio $\frac{n}{k}$ is sufficiently
large. If we fix $k=1$, the first example with $H>0$ is on
$M_{3,1}$; if we fix $k=2$, the first one is on $M_{7,2}$.

In the complete noncompact case, the Feldman-Ilmanen-Knopf shrinking
solitons do not have $H>0$ if $k<n \leq k^2+2k$.
\end{proposition}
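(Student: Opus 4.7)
The plan is to push the $U(n)$-invariant Calabi/Koiso--Sakane description through to the specific generating functions coming from the Cao--Koiso and Feldman--Ilmanen--Knopf solitons, and then to test positivity of $H$ using the three curvature components $A$, $B$, $C$ identified earlier. Recall that a $U(n)$-invariant K\"ahler metric on $M_{n,k}$ corresponds to a generating function $\phi(U)$ on an interval $[-c,c]$ with $c\in(0,n/k)$ satisfying prescribed smoothness and boundary conditions, and that $H>0$ is equivalent to $A>0$, $C>0$ and $|B|<\sqrt{AC}$ on $[-c,c]$ (together with the appropriate limits at the endpoints).

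First I would write the Cao--Koiso soliton in these coordinates. The shrinking soliton equation under $U(n)$-symmetry reduces to a first-order ODE in $U$ for $\phi'$ that integrates explicitly; the compactification conditions at $U=\pm c$ then pin down the solution $\phi_{\mathrm{sol}}$ and the parameter $c=c(n,k)$, the metric existing precisely in the Fano range $n>k$.

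Next I would substitute $\phi_{\mathrm{sol}}$ into the explicit formulas for $A$, $B$, $C$ from the $U(n)$-invariant setup. After the substitution $A$, $B$, $C$ and the discriminant $AC-B^2$ become rational functions of $U$ with coefficients depending on $n$ and $k$. Their boundary values at $U=\pm c$ recover (up to normalization) the curvatures of the base and fiber factors and are automatically positive, so the real issue is the sign on the open interval. I would show that as $n/k\to\infty$ the three rational functions converge uniformly to positive limits, giving $H>0$ for all sufficiently large $n/k$; for the sharp thresholds I would explicitly locate the minima of $A$, $C$ and $AC-B^2$ on $[-c,c]$ and verify that $(n,k)=(3,1)$ and $(7,2)$ are the first pairs at which all three minima are positive, while $(n,k)=(2,1)$ and $(n,2)$ for $3\leq n\leq 6$ admit interior points where positivity fails.

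For the Feldman--Ilmanen--Knopf solitons on the total space of $H^{-k}\to\mathbb{CP}^{n-1}$, the same ansatz applies with $\phi$ now living on a half-line $[c,\infty)$, closed up smoothly at $U=c$ (the zero section $E_0$) and asymptotically conical at infinity. I would integrate the soliton ODE explicitly, plug into the curvature formulas, and analyze the leading behavior of $A$, $B$, $C$ as $U\to\infty$. In the regime $k<n\leq k^2+2k$ a short asymptotic computation should show that one of the positivity conditions (likely $AC-B^2>0$) fails outside a compact set, so $H$ cannot stay positive. The main obstacle in both parts is the same: once the implicit soliton profile is substituted, the three positivity conditions become inequalities among rational functions of $U$ whose critical behavior concentrates near the endpoints of the interval, and the numerical thresholds $n=3$ for $k=1$, $n=7$ for $k=2$, and the cutoff $n\leq k^2+2k$ for FIK emerge precisely from sharp endpoint estimates.
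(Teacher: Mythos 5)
Your general framework (Koiso--Sakane generating function, the three curvature components $A$, $B$, $C$) is the same as the paper's, but the proposal goes wrong at the two points that actually carry the content of the proposition. First, the positivity criterion is misstated: $H>0$ is equivalent to $A>0$, $C>0$ and $2B>-\sqrt{AC}$ (no upper bound on $B$ is needed, and the factor $2$ matters for every numerical threshold in the statement), not $|B|<\sqrt{AC}$. Second, and more seriously, your claim that the boundary values of $A,B,C$ ``are automatically positive, so the real issue is the sign on the open interval'' is backwards. At the zero section $U=U_{\min}$ one has $\phi=0$ and $B(U_{\min})=-k/(n-k)<0$, and the paper's entire argument pivots on the endpoint condition there: $2B+\sqrt{AC}>0$ at $U=U_{\min}$ is equivalent to the explicit inequality $\alpha<\alpha_0(n,k)=\frac{(n-2k)(k+1)}{n-k}$ on the soliton parameter $\alpha$. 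This is what fails for $M_{2,1}$ (where $\alpha_0=0$ but $\alpha\approx 0.53$), what is verified numerically for $M_{3,1}$ and $M_{7,2}$ (after which one checks that $(2B+\sqrt{AC})(n+kU)^2$ is monotone increasing on $[-1,1]$, so the endpoint is the worst point), and what produces the cutoff in the noncompact case. Note also that in the paper's normalization the interval is fixed to be $[-1,1]$ by the compactification conditions; the quantity pinned down by $\phi(1)=0$ is $\alpha$, not the interval length $c$.

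For the Feldman--Ilmanen--Knopf part your strategy of detecting failure from the asymptotics at $U\to\infty$ is unlikely to work: with $\phi\sim 2U/\alpha_*$ the components $A$, $C$ decay to $0^+$ and $B$ does not become negative at infinity, so there is no asymptotic obstruction in the stated range. The paper instead reduces the soliton condition to the unique positive root $\alpha_*$ of an explicit degree-$n$ polynomial $\chi(\alpha)$, proves $\alpha_*>k$ by a rearrangement of $\chi$, and observes that $k\geq\alpha_0(n,k)$ exactly when $n\leq k^2+2k$; hence the necessary endpoint condition $\alpha_*<\alpha_0(n,k)$ at the zero section fails in that range. Without the intermediate device of the inequality $\alpha<\alpha_0(n,k)$ you have no way to extract the thresholds $M_{3,1}$, $M_{7,2}$, and $n\leq k^2+2k$ from an implicitly defined soliton profile, so as written the proposal does not close.
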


In fact we expect that none of Feldman-Ilmanen-Knopf shrinking
solitons will have $H>0$. It would be interesting to know if any
complete K\"ahler-Ricci soliton with $H>0$ must be compact, in view
of the recent work Munteanu-Wang \cite{MW} and the previous work of
Ni \cite{Ni}.

As a corollary to the previous works of Zhu \cite{Zhu},
Weinkove-Song \cite{SW2011}, Fong \cite{Fong}, Guo-Song
\cite{GuoSong} on K\"ahler-Ricci flow on Hirzebruch manifolds with
Calabi's symmetry, we exhibit various pinching behaviors along the
K\"ahler-Ricci flow when the initial metrics are chosen from
examples constructed in Theorem \ref{main in intro}, in particular
we have the following:

\begin{corollary}
$H>0$ is not preserved under the K\"ahler-Ricci flow.
\end{corollary}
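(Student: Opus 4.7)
The plan is to combine the main existence result (Theorem \ref{main in intro}) with convergence of the K\"ahler-Ricci flow on Hirzebruch manifolds and the curvature computation for the Cao-Koiso soliton from the Proposition just stated. The cleanest realization is to work on the Fano Hirzebruch manifold $M_{2,1}$, on which the Cao-Koiso shrinking soliton fails $H>0$ (since the first $H>0$ example for $k=1$ occurs only on $M_{3,1}$).

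By Theorem \ref{main in intro} there exists a $U(2)$-invariant K\"ahler metric $\omega_0$ with $H>0$ representing the anti-canonical class $c_1(M_{2,1})$; indeed the Calabi-ansatz construction of the paper automatically yields $U(n)$-invariant metrics in every K\"ahler class. Take $\omega_0$ as initial data for the normalized K\"ahler-Ricci flow $\partial_t\omega_t=-\operatorname{Ric}(\omega_t)+\omega_t$. The $U(2)$-symmetry is preserved along the flow, and by Zhu's convergence theorem for the normalized K\"ahler-Ricci flow on Fano manifolds that admit a K\"ahler-Ricci soliton (with no quotient by automorphisms needed in the $U(n)$-invariant category), $\omega_t$ converges smoothly to the Cao-Koiso shrinking soliton $\omega_\infty$ on $M_{2,1}$.

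Because $\omega_\infty$ does not have $H>0$, the Calabi-ansatz formulas for the curvature components $A$, $B$, $C$ at the soliton (derived in the earlier sections of the paper) exhibit a $J$-invariant plane $\pi_\infty$ at some point where $H_{\omega_\infty}(\pi_\infty)<0$. By $C^\infty$-convergence the full curvature tensor of $\omega_t$ converges to that of $\omega_\infty$, and joint continuity of $H$ in $(\omega,\pi)$ then produces a large finite $t$ and a plane $\pi_t$ with $H_{\omega_t}(\pi_t)<0$. Since the normalized and unnormalized flows differ only by a time-dependent rescaling, which preserves the sign of holomorphic sectional curvature, this directly contradicts preservation of $H>0$ along the K\"ahler-Ricci flow.

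The main obstacle is to confirm that the curvature failure on the Cao-Koiso soliton is \emph{strict}, namely that at least one value of $H_{\omega_\infty}$ is genuinely negative rather than merely zero; otherwise the argument only gives $H_{\omega_t}\to 0$ along some sequence of planes, which is compatible with $H_{\omega_t}>0$ at every finite $t$. I expect the explicit $U(n)$-invariant curvature calculation for the Cao-Koiso soliton on $M_{2,1}$ to give strict negativity on an open subset, but if this were to fail in some boundary case, a fallback is to use the non-Fano setting: on $M_{2,k}$ with $k\ge 2$, start from an $H>0$ metric supplied by Theorem \ref{main in intro} and invoke the Song-Weinkove, Fong and Guo-Song analysis of the finite-time contraction of $E_0$. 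Near such a singularity the explicit Calabi-symmetric formulas for $A$, $B$, $C$ along the flow force one of them to become non-positive strictly before the singular time, again breaking $H>0$.
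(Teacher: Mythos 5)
Your argument is essentially the paper's own: take the $U(2)$-invariant $H>0$ metric in the anti-canonical class of $M_{2,1}$ supplied by the Calabi-ansatz construction, let the normalized K\"ahler-Ricci flow converge to the Cao-Koiso soliton, and use the fact that this soliton fails $H>0$. The strictness issue you correctly flag is settled by the paper's computation that the soliton parameter $\alpha \simeq 0.5276$ strictly exceeds the threshold $\alpha_0(2,1)=0$, so that $2B+\sqrt{AC}<0$ at $U=-1$ and $H$ is genuinely negative on an open set of the limit, making your fallback unnecessary.
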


The above corollary entails the following question: Can we construct
a suitable one-parameter family of deformation of K\"ahler metrics
$M_{n,k}$ so that the holomorphic pinching constant is monotone
along the deformation?

We would like to point out another generalization of Hitchin's
exmaples. In a very recent work of Alvarez, Heier, and the
second-named author \cite{AHZ}, it was proved that the
projectivization ${\mathbb P}(E)$ of any Hermitian vector bundle $E$
over a compact K\"ahler manifold with $H>0$ also admits a K\"ahler
metric of $H>0$. The resulting metric on ${\mathbb P}(E)$ is of the
form (\ref{Hitchin intro}) for $s$ sufficiently small. Instead of
working with the line bundle $H^{-k}$ on $\mathbb{CP}^{n-1}$, it is
possible to apply the method of Koiso-Sakane developed in the proof
of Theorem \ref{main in intro} to get more examples of K\"ahler
metrics of $H>0$ on some $\mathbb{CP}^{k}$ bundles. For example,
consider $M=\mathbb{CP}^{n_1-1} \times \mathbb{CP}^{n_2}$ and
$L=\pi_1^{\ast} H_1^{-1} \otimes \pi_1^{\ast} H_2^{-k_2}$ where
$H_1$ and $H_2$ are hyperplanes bundles on $\mathbb{CP}^{n_1-1}$ and
$\mathbb{CP}^{n_2}$, $\pi_1$ and $\pi_2$ are projections to its
factors. Then we can produce a $\mathbb{CP}^{n_1}$-bundle over
$\mathbb{CP}^{n_2}$ as a suitable compactification of $L^{\ast}
\rightarrow M$. It is also interesting to study the space of all
K\"ahler metrics of $H>0$ on it.

In complex dimensions higher than one, it is highly desirable to
find examples of compact K\"ahler manifold with $H>0$. Among the
known such examples are all compact Hermitian symmetric spaces and
some K\"ahler $C$-spaces (rational homogeneous space).  In the last
section of the paper, we study the holomorphic pinching constant for
the canonical K\"ahler-Einstein metric on the flag 3-fold, the only
K\"ahler $C$-space which is not Hermitian symmetric in dimension 3.
We also demonstrate a higher dimensional projective manifold such
that some of its classes admit K\"ahler metric with $H>0$ while some
do not. More precisely, we prove:

\begin{theorem} \label{some classes not}
Let $M$ be the hypersurface in $\mathbb{CP}^n \times \mathbb{CP}^n$
defined by $ \sum_{i=1}^{n+1} z_i w_i = 0$ equipped with the
restriction of the product of the Fubini-Study metric, where $z$,
$w$ are the homogeneous coordinates. Then the holomorphic pinching
constant of $M$ is $\frac{1}{4}$.

Consider $N$ which is a smooth bidegree $(p,1)$ hypersurface in
$\mathbb{CP}^r \times \mathbb{CP}^s$ where $r, s \geq 2$, $p \geq
1$, and $p > r+1$, then some K\"ahler classes of $N$ admit K\"ahler
metrics of $H>0$ and some do not.
\end{theorem}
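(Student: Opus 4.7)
The plan is to treat the two assertions of the theorem separately. For the first, I would exploit the fact that $M$ is the flag manifold $\mathrm{Fl}(1,n;n+1)$ parametrising pairs (line, hyperplane) in $\mathbb{C}^{n+1}$ with the line contained in the hyperplane; this is a homogeneous space of $SU(n+1)$, and the $SU(n+1)$-action on $\mathbb{CP}^n\times\mathbb{CP}^n$ (standard on the first factor, contragredient on the second) preserves both $M$ and the product Fubini-Study metric. Thus $H^M$ is determined by its values at any single basepoint, for which I would take $p_0=([1:0:\cdots:0],[0:\cdots:0:1])$. In the affine coordinates $\zeta_i=z_i/z_1$ ($i=2,\ldots,n+1$) and $\omega_j=w_j/w_{n+1}$ ($j=1,\ldots,n$), $p_0$ is the origin, the product Fubini-Study has vanishing Christoffel symbols there, and the defining equation reduces to $F(\zeta,\omega)=\omega_1+\zeta_{n+1}+\sum_{i=2}^{n}\zeta_i\omega_i$; the holomorphic tangent space of $M$ at $p_0$ is cut out by $A+B=0$, where $A,B$ are the coefficients of $\partial_{\zeta_{n+1}},\partial_{\omega_1}$.

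Writing a general vector $X\in T_{p_0}^{1,0}M$ as $X=(A,-A,x,y)$ with $x,y\in\mathbb{C}^{n-1}$ (the remaining $\zeta$- and $\omega$-coefficients) and normalising each Fubini-Study factor to have constant $H=2$, the Gauss equation together with the formula $|h(X,X)|^2=|\partial_X\partial_X F|^2/|\partial F|_g^2$ (valid in Kähler normal coordinates) gives
\[
R^M(X,\bar X,X,\bar X)=2\bigl[(|A|^2+|x|^2)^2+(|A|^2+|y|^2)^2\bigr]-2|x\cdot y|^2,
\]
where $x\cdot y=\sum_{i=2}^n x_iy_i$. Setting $a=|A|^2$, $u=|x|^2$, $v=|y|^2$, $r=|x\cdot y|^2\in[0,uv]$ and normalising $|X|^2=2a+u+v=1$, I would reduce this to $H^M(X)=2\bigl[\,2a^2-2a+1-2uv-r\,\bigr]$ and observe that its extrema on the compact parameter set $a\in[0,\tfrac12]$, $u+v=1-2a$, $0\leq r\leq uv$ are $\max H^M=2$ (at $a=0$, $uv=0$, $r=0$) and $\min H^M=\tfrac12$ (at $a=0$, $u=v=\tfrac12$, $r=uv=\tfrac14$). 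This yields the pinching constant $\tfrac14$.

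For the second assertion, I would exploit the linearity of the defining equation in $w$. Writing it as $\sum_{j=0}^{s}F_j(z)w_j=0$ with $F_j$ homogeneous of degree $p$, the first projection realises $N$ as $\mathbb{P}(E)$ over $\mathbb{CP}^r$, where $E=\ker(\mathcal{O}_{\mathbb{CP}^r}^{s+1}\to\mathcal{O}_{\mathbb{CP}^r}(p))$ is a rank-$s$ Hermitian vector bundle, and under the tautological embedding $\mathcal{O}_N(1)=H_2|_N$. Since $\mathbb{CP}^r$ with Fubini-Study has $H>0$, the theorem of Alvarez-Heier-Zheng \cite{AHZ} recalled in the introduction produces a Kähler metric with $H>0$ on $N$ of the form $\pi_1^{\ast}\omega_{FS}+\varepsilon\sqrt{-1}\Theta(\mathcal{O}_N(1))$ for all sufficiently small $\varepsilon>0$; this metric lies in the class $[H_1|_N]+\varepsilon[H_2|_N]$. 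Hence every Kähler class $\alpha[H_1|_N]+\beta[H_2|_N]$ with $\beta/\alpha$ sufficiently small admits a Kähler metric with $H>0$.

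For the non-existence half, I would invoke Berger's averaging: $H>0$ forces pointwise positive scalar curvature, hence $c_1(N)\cdot[\omega]^{\dim N-1}>0$. By adjunction, $c_1(N)=((r+1-p)H_1+sH_2)|_N$, and for $[\omega]=\alpha H_1|_N+\beta H_2|_N$ I would compute the Poincaré-dual intersection on $\mathbb{CP}^r\times\mathbb{CP}^s$ against $[N]=pH_1+H_2$. A routine expansion shows that the resulting polynomial in $\alpha,\beta$ has leading (in $\beta$) term $-p(p-r-1)\binom{r+s-2}{r-2}\alpha^{r-2}\beta^s$, which is strictly negative because $p>r+1$ and $r\geq 2$. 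Therefore $c_1(N)\cdot[\omega]^{\dim N-1}<0$ once $\beta/\alpha$ is sufficiently large, which rules out any Kähler metric of $H>0$ in those classes. The main technical points I anticipate are (i) pinning down the precise normalisation constant in the Gauss equation (and in the Fubini-Study scaling) so that the extremal values of $H^M$ come out exactly to $2$ and $\tfrac12$, and (ii) verifying that the smoothness hypothesis on $N$ forces $\pi_1$ to be a $\mathbb{CP}^{s-1}$-bundle globally (rather than only generically), so that \cite{AHZ} applies verbatim.
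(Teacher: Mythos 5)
Your proposal is correct, and for the second assertion it follows the paper's own argument essentially verbatim: the same identification $N=\mathbb{P}(E)$ with $E=\ker(\mathcal{O}^{\oplus(s+1)}\to\mathcal{O}(p))$ over $\mathbb{CP}^r$ feeding into \cite{AHZ} for existence, and the same obstruction $c_1(N)\cdot[\omega]^{n-1}<0$ for $\beta/\alpha\gg 1$ (via $H>0\Rightarrow$ positive scalar curvature) for non-existence; your worry (ii) about surjectivity of the sheaf map is a legitimate point that the paper glosses over, and it does follow from smoothness of $N$. For the first assertion you reach the same extremal values $2$ and $\tfrac12$ by a somewhat different computational route: the paper parametrizes the hypersurface, writes out the full induced metric matrix, and computes the curvature tensor of $\widetilde g$ at the origin directly (carrying out $n=2$ explicitly and asserting the general case), whereas you use the Gauss equation with $|h(X,X)|^2=|\partial_X\partial_XF|^2/|\partial F|_g^2$, which isolates the single correction term $-2|x\cdot y|^2$ against the product curvature. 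Your route buys a closed-form expression for $H$ that is uniform in $n$ and makes the optimization over $a\in[0,\tfrac12]$, $u+v=1-2a$, $0\le r\le uv$ completely transparent (I checked: the maximum $1$ and minimum $\tfrac14$ of $2a^2-2a+1-2uv-r$ are attained where you say, and the Cauchy--Schwarz equality $r=uv$ is realizable since $n\ge 2$); the paper's route yields the full curvature tensor, hence also the Einstein constant, as a byproduct. One small remark on the intersection computation: your coefficient $-p(p-r-1)\binom{r+s-2}{r-2}\alpha^{r-2}$ for the $\beta^s$-term is what a careful expansion gives, using $H_1^{r-1}H_2^{s}=p$ on $N$; the paper's displayed formula appears to drop this factor of $p$ in the $b^2$-term, but the sign, and hence the conclusion, is unaffected.
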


Similarly as in Question \ref{pinching intro},  we may ask

\begin{question} \label{pinching 3fold}
Is it true that if a compact K\"ahler $3$-fold with $H>0$ has its
local holomorphic pinching constant $\lambda > \frac{1}{4}$, then it
must be biholomorphic to a compact Hermitian symmetric space?
\end{question}

In a sequel of this paper, we will study examples of K\"ahler
metrics with $H>0$ on other rational surfaces, other K\"ahler
$C$-spaces, and higher dimensional projective manifolds.

This paper is organized as follows: In Section 2, we prove the
classification theorem of compact K\"ahler manifolds with local
holomorphic pinching constant $\lambda \geq \frac{1}{2}$. In Section
3, we prove the main Theorem \ref{main in intro} and studies the
relation between K\"ahler-Ricci flow and $H>0$ on $M_{n,k}$. In the
last Section 4, we study the canonical K\"ahler-Einstein metric on
the flag $3$-fold and prove Theorem \ref{some classes not}. We end
the paper with some discussions on $H>0$ in the higher dimension
case from the submanifold point of view.

\vspace{0.5cm}

\section{Holormophic sectional curvature: preliminary results}

Let us begin the definition of various curvatures on a K\"ahler
manifold.

\begin{definition}

Let $(M, g, J)$ be a K\"ahler manifold of complex dimension $n
\geq 2$ with the Levi-Civita connection $\nabla$ and Riemannian curvature tensor $R$.

(1) Sectional curvature for any real $2$-plane $\pi \subset T_p(M)$
is defined by $K(\pi)=\frac{R(X, Y, Y, X)}{|X|^2|Y|^2-g(X,Y)^2}$
where $\pi=\operatorname{span}\{X,Y\}$.

(2) Holomorphic sectional curvature (H) for any $J$-invariant real
$2$-plane $\pi \subset T_p(M)$ is defined by $H(\pi)=\frac{R(X, JX,
JX, X)}{|X|^4}$ where $\pi=\operatorname{span}\{X,JX\}$. For the
sake of simplicity, we freely use $H(X)$, $H(X-\sqrt{-1}JX)$ or
$H(\pi)$ for holomorphic sectional curvature.

(3) (Holomorphic) bisectional curvature for any two $J$-invariant
real $2$-planes $\pi, \pi^{,} \subset T_p(M)$ is defined by $B(\pi,
\pi^{,})=\frac{R(X, JX, JY, Y)}{|X|^2|Y|^2}$ where
$\pi=\operatorname{span}\{X,JX\}$ and
$\pi^{,}=\operatorname{span}\{Y,JY\}$.

\end{definition}

In the study of K\"ahler manifolds with positive curvature, it is
useful to consider some pinching condition in either a local or a
global sense.

\begin{definition} [Local pinching and global pinching]\label{pinching def}

Let $\lambda, \delta \in (0,1)$, we define the following pinching
conditions on a K\"ahler manifold $(M, g)$.

(1) $\lambda \leq H \leq 1$ in the local sense if for any $p \in M$,
$0<\lambda H(\pi^{,}) \leq H(\pi) \leq \frac{1}{\lambda} H(\pi^{,})$
for any $J$-invariant real $2$-planes $\pi, \pi^{,} \subset T_p(M)$.
In other words, there exists a function $\varphi(p)>0$ on $M^n$ such
that $0<\lambda \varphi(p) \leq H(p, \pi) \leq \frac{1}{\lambda}
\varphi(p)$ for any $p$ and any holomorphic plane $\pi \subset
T_p(M)$.

(2) $\delta \leq K \leq 1$ in the local sense if for any $p \in M$,
$0<\delta K(\pi^{,}) \leq K(\pi) \leq \frac{1}{\delta} K(\pi^{,})$
for any two real $2$-planes $\pi, \pi^{,} \subset T_p(M)$.

(3) $\lambda \leq H \leq 1$ in the global sense if $\lambda \leq
H(\pi) \leq 1$ for any $p \in M$ and any $J$-invariant real
2-plane $\pi \subset T_p(M)$. $\delta \leq K \leq 1$ in the global
sense is defined similarly.

\end{definition}

K\"ahler manifolds with $H>0$ are less understood and somewhat
mysterious. For example, if one works with linear algebra aspects of
curvature tensors, then $H>0$ alone does not give any helpful
information on the Ricci curvature. In fact, most of the Hirzebruch surfaces in Hitchin's examples are not Fano, thus
do not admit any K\"ahler metric with positive Ricci curvature.
Nonetheless one may study K\"ahler manifolds with $H>0$ pinched by
a large constant. In this regard, the following results of Berger
\cite{Berger1960} and Bishop-Goldberg \cite{BG1963} are very interesting.

\begin{proposition}[Berger \cite{Berger1960}]\label{Berger1960}
Let $(M^{n},g)$ be K\"ahler, then $0<\lambda \leq H \leq 1$ in the
local sense implies $\frac{7\lambda-5}{8} \leq K \leq
\frac{4-\lambda}{3}$ in the local sense.
\end{proposition}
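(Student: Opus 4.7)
I propose the following polarization-based plan. Fix a point $p\in M$ and, after rescaling, assume that every holomorphic sectional curvature at $p$ lies in $[\lambda, 1]$. Given orthonormal $X, Y\in T_pM$ spanning a real 2-plane $\pi$, the quantity $c = \langle X, JY\rangle$ is invariant under rotations of the basis $\{X, Y\}$ inside $\pi$; if $c=\pm 1$ then $\pi$ is $J$-invariant and $K(\pi)=H(\pi)\in[\lambda, 1]\subset[\tfrac{7\lambda-5}{8}, \tfrac{4-\lambda}{3}]$, so there is nothing to prove. Otherwise write $Y = -c\,JX + \sqrt{1-c^2}\,W$, where $W$ is a unit vector orthogonal to both $X$ and $JX$, so that $\{X, JX, W, JW\}$ is a real orthonormal frame. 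A direct bilinear expansion (using the Kähler identity $R(JA,JB,\cdot,\cdot) = R(A,B,\cdot,\cdot)$ and the symmetry $R(X, W, JX, X) = R(X, JX, W, X)$) yields
\begin{equation*}
K(X, Y) \;=\; c^2 H(X) + (1-c^2)\,K(X, W) \;-\; 2c\sqrt{1-c^2}\;C, \qquad C := R(X, JX, W, X).
\end{equation*}

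Next I would bound the two diagonal contributions by polarization. Since $W\perp JX$, the plane $\{X, W\}$ has vanishing Kähler angle, which puts us in the clean case. Expanding $R(X\pm W, J(X\pm W), J(X\pm W), X\pm W)$ and adding the two identities, the odd-parity terms cancel, and the six surviving "two-$W$" terms simplify via the first Bianchi identity and the Kähler identity to $6B(X, W) - 4K(X, W)$, where $B$ denotes bisectional curvature. Repeating with $X\pm JW$ and solving the resulting pair of linear equations for $K(X, W)$ and $K(X, JW)$ produces the identity
\begin{equation*}
K(X, W) \;=\; \tfrac{1}{8}\Bigl(3[H(X+JW)+H(X-JW)] - [H(X+W)+H(X-W)] - H(X) - H(W)\Bigr),
\end{equation*}
so that the hypothesis $H\in[\lambda,1]$ forces $K(X, W)\in\bigl[\tfrac{3\lambda-2}{4}, \tfrac{3-2\lambda}{4}\bigr]$; obviously also $H(X)\in[\lambda, 1]$.

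Finally, I would treat the cross term $C$ by a further polarization. Expanding $(s^2+t^2)^2 H(sX + tJW)$ in $(s, t)$, the Kähler identity forces its coefficient of $s^3 t$ to be $-4C$, and its coefficient of $s t^3$ to be a single auxiliary cross term. Evaluating at two independent scale ratios $(s, t)$, e.g.\ $(1,\pm 1)$ and $(2,\pm 1)$, and taking suitable differences isolates $C$ as an explicit linear combination of four $H$-values, yielding a bound of the form $|C|\leq \text{(const)}\cdot(1-\lambda)$. Substituting back into the formula displayed above gives $K(X,Y)$ as a trigonometric function of $c\in[-1,1]$ whose extremization, carried out jointly with the sign of $C$, produces the asserted bounds $\tfrac{7\lambda-5}{8}\leq K(X,Y)\leq \tfrac{4-\lambda}{3}$. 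The hard step is precisely this last optimization: a wasteful estimate like $K(X,Y)\leq \max\{H(X), K(X, W)\} + |C|$ recovers neither the denominator $3$ in the upper bound nor $8$ in the lower bound, because the extremum is attained at an intermediate Kähler angle rather than at $c=0$ or $c=\pm 1$. The sharp constants emerge only by carefully tracking how $c$, $|C|$, and the spread $1-\lambda$ of the holomorphic sectional curvatures trade off against each other.
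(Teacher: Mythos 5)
The paper does not actually prove Proposition \ref{Berger1960} --- it is quoted from \cite{Berger1960} --- so the only material to compare with is the sketched proof of Lemma \ref{Berger lemma}, which polarizes along $X\pm bY$ and $X\pm bJY$ for the given pair and invokes the Bianchi identity $R(X,JX,JY,Y)=R(X,Y,Y,X)+R(X,JY,JY,X)$, never introducing your auxiliary totally real vector $W$ or the odd--order component $C=R(X,JX,W,X)$. Your first two steps are correct: the decomposition $K(X,Y)=c^2H(X)+(1-c^2)K(X,W)-2c\sqrt{1-c^2}\,C$ is right, and your eight--term identity for the totally real plane is valid (it recovers, indeed slightly sharpens, the $\theta=\pi/2$ case of Proposition \ref{BG}), giving $K(X,W)\in\bigl[\tfrac{3\lambda-2}{4},\tfrac{3-2\lambda}{4}\bigr]$.

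The gap is the final step, and it is not merely a computation you postponed: the scheme as described cannot close. Write $|C|\le\gamma(1-\lambda)$ and minimize $c^2H(X)+(1-c^2)K(X,W)-2c\sqrt{1-c^2}\,C$ over $c$ with the three ingredients at their independent worst values; the minimum is $\tfrac{7\lambda-2}{8}-\bigl(\tfrac{(\lambda+2)^2}{64}+\gamma^2(1-\lambda)^2\bigr)^{1/2}$, and matching $\tfrac{7\lambda-5}{8}$ forces $\gamma^2\le\tfrac{5+\lambda}{64(1-\lambda)}$, i.e.\ $\gamma\le\tfrac{\sqrt5}{8}\approx0.28$ when $\lambda$ is small. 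But the two--ratio polarization you propose yields only $|C|\le\tfrac{11}{16}(1-\lambda)$ for the ratios $(1,\pm1)$ and $(2,\pm1)$, and optimizing the ratios still leaves roughly $0.58\,(1-\lambda)$; the extreme values of $H(X)$, $K(X,W)$ and $C$ are not simultaneously attainable, and decoupling them discards exactly the correlation that produces Berger's constants. The repair is to stay coupled, which is what the machinery already displayed for Lemma \ref{Berger lemma} does: with $K=R(X,Y,Y,X)$ and $K_2=R(X,JY,JY,X)$, the right--hand half of (\ref{Bergerlemma1}) gives $K+3K_2\le 2-\lambda$, while (\ref{Bergerlemma5}) rewritten via the Bianchi identity reads $3K+K_2\ge 2\lambda(1+\cos^2\theta)-1\ge 2\lambda-1$; eliminating $K_2$ yields $8K\ge 3(2\lambda-1)-(2-\lambda)=7\lambda-5$ at once. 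The upper bound is the genuinely hard half: the same elimination only gives $8K\le 13-5\lambda$, which is weaker than $\tfrac{4-\lambda}{3}$, so Berger's upper constant requires further polarizations (over complex, not merely real and purely imaginary, coefficients of $Y$), and your plan does not reach it either.
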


\begin{proposition}[Bishop-Goldberg \cite{BG1963}] \label{BG}
If $(M^{n},g)$ is K\"ahler, then $0<\lambda \leq H(p) \leq 1$
implies \[ \frac{1}{4}[3(1+\cos^2\theta) \lambda-2] \leq K(X,Y) \leq
1-\frac{3}{4} \lambda \sin^2\theta \] for any unit tangent vectors
$X, Y$ at $p$ with $g(X,Y)=0$ and $g(X,JY)=\cos \theta$. In
particular, $\lambda$-holomorphic pinching implies
$\frac{1}{4}(3\lambda-2)$-pinching on sectional curvatures.
\end{proposition}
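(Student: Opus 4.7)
The plan is to use polarization of the K\"ahler curvature tensor: since $R$ is determined on a K\"ahler manifold by the holomorphic sectional curvature function $H$, any sectional curvature $K(X,Y)=R(X,Y,Y,X)$ can be written as an explicit linear combination of $H$-values at unit vectors built from $X$, $Y$, $JX$, $JY$, with coefficients depending on $\cos\theta = g(X,JY)$. Applying the hypothesis $\lambda \leq H \leq 1$ termwise then yields the claimed inequality.

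First I would decompose $Y$ in an adapted frame by writing $Y = -\cos\theta\, JX + \sin\theta\, Z$, where $Z$ is a unit vector perpendicular to $\mathrm{span}\{X,JX\}$. Multilinearity produces
\[
R(X,Y,Y,X) = \cos^2\theta\, H(X) - 2\sin\theta\cos\theta\, R(X,JX,Z,X) + \sin^2\theta\, R(X,Z,Z,X),
\]
which already exhibits the $\cos^2\theta$ and $\sin^2\theta$ structure appearing in the claim. The last term is a totally real sectional curvature, since $X\perp Z$ and $X\perp JZ$; a standard K\"ahler polarization---obtained by expanding $H(X\pm Z)$ and $H(X\pm JZ)$ (each of norm $\sqrt{2}$) and applying the first Bianchi identity together with the K\"ahler relation $R(A,B,JC,JD)=R(A,B,C,D)$---expresses it, modulo an auxiliary bisectional correction $R(X,JX,JZ,Z)$ that is handled by the same procedure, as a linear combination of $H$-values. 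Substituting $\lambda\leq H\leq 1$ then gives sharp linear-in-$\lambda$ bounds for $R(X,Z,Z,X)$ in this totally real situation.

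The cross term $R(X,JX,Z,X)$ is read off as the odd Fourier coefficient (of $\cos^3 t\sin t$) in the degree-$4$ trigonometric polynomial $t\mapsto H(X\cos t + JZ\sin t)$; since the latter is $[\lambda,1]$-valued, a Bernstein-type inequality controls it by a multiple of $(1-\lambda)$. Combining the three estimates with $\cos^2\theta + \sin^2\theta = 1$ yields the inequality $\tfrac{1}{4}[3(1+\cos^2\theta)\lambda - 2]\leq K(X,Y)\leq 1-\tfrac{3}{4}\lambda\sin^2\theta$.

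The main obstacle is sharpness: bounding the three pieces independently does not automatically give the clean $1 - \tfrac{3}{4}\lambda\sin^2\theta$ upper bound for $\cos^2\theta$ close to $1$. The classical remedy, which I would use, is the auxiliary identity
\[
R(X,JY,JY,X) \;=\; \sin^2\theta \cdot K(X,Y'),
\]
where $Y' = (JY - \cos\theta\, X)/\sin\theta$ is the totally-real unit vector orthogonal to both $X$ and $JX$; this follows by direct expansion since $R(X,X,\cdot,\cdot)=0$. Feeding this relation back into the polarization identity reduces the general-$\theta$ case to the totally real one with an exact $\sin^2\theta$ weight, matching the coefficients $\tfrac{3}{4}$ and $\tfrac{1}{4}$ in the claim. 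The concluding assertion on $\tfrac{1}{4}(3\lambda-2)$-pinching of sectional curvatures is then the $\cos\theta=0$ specialization of the lower bound.
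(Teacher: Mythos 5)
The paper does not actually prove this proposition --- it is quoted from Bishop--Goldberg \cite{BG1963} --- so your argument can only be measured against the standard proof, whose template appears in the paper as Berger's Lemma \ref{Berger lemma}: one averages $H$ over the \emph{even} combinations $\frac{1}{2}[H(aX+bY)+H(aX-bY)]$ and $\frac{1}{2}[H(aX+bJY)+H(aX-bJY)]$, so that all odd-degree terms in the quartic expansion of $R(Z,JZ,JZ,Z)$ cancel identically, and then optimizes over $a,b$. Your setup (the frame $Y=-\cos\theta\,JX+\sin\theta\,Z$, the resulting three-term expansion of $K(X,Y)$, and the identity $R(X,JY,JY,X)=\sin^2\theta\,R(X,JZ,JZ,X)$) is correct, but by expanding $R(X,Y,Y,X)$ directly you create the odd cross term $-2\sin\theta\cos\theta\,R(X,JX,Z,X)$, and this is where the proof breaks down.

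The gap is that no bound of the form $|R(X,JX,Z,X)|\leq C(1-\lambda)$, applied termwise, can yield the stated inequality. The slack between the claimed lower bound and what the other two terms provide is exactly
\[
\cos^2\theta\,\lambda+\sin^2\theta\cdot\tfrac{3\lambda-2}{4}-\tfrac{1}{4}\bigl[3(1+\cos^2\theta)\lambda-2\bigr]=\tfrac{1-\lambda}{2}\cos^2\theta ,
\]
which must absorb an error of size $2C(1-\lambda)|\sin\theta\cos\theta|$; this requires $|\cot\theta|\geq 4C$ and therefore fails for every nonzero $C$ on a range of $\theta$ near $\pi/2$. Since $R(X,JX,Z,X)$ does not itself vanish or degenerate as $\theta$ varies (it depends only on $X$ and $Z$), the termwise strategy cannot produce the clean bounds, which contain only $\cos^2\theta$ and $\sin^2\theta$ and no $\sin\theta\cos\theta$ term. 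Your proposed remedy does not address this: the identity $R(X,JY,JY,X)=\sin^2\theta\,K(X,JZ)$ concerns the sectional curvature of the plane $(X,JY)$, which never appears in your expansion of $K(X,Y)$, and you give no mechanism by which ``feeding it back'' removes $R(X,JX,Z,X)$. In addition, the ``Bernstein-type inequality'' and the ``sharp linear-in-$\lambda$ bounds'' for the totally real term are asserted rather than derived, and the final constants $\tfrac34$ and $\tfrac14$ depend on exactly these unproved estimates. To repair the argument you should abandon the direct expansion and instead work with the even averages above (as in the paper's proof of Lemma \ref{Berger lemma}), extract the inequalities $R(X,JX,JY,Y)+2K(X,Y)\geq 2\lambda-1+2\lambda\cos^2\theta$ and their companions from the discriminant of the resulting quartic forms, and combine them with the two-sided bisectional bound (\ref{Berger pinching}); the cross term then never appears.
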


In the proof of the above Proposition \ref{Berger1960}, Berger
discovered an interesting inequality.
\begin{lemma}[Berger \cite{Berger1960-1} and \cite{Berger1960}]
\label{Berger lemma} Let $(M^n, g)$ be a K\"ahler manifold and
$0<\lambda \leq H \leq 1$ in the local sense, then for any unit
vector $X, Y$ with $g(X,Y)=0$ and $g(X,JY)=\cos {\theta}$, we have
\begin{equation}
\lambda-\frac{1}{2}+\frac{\lambda}{2} \cos^2 \theta \leq R(X, JX,
JY,Y) \leq 1-\frac{\lambda}{2}+\frac{1}{2}\cos^2 \theta.
\label{Berger pinching}
\end{equation}
\end{lemma}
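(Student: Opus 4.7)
The plan is to prove Berger's inequality via a polarization identity for the quartic form $\mathcal{R}(V) := R(V, JV, JV, V)$ on tangent vectors. Since $H(V) = \mathcal{R}(V)$ when $V$ is a unit vector and $\mathcal{R}$ is homogeneous of degree~$4$, the pinching hypothesis $\lambda \le H \le 1$ (after the customary normalization at the point in question) translates into the pointwise estimate $\lambda\,|V|^4 \le \mathcal{R}(V) \le |V|^4$ for every tangent vector $V$. The proof will then reduce to establishing the key polarization identity
\begin{equation}\label{eq:Berger-key}
\mathcal{R}(X+Y) + \mathcal{R}(X-Y) + \mathcal{R}(X+JY) + \mathcal{R}(X-JY) = 4\bigl[H(X)+H(Y)\bigr] + 16\,R(X,JX,JY,Y),
\end{equation}
and then combining it with the pointwise bounds together with the norm computations $|X \pm Y|^2 = 2$ and $|X \pm JY|^2 = 2(1 \pm \cos\theta)$.

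To establish \eqref{eq:Berger-key}, I would expand each summand on the left by multilinearity of the Riemann tensor. In the sum $\mathcal{R}(X+Y) + \mathcal{R}(X-Y)$ all odd-degree cross terms cancel, leaving $2H(X) + 2H(Y) + 2Q_1$, where $Q_1$ is a sum of six bisectional-type curvature components; using the K\"ahler identity $R(JA,JB,C,D) = R(A,B,C,D)$, pair symmetry, and the first Bianchi identity, these six components collapse to $Q_1 = 6B_0 - 4K$, where $B_0 := R(X,JX,JY,Y)$ is the quantity to be bounded and $K := K(X,Y)$ denotes the ordinary sectional curvature. An analogous calculation for the second pair yields $\mathcal{R}(X+JY) + \mathcal{R}(X-JY) = 2H(X)+2H(Y)+2Q_2$ with $Q_2 = 2B_0 + 4K$; the sign flip of the $K$-contribution arises because terms such as $R(JY,JX,JX,JY)$ reduce via the K\"ahler identity directly to $R(Y,X,X,Y) = +K$ rather than through the Bianchi-derived $-K$ encountered in the first pair. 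Summing the two identities, the $K$-terms cancel and \eqref{eq:Berger-key} emerges.

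The argument then concludes by direct application of the pinching bounds. For the upper bound, using $H(X), H(Y) \ge \lambda$ and $\mathcal{R}(V) \le |V|^4$,
\[
16\,B_0 \le \bigl(4 + 4 + 4(1+\cos\theta)^2 + 4(1-\cos\theta)^2\bigr) - 8\lambda = 16 + 8\cos^2\theta - 8\lambda,
\]
which yields $B_0 \le 1 - \lambda/2 + \tfrac{1}{2}\cos^2\theta$. The lower bound follows symmetrically by using $H(X), H(Y) \le 1$ and $\mathcal{R}(V) \ge \lambda\,|V|^4$.

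The main obstacle is the algebraic reduction in the second paragraph: six cross terms in each polarization have to be carefully tracked through the K\"ahler and Bianchi symmetries, and a priori one expects unavoidable dependence on the sectional curvature $K(X,Y)$ in each identity separately. The cancellation of $K$ when the two polarization pairs are combined is what makes the identity useful, and verifying this cancellation cleanly is where the most care is required.
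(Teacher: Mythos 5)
Your proposal is correct --- I checked the polarization identity \eqref{eq:Berger-key} (using $R(X,JX,JY,Y)=R(X,Y,Y,X)+R(X,JY,JY,X)$ one finds $Q_1=6B_0-4K$ and $Q_2=2B_0+4K$ as you claim, so the $K$-terms cancel) and the final estimates --- and it is essentially the same argument as the paper's. Both proofs average $H$ over the four vectors $X\pm Y$ and $X\pm JY$ and then invoke the local pinching; you merely package the two averaging steps into a single identity in which the unwanted curvature component cancels, whereas the paper records them as two separate inequalities, (\ref{Bergerlemma3}) and (\ref{Bergerlemma5}), involving $R(X,JY,JY,X)$ and eliminates that term by adding them at the end.
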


For the convenience of the readers, we sketch Berger's proof of
Lemma \ref{Berger lemma}, as it will be crucial in the proof of
Proposition \ref{half pinching} below.

\begin{proof}[Berger's proof of Lemma \ref{Berger lemma}]
Given any unit vector $X, Y$ with $g(X,Y)=0$ and $g(X,JY)=\cos
{\theta}$, consider
\begin{equation}
\lambda \leq \frac{1}{2}\Big[H(aX+bY)+H(aX-bY)\Big] \leq 1
\label{Bergerlemma1}
\end{equation}
By the left half of inequality (\ref{Bergerlemma1}), we
conclude that
\begin{equation}
(H(X)-\lambda) a^4+(R(X, JX, JY, Y)+2R(X, JY, JY, X)-\lambda) 2a^2
b^2+(H(Y)-\lambda) b^4 \geq 0  \label{Bergerlemma2}
\end{equation}
holds for any real numbers $a$, $b$.
Apply $H(X), H(Y) \leq 1$, it follows from (\ref{Bergerlemma2}) that
\begin{equation}
R(X, JX, JY, Y)+2R(X, JY, JY, X) \geq 2\lambda-1.
\label{Bergerlemma3}
\end{equation}
Next consider
\begin{equation}
\lambda \leq \frac{1}{2}\Big[H(aX+bJY)+H(aX-bJY)\Big] \leq 1.
\label{Bergerlemma4}
\end{equation}
Since  $$
\frac{1}{2} \{  (a^2+b^2+2ab \cos \theta)^2 +  (a^2+b^2-2ab \cos \theta)^2 \} =
(a^2+b^2)^2+4a^2b^2 \cos^2 \theta,$$
a similar argument as in
(\ref{Bergerlemma2}) and (\ref{Bergerlemma3}) leads to
\begin{equation}
3R(X, JX, JY, Y)-2R(X, JY, JY, X) \geq 2\lambda+2\lambda \cos^2
\theta-1. \label{Bergerlemma5}
\end{equation}
By adding (\ref{Bergerlemma3}) and (\ref{Bergerlemma5}) we have
\begin{equation}
R(X, JX, JY, Y)\geq \lambda+\frac{\lambda}{2} \cos^2
\theta-\frac{1}{2}. \label{Bergerlemma6}
\end{equation}
The right half of inequality (\ref{Berger pinching}) can be proved
similarly if we work on the right halves of inequalities in both
(\ref{Bergerlemma1}) and (\ref{Bergerlemma4}).
\end{proof}

It is possible to get some characterization of K\"ahler manifolds
with a large holomorphic pinching constant $\lambda$. For example,
Bishop-Goldberg \cite {BG1963} proved that if $\frac{4}{5}<\lambda
\leq H \leq 1$ holds in the local sense on a compact K\"ahler
manifold $(M, g)$, then $M$ has the homotopy type of
$\mathbb{CP}^n$. They also proved in \cite{BG1965} that $\lambda>
\frac{1}{2}$ implies $b_2(M)=1$. Note that a direct calculation
shows that $\mathbb{CP}^k \times \mathbb{CP}^l$ with the product of
Fubini-Study metric has exactly $\frac{1}{2} \leq H \leq 1$ (see
\cite{ACH} for a general result on holomorphic pinching of product
metrics). In light of these results, it is natural to ask if
$\frac{1}{2}<\lambda \leq H \leq 1$ in the local sense implies that
$M^n$ is biholomorphic to $\mathbb{CP}^n$. This is indeed the case
and we have the following:

\begin{proposition} \label{half pinching}
Let $(M^n,g)$ be a compact K\"ahler manifold with $0<\lambda \leq H
\leq 1$ in the local sense, then the following holds:

(1) If $\lambda>\frac{1}{2}$, then $M^n$ is bibolomorphic to
$\mathbb{CP}^{n}$.

(2) If $\lambda=\frac{1}{2}$, then $M^n$ is one of the following

(2a) $M^n$ is biholomorphic to $\mathbb{CP}^{n}$.

(2b) $M^n$ is holomorphically isometric to $\mathbb{CP}^k \times
\mathbb{CP}^{n-k}$ with a product of Fubini-Study metrics.

(2c) $M^n$ is holomorphically isometric to an irreducible compact
Hermitian symmeric space of rank $2$ with its canonical
K\"ahler-Einstein metric.
\end{proposition}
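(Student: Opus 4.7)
The plan is to convert the local pinching hypothesis into a statement about orthogonal holomorphic bisectional curvature, and then to feed the resulting curvature condition into existing rigidity and classification theorems. The bridge is Berger's Lemma \ref{Berger lemma}: specializing the estimate to unit vectors $X, Y$ with $g(X,Y)=g(X,JY)=0$ (i.e.\ $\cos\theta=0$), the left half of (\ref{Berger pinching}) collapses to
\[
R(X, JX, JY, Y) \;\geq\; \lambda - \frac{1}{2}.
\]
Hence $\lambda \geq \frac{1}{2}$ forces the orthogonal holomorphic bisectional curvature to be non-negative everywhere, and strictly positive when $\lambda > \frac{1}{2}$.

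For part (1), with $\lambda > \frac{1}{2}$, I would invoke the rigidity theorem---proved independently by X.~Chen and by Gu-Zhang, with an alternative Ricci-flow-based argument due to Wilking---asserting that a compact K\"ahler manifold with positive orthogonal holomorphic bisectional curvature is biholomorphic to $\mathbb{CP}^n$. This gives (1) immediately.

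For part (2), with $\lambda = \frac{1}{2}$, I would invoke the companion classification theorem of Gu-Zhang for the non-negative case: the universal cover of $M$ splits holomorphically and isometrically as
\[
\mathbb{C}^{k_0}\;\times\;\prod_i \mathbb{CP}^{n_i}\;\times\;\prod_j N_j,
\]
where each $N_j$ is an irreducible compact Hermitian symmetric space of rank $\geq 2$. Since $H>0$ implies simple connectedness by Tsukamoto's theorem (cited in Section~1), $M$ itself has this product form, and compactness together with $H>0$ excludes the flat factor.

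The main obstacle is the final combinatorial step: using the equality $\lambda=\frac{1}{2}$ to eliminate every product shape except those listed in (2a)--(2c). For a K\"ahler product $M_1\times M_2$ and a unit vector $X=X_1+X_2$ with $|X_i|^2 = t_i$ and $t_1+t_2=1$, a direct expansion gives
\[
H(X)\;=\; t_1^2\, H_1\!\left(\frac{X_1}{|X_1|}\right) + t_2^2\, H_2\!\left(\frac{X_2}{|X_2|}\right),
\]
which makes the product pinching explicit. Applied to $\mathbb{CP}^{n_1}\times \mathbb{CP}^{n_2}$ with constant holomorphic sectional curvatures $c_1\geq c_2$, it yields pinching $c_2/(c_1+c_2)$, which equals $\frac{1}{2}$ precisely when $c_1=c_2$ (case (2b)); three or more $\mathbb{CP}$-factors drop the pinching below $\frac{1}{3}$. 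For a single irreducible compact Hermitian symmetric space of rank $r$ the holomorphic pinching is the classical value $1/r$, so only $r=1$ (case (2a)) or $r=2$ (case (2c)) is compatible with $\lambda=\frac{1}{2}$. Mixed products containing a rank-$\geq 2$ irreducible symmetric factor are ruled out by the same product formula, since the minimum of $H$ over mixed directions is then strictly less than half of the maximum. What remains is precisely the trichotomy (2a)--(2c).
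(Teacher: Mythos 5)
Your proposal is correct and follows essentially the same route as the paper: Berger's inequality at $\cos\theta=0$ yields nonnegative (resp.\ positive) orthogonal holomorphic bisectional curvature, part (1) follows from the Chen/Gu--Zhang/Wilking rigidity theorem together with Mori and Siu--Yau, and part (2) from the Gu--Zhang splitting of the universal cover combined with the pinching arithmetic for products and for irreducible Hermitian symmetric spaces of rank $r$. The only cosmetic differences are that you derive the product pinching formula directly (the paper cites Alvarez--Chaturvedi--Heier for $\lambda_1\lambda_2/(\lambda_1+\lambda_2)$) and invoke Tsukamoto explicitly to descend from the universal cover.
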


\begin{proof}[Proof of Proposition \ref{half pinching}]
Let us consider $n \geq 2$, the crucial observation is that
$\frac{1}{2} \leq \lambda \leq 1$ in the local sense implies that
$(M^n,g)$ has nonnegative orthogonal holomorphic bisectional
curvature. Namely for any two $J$-invariant planes
$\pi=\operatorname{span}\{X,JX\}$ and
$\pi^{,}=\operatorname{span}\{Y,JY\}$ in $T_p (M)$ which are
orthogonal in the sense that $g(X, Y)=g(X, JY)=0$, then
\[
R(X, JX, JY, Y) \geq 0.
\]
This follows from Berger's inequality (\ref{Berger pinching}).

Nonnegative and positive orthogonal bisectional curvature is well
studied in \cite{ChenX}, \cite{GuZhang}, and \cite{Wilking}.

If $\lambda>\frac{1}{2}$ then $(M^n, g)$ has positive orthogonal
bisectional curvature, it is proved in \cite{ChenX}, \cite{GuZhang},
and \cite{Wilking} that the K\"ahler-Ricci flow evolves such a
metric to positive bisectional curvature, which is biholomorphic to
$\mathbb{CP}^n$ by \cite{Mori} and Siu-Yau \cite{SiuYau}.

If $\lambda>\frac{1}{2}$ then $(M^n, g)$ has nonnegative orthogonal
bisectional curvature, according to a classification result due to
Gu-Zhang (Theorem in 1.3 in \cite{GuZhang}), combining the fact
$H>0$, then the universal covering manifold $(\tilde{M}, \tilde{g})$
is holomorphically isometric to
\begin{equation}
(\mathbb{CP}^{k_1}, g_{k_1}) \times \cdots \times
(\mathbb{CP}^{k_r}, g_{k_r}) \times (N^{l_1}, h_{l_1}) \times \cdots
(N^{k_r}, h_{l_s})  \label{product list}
\end{equation}
Where each of $(N^{l_i}, h_{l_i})$ is a compact irreducible
Hermitian symmetric spaces of rank $\geq 2$ with its canonical
K\"ahler-Einstein metric, The holomorphic pinching constant of such
a metric was well-studied and it is exactly the reciprocal of its
rank, see for example \cite{Chen1977}. On the other hand, the
pinching of product K\"ahler metrics was studied by \cite{ACH}, the
proved that the pinching constant of a produce K\"ahler metric $(M_1
\times M_2, g_1 \times g_2)$ is $\frac{\lambda_1
\lambda_2}{\lambda_1+\lambda_2}$ where $\lambda_1$ and $\lambda_1$
are pinching constants of $M_1$ and $M_2$ respectively. It is
clearly that $\frac{\lambda_1
\lambda_2}{\lambda_1+\lambda_2}=\frac{1}{2}$ is equivalent to
$\lambda_1=\lambda_2=1$.

Therefore if $\lambda=\frac{1}{2}$, The decomposition (\ref{product
list}) reduces to either a single $\mathbb{CP}^{n}$ or a single
irreducible Hermitian symmetric space of rank $2$, or a product of
$\mathbb{CP}^k \times \mathbb{CP}^{n-k}$ with product Fubini-Study
metrics. Obviously this decomposition descends to the original
manifold $(M^n , g)$.

\end{proof}

A natural question following Proposition \ref{half pinching} is what
is the next threshold, if any, for the holomorphic pinching
constants for K\"ahler manifolds with $H>0$. In general, the
situation might be complicated. Note that the canonical
K\"ahler-Einstein metric on a compact Hermitian symmetric space has
holomorphic pinching constant determined by its rank
(\cite{Chen1977}). The K\"ahler-Einstein metrics on a lot of the
K\"ahler $C$-spaces also have $H>0$, and in general one has to work
with the corresponding Lie algebra carefully to determine its
holomorphic pinching constant. Nonetheless, in this paper we focus
on the case of dimension $2$ and $3$, we will see in Section 3 and 4
that Hirzebruch surfaces and the flag 3-space might be the right
objects to provide the next interesting threshold for the
holomorphic pinching constant.

\vspace{0.8cm}

\section{K\"ahler metrics with $H>0$ on Hirzebruch manifolds}

In this section we first review Hitchin's examples on Hirzebruch
surfaces $M_{2,k}$, then we prove the main Theorem \ref{main in
intro} and study the relation between the K\"ahler-Ricci flow and
$H>0$.

\subsection{A review of Hitchin's construction}

Hitchin \cite{Hitchin} proved that any compact K\"ahler surface with
positive sectional curvature is rational. Any rational surface can
be obtained by blowing up points on $\mathbb{P}^{2}$,
$\mathbb{P}^{1} \times \mathbb{P}^{1}$, and Hirzebruch surfaces
$M_{2, k}$. The natural question is which rational surface admits
K\"ahler metric with $H>0$. In this regard, Hitchin proved that any
Hirzebruch surface $M_{2, k}$ admits a Hodge metric of $H>0$.
Moreover, he proved that the blow up of any compact K\"ahler
manifold with positive scalar curvature admits a K\"ahler metric
with positive scalar curvature when the complex dimension $n \geq
2$. As a corrolary, he showed that any rational surface admits a
K\"ahler metric with positive scalar curvature.

In general, given any Hermitian vector bundle $(E, h) \rightarrow
(M,g)$ where $(M,g)$ is a compact K\"ahler manifold, the Chern
curvature form $\Theta (\mathcal{O}_{{\mathbb P}(E)} (1))$ of
$\mathcal{O}_{{\mathbb P}(E) (1)}$ over ${\mathbb P}(E)$ has the fiber direction
component given by the Fubini-Study form, hence is positive.
Therefore
\begin{equation}
\tilde{\omega}=\pi^{\ast} \omega_g+ s \sqrt{-1} \Theta
(\mathcal{O}_{{\mathbb P}(E)} (1))  \label{Hitchin metric}
\end{equation}
is a well-defined K\"ahler metric on $P(E)$ when $s>0$ is sufficiently
small.

Hitchin \cite{Hitchin} studied K\"ahler metrics of the from
(\ref{Hitchin metric}) on Hirzebruch surfaces $M_{2, k}$, Here we
pick $(E, h)=(H^{k} \oplus 1_{\mathbb{CP}^{1}}, h)$ and $(M, g)$
as $(\mathbb{CP}^1, g_{FS})$ where $g_{FS}$ is the standard
Fubini-Study metric and $h$ the induced metric. If we use the local
parametrization $(z_1, (dz_1)^{-\frac{k}{2}}, z_2)$ and write down
the metric locally
\begin{equation}
\tilde{\omega}=\sqrt{-1} \partial {\bar{\partial}}\log (1+|z_1|^2)+
s\sqrt{-1} \partial {\bar{\partial}}\log [(1+|z_1|^2)^k+|z_2|^2]
\end{equation}
In this case, since the vector bundle $H^{k} \oplus
1_{\mathbb{CP}^{1}}$ has nonnegative curvature, the component of the
Chern curvature form $\Theta (\mathcal{O}_{{\mathbb P}(E)} (1))$ along the
base direction is nonnegative, so $\tilde{\omega}$ is in fact
a K\"ahler metric for all $s>0$.

Hitchin \cite{Hitchin} proved that $\tilde{\omega}$ has $H>0$ if
$0<s(1+ks)^2 <\frac{1}{k(2k-1)}$. Alvarez-Chaturvedi-Heier
\cite{ACH} further proved that it suffices to assume
$s<\frac{1}{k^2}$ to guarantee $H>0$. Let us define the optimal
local (global) holomorphic pinching constant to be the maximum value
among all the pinching constants of Hitchin's examples according to
Definition \ref{pinching def}. It was calculated in \cite{ACH} that
the optimal local and global holomorphic pinching constants are the
same and equal to $\frac{1}{(2k+1)^2}$, and the
corresponding $s=\frac{1}{2k^2+k}$. The corresponding K\"ahler class
is $b[E_{\infty}]-aE_0$ where $b=\frac{2k+2}{2k+1}a>0$. In
particular, if $k=1$, then $s=\frac{1}{3}$, the corresponding
K\"ahler metric $\tilde{\omega}$ is not in the anti-canonical class
of $2\pi c_1 (M_{2,1})$, note that $M_{2,1}$ is the only Fano
Hirzebruch surface.

Let us rephrase the question we proposed in Section 1 of this paper.

\begin{question}
Hitchin's examples produce a family of K\"ahler metrics with $H>0$
whose K\"ahler classes only stay in a subset of the K\"ahler cone.
The path does not approach both sides of the essential boundary of
the K\"ahler cone of $M_{2,k}$. Here by essential we mean that here
the vertex of the cone is not counted as the boundary.

Are there K\"ahler metric with $H>0$ from each of the K\"ahler
classes of $M_{2,k}$? In particular, since $c_1 (M_{2,1})>0$, it would be
interesting to know there is any metric with $H>0$ from the anti-canonical
class of $M_{2,1}$.

What is the best holomorphic pinching constant $\lambda_k$ among all
K\"ahler metrics of $H>0$ on the Hirzebruch surfaces $M_{2, k}$?
Note that Hitchin's examples are of $U(2)$-symmetry, it seems
reasonable to expect the optimal holomorphic pinching constant
$\lambda_k$ to be realized by some K\"ahler metric with a large
symmetry.

Let $\lambda_k$ denote the optimal holomorphic pinching constant
among all K\"ahler metrics of $H>0$ on $M_{2, k}$. Is it true that
any compact K\"ahler surface with pinching constant strictly greater
$\lambda_1$ must be biholomorphic to $\mathbb{CP}^{2}$ or
$\mathbb{CP}^{1} \times \mathbb{CP}^{1}$?
\end{question}

\vspace{0.3cm}

\subsection{Hirzebruch manifolds by Calabi's ansatz}

Let us recall a powerful method to construct canonical metrics
pioneered by Calabi (Calabi's ansatz). Our exposition follows more
closely from Koiso-Sakane \cite{KS1986}. As we shall see later, for
Hirzebruch manifolds $M_{n, k}$, Calabi's ansatz can be applied
to produce $U(n)-$invariant K\"ahler metrics which include Hitchin's
examples as a special case.

\subsubsection{K\"{a}hler metrics on  $\mathbb{C}^{\ast}$-bundles
reviewed}

First we review some facts on the construction of a K\"{a}hler
metric on a $\mathbb{C}^{\ast}$-bundle over a compact K\"{a}hler
manifold where $\mathbb{C}^{\ast}=\mathbb{C}-\{0\}$. Given a
holomorphic line bundle $L \rightarrow M$ on a complex manifold $M$,
where $\pi$ is the natural projection, we consider the
$\mathbb{C}^{\ast}$-action on $L^{\ast}=L\setminus {L_0}$, where
$L_0$ is the zero section of $L$. Denote by $H$ and $S$ the two
holomorphic vector fields generated by the $\mathbb {R}^{+}$ and
$\mathbb{S}^{1}$ action, respectively.

Let $\pi: (L, h) \rightarrow (M, g)$ be a Hermitian line bundle over
a compact K\"{a}hler manifold $(M, g)$. Denote by $\tilde{J}$ the
complex structure on $L$. Assume $t$ is a smooth function on $L$
depending only on the norm and is increasing in the norm of
Hermitian metric $h$. Consider a Hermitian metric on $L^{\ast}$ of
the form
\begin{equation}
\tilde{g}=\pi^{\ast}g_{t}+dt^{2}+(dt\circ \tilde{J})^{2}, \label{def
of g}
\end{equation}
where $g_t$ is a family of Riemannian metrics on $M$. Denote by
$u(t)^{2}=\tilde{g}(H,H)$. It can be checked that $u$ depends only
on $t$.

The following results were proved in \cite{KS1986}.

\begin{lemma}[\cite{KS1986}]

The Hermitian metric $\tilde{g}$ defined by (\ref{def of g}) is
K\"{a}hler on $L^{\ast}$ if and only if each $g_{t}$ is K\"{a}hler
on $M$, and $g_{t}=g_{0}-U \Theta(L)$, where $U=\int_{0}^{t} u(\tau )d\tau $, and if the
range of $t$ includes $0$, then the corresponding value of $U$ is $0$.

\end{lemma}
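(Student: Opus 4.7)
The plan is to convert the Kähler condition $d\tilde\omega=0$ on $L^{\ast}$ into an ODE for the family $g_t$ that can be integrated in closed form. The fiber piece $dt^{2}+(dt\circ\tilde J)^{2}$ of $\tilde g$ is manifestly $\tilde J$-invariant, since $\tilde J$ interchanges the radial and angular one-forms $dt$ and $\eta:=dt\circ\tilde J$ up to a sign; consequently, for $\tilde g$ itself to be Hermitian, each $g_t$ must be Hermitian on $M$. In that case the fundamental two-form of $\tilde g$ takes the clean form
\[
\tilde\omega \;=\; \pi^{\ast}\omega_t + dt\wedge\eta,
\]
where $\omega_t$ denotes the fundamental form of $g_t$ on $M$, pulled back to $L^{\ast}$.

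The heart of the proof is to compute $d\tilde\omega$ and decompose it by form type using the horizontal/vertical splitting induced by the Chern connection of $(L,h)$. Since $t$ depends only on $|v|_h$, in a local holomorphic trivialization with $|e|_h^{2}=a$ one has $\eta=t'(\rho)\,d\rho\circ\tilde J$ with $\rho=\log|v|_h^{2}$, and a direct local computation—using $\partial\bar\partial\log|w|^{2}=0$ on the fiber and recognizing $\partial\bar\partial\log a$ as the Chern curvature on the base—yields the key identity
\[
d\eta \;\equiv\; u(t)\,\pi^{\ast}\Theta(L) \pmod{dt},
\]
up to a sign dictated by the conventions for $\Theta(L)$ and $\tilde J$. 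Combining this with $d(\pi^{\ast}\omega_t)=\pi^{\ast}d_M\omega_t+dt\wedge\pi^{\ast}\partial_t\omega_t$ and $d(dt\wedge\eta)=-dt\wedge d\eta$ gives
\[
d\tilde\omega \;=\; \pi^{\ast}d_M\omega_t \;+\; dt\wedge\pi^{\ast}\!\bigl(\partial_t\omega_t - u(t)\,\Theta(L)\bigr),
\]
and the two summands live in complementary subspaces of $\Lambda^{3}T^{\ast}L^{\ast}$.

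Thus $d\tilde\omega=0$ is equivalent to two conditions: $d_M\omega_t=0$ (each $g_t$ is Kähler on $M$) and the linear ODE $\partial_t\omega_t=u(t)\,\Theta(L)$, which integrates to $\omega_t=\omega_0+U(t)\,\Theta(L)$, or in the paper's sign convention $g_t=g_0-U\Theta(L)$; the normalization $U(0)=0$ is exactly the initial value of $U=\int_0^{t}u(\tau)\,d\tau$ when $0$ lies in the range of $t$. The converse direction simply reverses these identifications: once $g_t$ is Kähler and satisfies the stated formula, both summands in the decomposition of $d\tilde\omega$ vanish automatically. The only non-routine step, and the main obstacle I anticipate, is the explicit identification of the horizontal part of $d\eta$ with $u(t)\pi^{\ast}\Theta(L)$; this requires a careful local frame computation to reconcile the signs among $\Theta(L)$, $\tilde J$, and the operator $\alpha\mapsto\alpha\circ\tilde J$.
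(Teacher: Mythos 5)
Your argument is correct in outline, and it is worth noting that the paper itself offers no proof of this lemma: it is quoted verbatim from Koiso--Sakane \cite{KS1986}, whose own derivation is a local-coordinate computation. They take holomorphic coordinates $z_0,\dots ,z_{n-1}$ with $\frac{\partial}{\partial z_0}=H-\sqrt{-1}S$, write out the components $\tilde g_{0\bar 0}=2u^2$, $\tilde g_{\alpha\bar 0}=2u\,\partial_\alpha t$, $\tilde g_{\alpha\bar\beta}=g_{t\,\alpha\bar\beta}+2\partial_\alpha t\,\partial_{\bar\beta}t$ (these are exactly the formulas recorded in the next lemma of the paper), and impose the K\"ahler symmetry $\partial_\gamma\tilde g_{\alpha\bar\beta}=\partial_\alpha\tilde g_{\gamma\bar\beta}$ to extract $d_M\omega_t=0$ together with the evolution of $g_t$ in $t$. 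Your route is the coordinate-free version of the same computation: decompose $d\tilde\omega$ by vertical degree with respect to the Chern-connection splitting and read off the two conditions. The gain is transparency of the "if and only if" (the two summands of $d\tilde\omega$ visibly cannot cancel each other); the cost is that the two nontrivial normalizations are pushed into your "key identity". Both are checkable and neither is a gap, but to close them you should record: (i) writing $\rho=\log|v|_h^2$ and $\eta=dt\circ\tilde J=t'(\rho)\,\sqrt{-1}(\partial-\bar\partial)\rho$, the horizontal $(1,1)$-part of $d\eta$ is $-2\sqrt{-1}\,t'(\rho)\,\partial\bar\partial\log|e|_h^2$, and the identification of the prefactor $2t'(\rho)$ with $u(t)$ is forced by $u^2=\tilde g(H,H)=dt(H)^2$, which uses that $H$ is vertical, $d\rho(H)=2$, and $dt(S)=0$ by circle-invariance of $t$; and (ii) the sign: with the fundamental-form convention $\tilde\omega(X,Y)=\tilde g(\tilde JX,Y)$ the fiber contribution is $\eta\wedge dt=-dt\wedge\eta$ rather than $+dt\wedge\eta$, and with $\Theta(L)_{\alpha\bar\beta}=-\partial_\alpha\partial_{\bar\beta}\log|e|_h^2$ (the convention the paper uses, since it yields $\Theta(H^{-k})=-\tfrac{k}{n}g_{i\bar j}$) the ODE becomes $\partial_t\omega_t=-u\,\Theta(L)$ and integrates precisely to $g_t=g_0-U\,\Theta(L)$ with $U(0)=0$.
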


\begin{assumption} \label{assump 1}
We further assume the eigenvalues of the curvature $\Theta(L)$  with respect to $g_{0}$ are constant on M.
\end{assumption}

Let $z_{1}\cdots z_{n-1}$ be local holomorphic coordinates on $M$
and $z_{0}\cdots z_{n-1}$ be local coordinates on $L^{\ast}$ such
that $\frac{\partial}{\partial z_{0}}=H-\sqrt{-1}S$.

\begin{lemma}[\cite{KS1986}]
$\tilde{g}_{0\bar 0}=2u^2,\
 \tilde{g}_{\alpha \bar 0}=2u\partial_{\alpha} t,\ \tilde{g}_{\alpha
\bar{\beta}}=g_{t \alpha \bar{\beta}}+2\partial_{\alpha}
t\partial_{\bar{\beta}} t.$ Define $p=det(g_{0}^{-1}\cdot g_{t})$,
then $det(\tilde{g})=2u^2 \cdot p \cdot det(g_{0})$.
\end{lemma}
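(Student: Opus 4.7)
The plan is a direct block computation in the adapted complex coordinates $(z_0, z_1, \ldots, z_{n-1})$ on $L^*$, combining the four given inputs: the convention $\tilde g_{\alpha\bar\beta} = \tilde g(\partial_{z_\alpha}, \partial_{\bar z_\beta})$ from the complex bilinear extension of $\tilde g$, the identification $\partial/\partial z_0 = H - \sqrt{-1}\,S$ (so $\partial/\partial \bar z_0 = H + \sqrt{-1}\,S$), the definition $\tilde g = \pi^* g_t + dt^2 + (dt\circ \tilde J)^2$, and the hypothesis that $t$ depends only on the Hermitian fiber norm.

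First I would record two scalar identities about the real vector fields $H$ and $S$ that will be used repeatedly. Since the $S^1$-action generated by $S$ preserves the fiber norm and $t$ is a function of that norm, $St = 0$. Since $H$ and $S = \tilde J H$ are both vertical, $\pi^* g_t(H, H) = 0$ and $dt(S) = 0$, so the defining formula $u^2 = \tilde g(H,H)$ reduces to $u^2 = (dt(H))^2$; fixing the natural sign, $dt(H) = u$. Consequently $dt(\partial/\partial z_0) = dt(H) - \sqrt{-1}\,dt(S) = u$ and likewise $dt(\partial/\partial\bar z_0) = u$.

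Next I would verify the three entry formulas. For $\tilde g_{0\bar 0}$, complex bilinear extension gives $\tilde g(H-\sqrt{-1}\,S,\ H+\sqrt{-1}\,S) = \tilde g(H,H) + \tilde g(S,S) + \sqrt{-1}\,[\tilde g(H,S) - \tilde g(S,H)] = 2u^2$, using $\tilde J$-invariance of $\tilde g$ to get $\tilde g(S,S) = u^2$ and symmetry of $\tilde g$ to kill the cross terms. For $\tilde g_{\alpha \bar 0}$ with $\alpha \geq 1$, the $\pi^* g_t$ piece vanishes because $\pi_* (\partial/\partial \bar z_0) = 0$; using $\tilde J \partial_{z_\alpha} = \sqrt{-1}\,\partial_{z_\alpha}$ and $\tilde J \partial_{\bar z_0} = -\sqrt{-1}\,\partial_{\bar z_0}$, the $(dt)^2$ and $(dt\circ \tilde J)^2$ pieces each contribute $u\,\partial_\alpha t$ for a total of $2u\,\partial_\alpha t$. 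The same calculation for $\tilde g_{\alpha\bar\beta}$ yields $g_{t\,\alpha\bar\beta} + 2\partial_\alpha t\, \partial_{\bar\beta} t$.

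Finally, for the determinant I would apply the Schur complement formula $\det\begin{pmatrix} A & B \\ C & D \end{pmatrix} = A\cdot \det(D - C A^{-1} B)$ with $A = 2u^2$, $B_\beta = 2u\,\partial_{\bar\beta} t$, $C_\alpha = 2u\,\partial_\alpha t$, and $D_{\alpha\bar\beta} = g_{t\,\alpha\bar\beta} + 2\partial_\alpha t\, \partial_{\bar\beta} t$. The rank-one correction $C A^{-1} B$ equals exactly $2\partial_\alpha t\,\partial_{\bar\beta} t$, cancelling the rank-one addition to $g_t$, so the Schur complement is $g_t$ itself and $\det \tilde g = 2u^2 \det g_t = 2u^2\, p\, \det g_0$. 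There is no substantive obstacle here; the only subtlety is keeping the real-vs-complex conventions straight, especially the identity $dt(H) = u$, which encodes the fact that $t$ is an arclength parameter along the orbits of $H$.
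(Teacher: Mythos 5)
Your computation is correct and complete: the entry formulas follow exactly as you say from the complex-bilinear extension of $\tilde{g}$, the identities $dt(S)=0$ and $dt(H)=u$, and the relation $S=\tilde{J}H$, and the Schur-complement step cleanly cancels the rank-one term $2\partial_{\alpha}t\,\partial_{\bar{\beta}}t$ to give $\det\tilde{g}=2u^{2}\,p\,\det(g_{0})$. The paper itself states this lemma without proof, citing Koiso--Sakane, so there is no in-text argument to compare against; your direct block computation is the standard verification and fills that gap correctly.
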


\begin{lemma}[\cite{KS1986}]\label{Koiso local}
If we assume that $\partial_{\alpha} t=\partial_{\bar{\alpha}} t=0\
\ (1 \leqslant \alpha \leqslant n-1)$ on a fiber, and if a function
f  on $L^{\ast}$ depends only on $t$, then
$\partial_{0}\partial_{\bar{0}}f=u\frac{d}{dt}(u\frac{df}{dt}),\
\partial_{\alpha}\partial_{\bar{0}}f=0,\
\partial_{\alpha}\partial_{\bar{\beta}}f=-\frac{1}{2}u\frac{df}{dt} \Theta(L)_{\alpha
\bar{\beta}}$. Moreover, the Ricci curvature of $\tilde{g}$ becomes:
$\tilde{R}_{0\bar 0}=-u\cdot \frac{d}{dt}(u
\cdot\frac{d}{dt}(log(u^{2}p))),\ \tilde{R}_{\alpha \bar 0}=0, \ \
\tilde{R}_{\alpha \bar{\beta}}=R_{0\alpha
\bar{\beta}}+\frac{1}{2}u\cdot \frac{d}{dt}(log(u^{2}p))\cdot
\Theta(L)_{\alpha \beta}$.
\end{lemma}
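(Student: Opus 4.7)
The plan is to prove all statements of Lemma \ref{Koiso local} from a single unified source: the chain rule applied to a function of $t$, together with the K\"ahler identity $\partial_{i}\tilde g_{j\bar k} = \partial_{j}\tilde g_{i\bar k}$ applied to the explicit components of $\tilde g$ listed in the preceding lemma. All six formulas then follow once we compute the mixed second derivatives of $t$ itself at the reference point.

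First, for any $f=f(t)$ the chain rule gives
\[
\partial_{i}\partial_{\bar j} f = f''(t)\,\partial_{i}t\,\partial_{\bar j}t + f'(t)\,\partial_{i}\partial_{\bar j}t,
\]
so at the reference point (where $\partial_{\alpha}t=\partial_{\bar\alpha}t=0$) everything reduces to computing $\partial_{0}\partial_{\bar 0}t$, $\partial_{\alpha}\partial_{\bar 0}t$, and $\partial_{\alpha}\partial_{\bar\beta}t$. Since $t$ depends only on the Hermitian norm, we have $St=0$ and $Ht=u$; together with $\partial/\partial z_{0}=H-\sqrt{-1}S$ this yields $\partial_{0}t=u$ and, differentiating once more, $\partial_{0}\partial_{\bar 0}t=uu'(t)$. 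Inserting these into the chain rule produces the first assertion $\partial_{0}\partial_{\bar 0}f=u\tfrac{d}{dt}(u\tfrac{df}{dt})$.

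The other two pieces come from K\"ahlerness. Using $\tilde g_{0\bar 0}=2u^{2}$ and $\tilde g_{\alpha\bar 0}=2u\,\partial_{\alpha}t$, the identity $\partial_{\alpha}\tilde g_{0\bar 0}=\partial_{0}\tilde g_{\alpha\bar 0}$ collapses at the reference point to $\partial_{0}\partial_{\alpha}t=0$, and an analogous computation with the conjugate K\"ahler relation gives $\partial_{\alpha}\partial_{\bar 0}t=0$, proving the second formula. For the third, compare $\partial_{\alpha}\tilde g_{0\bar\beta}=\partial_{0}\tilde g_{\alpha\bar\beta}$ using $\tilde g_{\alpha\bar\beta}=g_{t,\alpha\bar\beta}+2\partial_{\alpha}t\,\partial_{\bar\beta}t$ and the crucial relation $g_{t}=g_{0}-U\Theta(L)$ from the first lemma. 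Because $dU/dt=u$ and $\partial_{0}t=u$, we get $\partial_{0}g_{t,\alpha\bar\beta}=-u^{2}\,\Theta(L)_{\alpha\bar\beta}$ at the reference point; matching this against $2u\,\partial_{\alpha}\partial_{\bar\beta}t$ (all cross terms vanish since $\partial_{\alpha}t=0$) yields $\partial_{\alpha}\partial_{\bar\beta}t=-\tfrac{1}{2}u\,\Theta(L)_{\alpha\bar\beta}$, which plugged into the chain rule gives the third assertion.

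For the Ricci curvature I would apply the standard K\"ahler formula $\tilde R_{i\bar j}=-\partial_{i}\partial_{\bar j}\log\det\tilde g$ to the determinant identity $\det\tilde g=2u^{2}p\cdot\det g_{0}$ from the preceding lemma. By Assumption \ref{assump 1}, the eigenvalues of $\Theta(L)$ with respect to $g_{0}$ are constant, so $p=\det(g_{0}^{-1}g_{t})=\prod_{i}(1-U\lambda_{i})$ is a function of $t$ alone, and hence so is $\log(u^{2}p)$. The chain-rule formulas just proved apply directly with $f=\log(u^{2}p)$ and give the $t$-only contribution: it produces $-u\tfrac{d}{dt}(u\tfrac{d}{dt}\log(u^{2}p))$ in the $0\bar 0$ slot, zero in the $\alpha\bar 0$ slot, and $+\tfrac{1}{2}u\tfrac{d}{dt}\log(u^{2}p)\,\Theta(L)_{\alpha\bar\beta}$ in the $\alpha\bar\beta$ slot. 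The leftover piece $-\partial_{i}\partial_{\bar j}\log\det g_{0}$ is pulled back from $M$, so it vanishes in the $0\bar 0$ and $\alpha\bar 0$ slots and contributes only the base Ricci term $R^{(0)}_{\alpha\bar\beta}$ in the $\alpha\bar\beta$ slot, completing the three Ricci identities.

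The main delicate point will be the K\"ahler argument producing $\partial_{\alpha}\partial_{\bar\beta}t=-\tfrac{1}{2}u\,\Theta(L)_{\alpha\bar\beta}$, where the fact that $\partial_{0}t=u$ (and not some other multiple) must be pinned down by the convention $\partial/\partial z_{0}=H-\sqrt{-1}S$ together with $Ht=u$, $St=0$. A minor care point is showing that the reference-point condition $\partial_{\alpha}t=0$ can actually be attained; this is arranged by choosing holomorphic coordinates on $M$ about the base point so that the local Hermitian potential of $(L,h)$ has vanishing first derivatives there, after which every formula in the statement is local and pointwise.
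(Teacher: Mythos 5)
The paper does not prove this lemma at all: it is quoted verbatim from Koiso--Sakane \cite{KS1986}, so there is no internal proof to compare against. Your derivation is essentially the standard one and is correct in substance: the chain rule reduces everything to the three second derivatives of $t$, the identity $Ht=u$, $St=0$ pins down $\partial_0 t=\partial_{\bar 0}t=u$, the K\"ahler identity $\partial_\alpha\tilde g_{0\bar\beta}=\partial_0\tilde g_{\alpha\bar\beta}$ together with $g_t=g_0-U\Theta(L)$ and $\partial_0 U=u^2$ gives $\partial_\alpha\partial_{\bar\beta}t=-\tfrac12 u\,\Theta(L)_{\alpha\bar\beta}$, and the Ricci formulas follow from $\tilde R_{i\bar j}=-\partial_i\partial_{\bar j}\log\det\tilde g$ once Assumption \ref{assump 1} guarantees that $p$ is a function of $t$ alone. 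One justification should be repaired: the ``conjugate K\"ahler relation'' does not produce the mixed derivative $\partial_\alpha\partial_{\bar 0}t$ --- permuting indices in $\partial_{\bar i}\tilde g_{j\bar k}=\partial_{\bar k}\tilde g_{j\bar i}$ only yields the pure second derivatives $\partial_0\partial_\alpha t=0$ and its conjugate. The correct (and shorter) argument is the one already implicit in your first paragraph: since $\partial_{\bar 0}t=u(t)$ holds identically on $L^\ast$, you get $\partial_\alpha\partial_{\bar 0}t=u'(t)\,\partial_\alpha t=0$ at the reference fiber. Similarly, the normalization $\partial_\alpha t=0$ along a fixed fiber is achieved by adjusting the holomorphic trivialization of $L$ (killing the first derivatives of the local weight of $h$ at the base point), not by a choice of coordinates on $M$; with these two small corrections the proof is complete.
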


It is convenient to introduce the new functions $\phi(U)=u^2(t)$ and
$ Q(U)=p$. Recall that $\sqrt{h}$ is the norm of Hermitian metric on
$L$. Since $\frac{dU}{\sqrt{\phi(U)}}=dt$ and $
\frac{dU}{\phi(U)}=\frac{d\sqrt{h}}{\sqrt{h}}$, for any given
$\phi(U)$ we can solve for $t$ with respect to $\sqrt{h}$, hence
recover the metric $\tilde{g}$. The following lemma characterizes
any $\phi(U)$ which corresponds to a well-defined (maybe incomplete)
K\"ahler metric in the form of (\ref{def of g}) on the total space
of $L^{\ast}$.

\begin{lemma} \label{well defined on star}

Given any hermitian line bundle $(L, h)$ over a compact K\"ahler
manifold $(M,g_0)$ and Assumption \ref{assump 1} holds. Fix $-\infty
< U_{min}<U_{max} \leq +\infty$ such that $g_t \doteq g_0-U
\Theta(L)$ remains positive on $(U_{min},U_{max})$.

Let $\phi(U)$ be a smooth positive function on $(U_{min},U_{max})$
with $\phi(U_{min})=\phi(U_{max})=0$. We further assume that
$\int_{U_{min}}^{U} \frac{dU}{\phi(U)}=+\infty$, $\int_{U}^{U_{max}}
\frac{dU}{\phi(U)}=+\infty$ and $\int_{U_{min}}^{U}
\frac{dU}{\sqrt{\phi(U)}}$ is finite for all $U \in
(U_{min},U_{max})$.

Then we can solve for $t$ as a function of $\sqrt{h}$ which is
strictly increasing on $\sqrt{h}$, and $t$ has the range
$(t_{min},t_{max})$ which contains $0$. Therefore we can get a
well-defined smooth K\"{a}hler metric $\tilde{g}$ in the form of
(\ref{def of g}) on $L^{\ast}$.
\end{lemma}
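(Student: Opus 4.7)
The plan is to construct the function $t$ of $\sqrt{h}$ explicitly by integrating the two ODEs noted immediately before the lemma, namely $dt = dU/\sqrt{\phi(U)}$ and $d(\log\sqrt{h}) = dU/\phi(U)$. Under the given integrability assumptions these integrate to smooth strictly-increasing bijections from $(U_{min},U_{max})$ onto the right intervals, and once $t$ is realized as a smooth function of $\sqrt{h}$ on $L^*$, formula \eqref{def of g} defines the metric and the Koiso--Sakane lemma makes it K\"ahler automatically.

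First I would fix a base point $U_0 \in (U_{min},U_{max})$ (taking $U_0 = 0$ when $0\in(U_{min},U_{max})$, to match the convention $t=0 \Leftrightarrow U=0$) and set
\[
s(U) \;=\; \int_{U_0}^{U} \frac{dV}{\phi(V)}, \qquad t(U) \;=\; \int_{U_0}^{U} \frac{dV}{\sqrt{\phi(V)}}.
\]
Both integrals are smooth and strictly increasing in $U$ since $\phi>0$ on $(U_{min},U_{max})$. The two divergence hypotheses $\int_{U_{min}}^{U_0}dV/\phi = \int_{U_0}^{U_{max}}dV/\phi = +\infty$ force $s:(U_{min},U_{max})\to(-\infty,+\infty)$ to be a bijection; identifying $s$ with $\log\sqrt{h}$, as dictated by $d\sqrt{h}/\sqrt{h} = dU/\phi$, gives a bijective correspondence between $U\in(U_{min},U_{max})$ and $\sqrt{h}\in(0,+\infty)$, so inverting yields $U$ as a smooth function of $\sqrt{h}$ on all of $L^*$. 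The finiteness hypothesis on $\int_{U_{min}}^{U}dV/\sqrt{\phi(V)}$ (and the analogous one at $U_{max}$, automatic on any compact subinterval where $\phi$ is bounded below) shows that $t$ takes values in a finite interval $(t_{min},t_{max})\ni 0$, and viewed as a function of $\sqrt{h}$ it is smooth and strictly increasing on $(0,+\infty)$.

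With $t$ pulled back to $L^*$ as a smooth function of the norm, I would substitute into \eqref{def of g} to define $\tilde g = \pi^* g_t + dt^2 + (dt\circ \tilde J)^2$. The positivity of $g_t = g_0 - U\,\Theta(L)$ for each $U\in(U_{min},U_{max})$ is exactly our hypothesis on the interval of admissible $U$-values; the fiber part $dt^2 + (dt\circ\tilde J)^2$ is positive because $dt\ne 0$ whenever $\sqrt{\phi(U)}>0$; and orthogonality of base and fiber directions then gives positive-definiteness. Smoothness is manifest at every stage, and the K\"ahler condition is delivered for free by the earlier Koiso--Sakane lemma, which applies precisely when $g_t = g_0 - U\,\Theta(L)$. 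I do not anticipate a genuine obstacle: the whole statement is a bookkeeping lemma for the ODE relations underlying Calabi's ansatz, with the real tensorial work already done in \cite{KS1986}. The only point meriting care is verifying that the divergence of $\int dU/\phi$ at both endpoints is exactly what pushes the image of $\sqrt{h}$ out to $0$ and to $+\infty$, so that the resulting metric is defined on the whole punctured line bundle rather than a proper sub-annulus, and that finiteness of $\int dU/\sqrt\phi$ is what keeps $t$ itself bounded at the endpoints so the normalization $t=0\leftrightarrow U=0$ can be arranged.
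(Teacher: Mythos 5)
Your proposal is correct and follows essentially the same route the paper intends: the paper gives no separate proof of this lemma, but the relations $dt=dU/\sqrt{\phi(U)}$ and $d\sqrt{h}/\sqrt{h}=dU/\phi(U)$ stated just before it are integrated exactly as you describe, with the divergence of $\int dU/\phi$ at both endpoints forcing $\sqrt{h}$ to range over all of $(0,+\infty)$ and the earlier Koiso--Sakane lemma supplying the K\"ahler condition. One cosmetic remark: finiteness of the $t$-integral at $U_{max}$ is neither hypothesized nor needed (so $t_{max}$ may be $+\infty$, as in the noncompact solitons later in the paper), and your parenthetical about the upper endpoint can simply be dropped.
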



\subsubsection{Metric completion by compactification, K\"ahler
metrics on ${\mathbb P}(L \oplus 1)$.}

Koiso-Sakane \cite{KS1986} had a general discussion on when the
K\"{a}hler metric $\tilde{g}$ on $L^{\ast}$ admits a
compactification so that it can be extended onto ${\mathbb P}(L
\oplus 1)$. We summarize their results below.

\begin{lemma} [Koiso-Sakane \cite{KS1986}] \label{basic compactification}
Let $(t_{min}, t_{max})$ be the range of function $t$ on $L^{\ast}$,
and assume that $t$ extends to ${\mathbb P}(L \oplus 1)$ with the
range $[t_{min}, t_{max}]$, where the subset $M_{min}$ (or
$M_{max}$) defined by $t=t_{min}$ (or $t=t_{min}$) is a complex
submanifold with codimension $D_{min}$ or $D_{max}$. Moreover,
assume the K\"ahler metric $\tilde{g}$ extends to ${\mathbb P}(L
\oplus 1)$, which is also denoted by $\tilde{g}$.

Then it implies that near $U=U_{min}$ the Taylor expansion of
$\phi(U)$ has the first term $2(U-U_{min})$, and near $U=U_{max}$,
it has the first term $2(U_{max}-U)$. In other words, $t-t_{min}$
gives the distance from $M_{min}$ to points in ${\mathbb P}(L \oplus
1)$ and from $M_{max}$ in ${\mathbb P}(L \oplus 1)$, and $t_{max}-t$
the distance from $M_{min}$.
\end{lemma}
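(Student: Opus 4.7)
The plan is to use the $\mathbb S^{1}$-symmetry generated by $S=\tilde JH$ to reduce the problem to a one-variable smoothness question for a rotationally symmetric surface metric at the origin, extract the Taylor data of $\phi$ at $U_{\min}$ (and $U_{\max}$) from the classical smoothness criterion, and finally deduce the distance statement from the Gauss lemma applied to the unit normal field $\nabla t$.

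First I would introduce fibrewise coordinates $(t,\theta)$ along each $\mathbb C^{\ast}$-fibre, where $\theta$ is the angular coordinate along the $\mathbb S^{1}$-orbit. Because $\tilde g(H,H)=\tilde g(S,S)=u(t)^{2}=\phi(U)$ with $H\perp S$, the metric (\ref{def of g}) becomes
\begin{equation*}
\tilde g=\pi^{\ast}g_{t}+dt^{2}+\phi(U)\,d\theta^{2},\qquad g_{t}=g_{0}-U\,\Theta(L).
\end{equation*}
Under Assumption~\ref{assump 1}, $g_{t}$ is smooth and nondegenerate as $U\to U_{\min}$, so any nonsmooth behaviour of $\tilde g$ along $M_{\min}$ is concentrated in the transverse disk $dt^{2}+\phi(U)\,d\theta^{2}$. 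Invoking the standard criterion that a rotationally symmetric 2D metric $dr^{2}+f(r)^{2}d\theta^{2}$ (with $\theta\in\mathbb R/2\pi\mathbb Z$) extends smoothly across $r=0$ if and only if $f$ is odd in $r$ with $f'(0)=1$, applied to $r=t-t_{\min}$ and $f(r)=\sqrt{\phi(U(t))}$, the smooth extension of $\tilde g$ to $\mathbb P(L\oplus 1)$ forces $\phi(U(t))=(t-t_{\min})^{2}+O\bigl((t-t_{\min})^{4}\bigr)$. Combining with the defining ODE $dU/dt=\sqrt{\phi}$ and integrating, one obtains $U-U_{\min}=\tfrac{1}{2}(t-t_{\min})^{2}+O\bigl((t-t_{\min})^{4}\bigr)$, and inversion yields $\phi(U)=2(U-U_{\min})+O\bigl((U-U_{\min})^{2}\bigr)$. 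The same analysis near $M_{\max}$, with the orientation of $t$ reversed, gives $\phi(U)=2(U_{\max}-U)+O\bigl((U_{\max}-U)^{2}\bigr)$.

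For the distance assertion, the block form $\tilde g=dt^{2}+(\text{terms orthogonal to }\partial_{t})$ makes $\nabla t$ a unit vector field everywhere perpendicular to the hypersurfaces $\{t=\mathrm{const}\}$; hence $\nabla_{\nabla t}\nabla t=0$, so $\nabla t$ is geodesic. Since $t-t_{\min}$ is smooth on $\mathbb P(L\oplus 1)$, vanishes on $M_{\min}$ and has unit $\tilde g$-gradient, the Gauss lemma identifies it with the Riemannian distance to $M_{\min}$ on the tubular neighborhood reaching out to the focal locus $M_{\max}$, and symmetrically for $t_{\max}-t$. The main obstacle I anticipate is making the ``rotationally symmetric transverse disk'' picture rigorous: regarding $L$ as the normal bundle of $M_{\min}$ inside $\mathbb P(L\oplus 1)$, one must check that $(t-t_{\min})\cos\theta$ and $(t-t_{\min})\sin\theta$ really serve as smooth local coordinates on the fibers, so that the one-variable odd-function condition on $\sqrt{\phi(U(t))}$ genuinely captures the smoothness of the full $\tilde g$ and not just of its $U(1)$-invariant part.
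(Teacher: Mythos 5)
The paper itself gives no proof of Lemma \ref{basic compactification}: it is imported verbatim from Koiso--Sakane \cite{KS1986} as part of a summary of their compactification results, so there is no in-paper argument to compare against. Your reconstruction is correct and is essentially the argument underlying the cited result: the fibrewise part of (\ref{def of g}) is the rotationally symmetric metric $dt^{2}+\phi(U(t))\,d\theta^{2}$ (there are indeed no cross terms, since $dt$ and $dt\circ\tilde J$ both annihilate the Chern-horizontal distribution), smoothness of the extension across the zero section forces $\sqrt{\phi(U(t))}=(t-t_{\min})+O\bigl((t-t_{\min})^{3}\bigr)$ because the $\mathbb S^{1}$-orbits have period $2\pi$ and length $2\pi\sqrt{\phi}$, and the substitution $dU=\sqrt{\phi}\,dt$ turns this into $\phi(U)=2(U-U_{\min})+O\bigl((U-U_{\min})^{2}\bigr)$; the distance statement follows from $|\nabla t|\equiv 1$ exactly as you say. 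The worry you raise at the end is easily discharged: the Hermitian norm $\rho=\sqrt{h}$ of the fibre coordinate satisfies $d\rho/\rho=dU/\phi$, so $\phi\sim 2(U-U_{\min})$ gives $\rho\asymp t-t_{\min}$ near $M_{\min}$, and hence the polar-coordinate smoothness criterion is being applied in the correct smooth structure on ${\mathbb P}(L\oplus 1)$. The one limitation to note is that your transverse-disk model only treats $D_{\min}=D_{\max}=1$, which is all that occurs for ${\mathbb P}(L\oplus 1)$ (both $M_{\min}$ and $M_{\max}$ are sections); in the more general Koiso--Sakane setting of higher-codimension degeneration the conclusion $\phi'(U_{\min})=2$ persists, but the smoothness analysis must also track the collapse of $g_{t}=g_{0}-U\,\Theta(L)$, which your argument does not address.
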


A standard example which satisfies the assumptions of Lemma
\ref{basic compactification} is Hirzebruch manifold $M_{n,k}$.
Indeed We may view any $M_{n,k}$ as the compactification of the
total space of $\mathbb{C}^{\ast}$-bundle induced from $k$-th power
of the tautological bundle $H^{-k} \rightarrow \mathbb{CP}^{n-1}$.
Here we assume the base $\mathbb{CP}^{n-1}$ is endowed with the
Fubini-Study metric $Ric(g_0)=g_0$, hence $R_{i \bar{j} k \bar{l}}
(g_0)=\frac{1}{n}(g_{i \bar{j}}g_{k \bar{l}}+g_{k \bar{j}}g_{i
\bar{l}})$. Then from the previous results due to Koiso-Sakane we
get \[ \tilde{g}_{i\overline{j}}=(1+\frac{k}{n}U)g_{i \overline{j}},
\ \ \Theta(H^{-k})=-\frac{k}{n}g_{i\overline{j}},  \ \ t_{i
\overline{j}}=\frac{k}{2n}ug_{i\overline{j}}. \] Here we need to
pick $[U_{min}, U_{max}]$ such that $u_{min}>-\frac{k}{n}$ and
$U_{max}<\infty$.

From now on, we will focus our consideration on $M_{n,k}$. But
before that, let us remark that there are other types of
compactification covered in Lemma \ref{basic compactification}. For
example, consider the tautological bundle $H^{-1} \rightarrow
\mathbb{CP}^{n-1}$, if we pick $U_{min}=-n$ and $U_{max}<\infty$,
the corresponding compactification will produce a K\"ahler metric on
$\mathbb{CP}^n$. Also it is clear that a similar discussion can be
carried out on $C^{\ast}$-bundles obtained from a vector bundle
other than a line bundle.

The following proposition gives the formulas of curvature tensors of
$M_{n,k}$. Note that $U(n)$ acts isometrically on the base
$\mathbb{CP}^{n-1}$, and it can be lifted as isometric actions on
the total space $M_{n,k}$, so the local calculation along a fiber in
Lemma \ref{Koiso local} works on the total space $M_{n,k}$.

\begin{proposition}[Curvature tensors of $\tilde{g}$ on $M_{n,k}$]
\label{curvature tensors} Let us assume that $\partial_{\alpha}
t=\partial_{\overline{\alpha}} t=0\ \ (1 \leqslant \alpha \leqslant
n-1)$ on a fiber. Consider the unitary frame $\{e_0, e_1, \cdots ,
e_{n-1}\}$ on $M_{n,k}$:
\[
e_0=\frac{1}{2\phi} \frac{\partial}{\partial z_0}, \
e_i=\frac{1}{\sqrt{(1+\frac{k}{n}U)g_{i\bar{i}}}}
\frac{\partial}{\partial z_i}\,\, (1\leq i\leq n-1),
\] where $\{e_1, \cdots, e_{n-1}\}$ is a unitary frame on
$(\mathbb{CP}^{n-1},g_0)$. The only nonzero curvature components of
$\tilde{g}$ on $M_{n,k}$ are:
\begin{align*}
A&=\tilde{R}_{0 \bar{0}0 \bar{0}}=-\frac{1}{2} \frac{d^2
\phi}{dU^2},\\
B&=\tilde{R}_{0 \bar{0}i \bar{i}}=
\frac{k^2 \phi-k(n+kU)\frac{d\phi}{dU}}{2(n+kU)^2},\\
C&=\tilde{R}_{i \bar{i}i \bar{i}}=2\tilde{R}_{i \bar{i}j
\bar{j}}=\frac{[2(n+kU)-k^2\phi]}{(n+kU)^2}.
\end{align*}
where $1\leq i, j\leq n-1$ and $i\neq j$.
\end{proposition}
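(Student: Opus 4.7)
The plan is to compute the curvature components in local holomorphic coordinates at a single reference point $p$ and then transport to the unitary frame, using the $U(n)$-isometric action to reduce everything to a convenient base point. Fix holomorphic coordinates $(z_0,z_1,\ldots,z_{n-1})$ with $\partial/\partial z_0 = H-\sqrt{-1}S$ (so Lemma~\ref{Koiso local} applies), with Kähler normal coordinates for $(\mathbb{CP}^{n-1},g_0)$ at the base point of $p$ (i.e. $g_{0,\alpha\bar\beta}=\delta_{\alpha\beta}$ and $\partial_\gamma g_{0,\alpha\bar\beta}=0$ there), and with a local trivialization of $H^{-k}$ whose Hermitian potential $\varphi$ satisfies $\partial_\alpha\varphi=\partial_\alpha\partial_\beta\varphi=0$ at $p$. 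These choices enforce the hypothesis $\partial_\alpha t=0$ of Lemma~\ref{Koiso local}, together with $\partial_\alpha\partial_\beta t=0$ and $\partial_\alpha\partial_{\bar\beta}t=\frac{ku}{2n}\delta_{\alpha\beta}$ at $p$, obtained from $\partial_\alpha\partial_{\bar\beta}s=-\tfrac12\Theta(L)_{\alpha\bar\beta}$ via the chain rule with $s=\log\sqrt{h}$.

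With these normalizations $\tilde g$ is diagonal at $p$ and one applies the standard Kähler curvature formula
\[
R_{i\bar j k\bar l}=-\partial_k\partial_{\bar l}\tilde g_{i\bar j}+\tilde g^{p\bar q}(\partial_k\tilde g_{i\bar q})(\partial_{\bar l}\tilde g_{p\bar j})
\]
component by component. For any $f=f(t)$ Lemma~\ref{Koiso local} gives $\partial_0 f=uf'_t$, $\partial_0\partial_{\bar 0}f=u\frac{d}{dt}(uf'_t)$, and $\partial_\alpha\partial_{\bar\alpha}f=-\tfrac{u}{2}f'_t\Theta(L)_{\alpha\bar\alpha}$; rewriting with $\frac{d}{dt}=\sqrt{\phi}\,\frac{d}{dU}$ and using $\Theta(H^{-k})_{\alpha\bar\beta}=-\tfrac{k}{n}\delta_{\alpha\beta}$ yields $\partial_0\partial_{\bar 0}(2\phi)=2\phi[\phi\phi''+(\phi')^2]$ and $\partial_\alpha\partial_{\bar\alpha}(2\phi)=k\phi\phi'/n$. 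For $A$, the only nonzero inverse-metric contribution at $p$ is $\tilde g^{0\bar 0}|\partial_0\tilde g_{0\bar 0}|^2=2\phi(\phi')^2$, giving $R_{0\bar 0 0\bar 0}=-2\phi^2\phi''$; the unit-frame normalization $A=R_{0\bar 0 0\bar 0}/\tilde g_{0\bar 0}^{\,2}$ then yields $A=-\tfrac12\phi''$.

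For $B=\tilde R_{0\bar 0 i\bar i}$ the key new piece is the cross term $\tilde g^{i\bar i}|\partial_i\tilde g_{0\bar i}|^2$. From $\tilde g_{i\bar 0}=2u\,\partial_i t$ one finds at $p$ that $\partial_i\tilde g_{0\bar i}=2u\,\partial_i\partial_{\bar i}t=k\phi/n$, giving cross contribution $k^2\phi^2/[n(n+kU)]$; combined with $-\partial_i\partial_{\bar i}(2\phi)=-k\phi\phi'/n$ and normalized by $\tilde g_{0\bar 0}\tilde g_{i\bar i}$ this produces the stated formula for $B$.

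The formula for $C$ is the most delicate. One expands $\tilde g_{i\bar i}=(1+kU/n)g_{0,i\bar i}+2|\partial_it|^2$ and computes $\partial_j\partial_{\bar j}\tilde g_{i\bar i}$ at $p$. The first summand contributes $\tfrac{k^2\phi}{2n^2}\delta_{ij}$ (from $\partial_j\partial_{\bar j}$ acting on $1+kU/n$) together with $(1+kU/n)(-R_{i\bar i j\bar j}(g_0))$, and here the Fubini-Study curvature $R_{i\bar jk\bar l}(g_0)=\tfrac{1}{n}(\delta_{ij}\delta_{kl}+\delta_{il}\delta_{kj})$ enters. The second summand contributes $2|\partial_i\partial_{\bar j}t|^2=\tfrac{k^2\phi}{2n^2}\delta_{ij}$; the pure second-derivative terms $\partial_i^2 t$ vanish by the choice of potential. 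All cross terms in the curvature formula vanish at $p$ because $\partial_j\tilde g_{i\bar q}=0$ there for every $q$. Collecting and normalizing by $\tilde g_{i\bar i}^{\,2}$ yields both stated formulas for $\tilde R_{i\bar i i\bar i}$ and $\tilde R_{i\bar i j\bar j}$. The main obstacle is purely organizational: tracking which derivatives vanish at $p$, and carefully separating the $\partial\partial\tilde g$ contributions from the $\partial\tilde g\cdot\partial\tilde g$ corrections throughout $A$, $B$, and $C$.
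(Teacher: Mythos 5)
Your strategy is exactly the one the paper uses (the paper itself only invokes the $U(n)$-symmetry and the Koiso--Sakane local formulas of Lemma \ref{Koiso local} and then computes): normalize everything at a reference point on a fiber, apply the standard K\"ahler curvature formula, and keep track of which first derivatives vanish. Your computations of $A$ and $B$ are correct, including the identification of the single surviving cross term $\tilde g^{i\bar i}|\partial_i\tilde g_{0\bar i}|^2=k^2\phi^2/[n(n+kU)]$ that produces the $k^2\phi$ in the numerator of $B$.

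There is, however, one concrete slip in the computation of $C$, and it matters for the off-diagonal component. In $\partial_j\partial_{\bar j}\bigl[(1+\tfrac{k}{n}U)g_{0,i\bar i}\bigr]$ the piece coming from $\partial_j\partial_{\bar j}$ hitting $1+\tfrac{k}{n}U$ equals
\[
\Bigl[-\tfrac12\,u\,\tfrac{k}{n}u\,\Theta(L)_{j\bar j}\Bigr]g_{0,i\bar i}
=\frac{k^2\phi}{2n^2}\,g_{0,j\bar j}\,g_{0,i\bar i}=\frac{k^2\phi}{2n^2}
\]
for \emph{all} pairs $i,j$ --- there is no Kronecker delta here. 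The $\delta_{ij}$ belongs only to the other term, $2|\partial_i\partial_{\bar j}t|^2=\tfrac{k^2\phi}{2n^2}\delta_{ij}$, coming from the summand $2\,\partial_i t\,\partial_{\bar i}t$. With the $\delta_{ij}$ you attached to the first piece, the case $i\neq j$ loses a $-\tfrac{k^2\phi}{2n^2}$ contribution, and after normalization you would get $\tilde R_{i\bar i j\bar j}=\tfrac{1}{n+kU}$ instead of the stated $\tfrac12 C=\tfrac{(n+kU)-k^2\phi/2}{(n+kU)^2}$; in particular the identity $\tilde R_{i\bar i i\bar i}=2\tilde R_{i\bar i j\bar j}$ would fail. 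The diagonal case $i=j$ is unaffected since both deltas equal $1$ there. Once corrected, the collection step reads $\tilde R_{j\bar j i\bar i}=-\tfrac{k^2\phi}{2n^2}+\tfrac{(n+kU)(1+\delta_{ij})}{n^2}-\tfrac{k^2\phi}{2n^2}\delta_{ij}$, which normalizes to exactly the stated $C$ and $C/2$. Everything else in your argument is sound.
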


Proposition \ref{curvature tensors} leads to the following
characterization of $U(n)$-invariant K\"ahler metrics with $H>0$ on
$M_{n,k}$.

\begin{proposition}[The generating function $\phi$]
Any $U(n)$-invariant K\"ahler metric on $M_{n,k}$ has positive
holomorphic sectional curvature if and only if
\begin{equation}
A>0, C>0, 2B>-\sqrt{AC}.
\end{equation}
In other words, it is characterized by a smooth concave function
$\phi(U)$ where $-\infty<U_{min} \leq U \leq U_{max}<+\infty$ with
$1+\frac{k}{n}U_{min}>0$ such that the following conditions hold:

(1)  $\phi>0$ on $(U_{min}, U_{max})$,
$\phi(U_{min})=\phi(U_{max})=0$, $\phi'(U_{min})=2$, and
$\phi'(U_{max})=-2$.

(2) \[ \phi(U)<\frac{2}{k^2}(n+kU), \ \
\frac{k\phi}{n+kU}-\phi'>-\sqrt{\phi^{\prime\prime}
\big[\frac{\phi}{2}-\frac{1}{k^2}(n+kU)\Big]}
\] for any $U \in [U_{min},U_{max}]$.
\end{proposition}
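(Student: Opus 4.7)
The plan is to compute the holomorphic sectional curvature $H(v)$ at an arbitrary point of $M_{n,k}$ in the unitary frame of Proposition \ref{curvature tensors}, reduce the positivity of $H$ to positivity of a binary quadratic form, and then translate the resulting inequalities on $A$, $B$, $C$ into the stated conditions on the generating function $\phi$. The endpoint data on $\phi$ will come directly from Lemma \ref{basic compactification}.

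First, I would fix a point and pick the unitary frame $\{e_0, e_1, \ldots, e_{n-1}\}$ of Proposition \ref{curvature tensors}, writing a unit $(1,0)$-tangent vector as $v = v^0 e_0 + w$ with $w = \sum_{i\geq 1} v^i e_i$ and setting $\xi = |v^0|^2$, $\eta = |w|^2 = 1-\xi$. The $U(n)$-invariance of the metric rules out all curvature components not listed in Proposition \ref{curvature tensors} — in particular, components of type $\tilde R_{0\bar j 0 \bar i}$ vanish because no $U(n-1)$-invariant $(0,2)$-form on the base exists — while the base directions carry a Fubini--Study-type tensor $\tilde R_{i\bar j k \bar l} = \tfrac{C}{2}(\delta_{ij}\delta_{kl} + \delta_{il}\delta_{kj})$. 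Contracting $R(v,\bar v,v,\bar v)$ and using K\"ahler symmetry to collapse the four equivalent mixed configurations into a single contribution $4B\xi\eta$, one obtains
\[
H(v) = A\,\xi^2 + 4B\,\xi\eta + C\,\eta^2.
\]
Positivity of this quadratic on the segment $\{\xi,\eta\ge 0,\ \xi+\eta=1\}$ forces $A>0$ at $\eta=0$ and $C>0$ at $\xi=0$; in the interior, setting $t=\eta/\xi\in(0,\infty)$, the expression $A+4Bt+Ct^2$ is automatically positive when $B\ge 0$, and when $B<0$ its minimum $(AC-4B^2)/C$ is positive exactly when $AC>4B^2$. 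The three cases consolidate into $A>0$, $C>0$, $2B>-\sqrt{AC}$.

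Substituting the expressions of Proposition \ref{curvature tensors}, $A>0$ is exactly the strict concavity $\phi''<0$ and $C>0$ is exactly $\phi<\tfrac{2}{k^2}(n+kU)$. Factoring the positive prefactor $k/(n+kU)$ out of both $2B$ and $\sqrt{AC}$ rearranges the third inequality to
\[
\frac{k\phi}{n+kU} - \phi' > -\sqrt{\phi''\Bigl(\tfrac{\phi}{2} - \tfrac{n+kU}{k^2}\Bigr)},
\]
where both factors inside the radical are nonpositive under the first two conditions, so the radicand is well-defined. Finally, the boundary data $\phi(U_{\min})=\phi(U_{\max})=0$, $\phi'(U_{\min})=2$, $\phi'(U_{\max})=-2$ is precisely what Lemma \ref{basic compactification} requires for $\tilde g$ to extend smoothly across the divisors $E_0$ and $E_\infty$, while concavity of $\phi$ together with Lemma \ref{well defined on star} keeps $g_t = g_0 - U\,\Theta(H^{-k})$ K\"ahler throughout $[U_{\min},U_{\max}]$, so $\tilde g$ really is a K\"ahler metric on $M_{n,k}$. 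The main obstacle is the first step: carefully accounting for all index configurations in the curvature contraction, in particular justifying the vanishing of the $\tilde R_{0\bar j 0 \bar i}$-type components via $U(n-1)$-invariance and verifying the combinatorial coefficient $4$ in front of $B\xi\eta$.
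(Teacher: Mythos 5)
Your proposal is correct and follows exactly the route the paper intends: the paper leaves this proposition as an immediate consequence of Proposition \ref{curvature tensors} and later uses the very same quadratic-form expression $H(X)=A|x_1|^4+4B|x_1x_2|^2+C|x_2|^4$ in its pinching computations, so your reduction to positivity of $A\xi^2+4B\xi\eta+C\eta^2$ on the simplex, the case split on the sign of $B$, and the translation into the three inequalities on $\phi$ are precisely the omitted argument. The endpoint data via Lemma \ref{basic compactification} and the coefficient $4$ in the mixed term are both handled correctly.
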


As a corollary of Proposition \ref{curvature tensors}, we also have
the following rough estimates on holomorphic pinching constants of
$U(n)$-invariant K\"ahler metrics on $M_{n,k}$.

\begin{corollary}[A rough estimate on holomorphic pinching constants]
Any $U(n)$-invariant K\"ahler metric on $M_{n,k}$ with $H>0$ have
its local holomorphic pinching constant bounded from above by
$\frac{1}{k^2}$, Fix $0<c<\frac{n}{k+2}$, if we consider any
$U(n)$-invariant K\"ahler metric with $H>0$ whose corresponding
K\"ahler class lies in the following ray $S$ in the K\"ahler cone
\[
S=\{\ b[E_{\infty}]-a[E_0] \ |\ \text{where}\ \
a=b\,\frac{n-kc}{n+kc} \ \}.
\]
Then its holomorphic pinching constant is bounded from above by
$\frac{2c}{n-kc}$.
\end{corollary}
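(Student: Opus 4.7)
The plan is to exhibit, for any $U(n)$-invariant K\"ahler metric on $M_{n,k}$ with $H>0$, a point at which the holomorphic sectional curvature in a base direction is small compared with the one in the fiber direction; since at every $p\in M_{n,k}$ the $J$-invariant $2$-planes along the fiber and along any base direction realize $H=A$ and $H=C$ respectively, any such comparison bounds the local pinching (and hence the global pinching) constant.

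For the first assertion, I would evaluate at a point of $E_0$, where $U=U_{\min}=-c$. The boundary data $\phi(-c)=0$ and $\phi'(-c)=2$ substituted into Proposition~\ref{curvature tensors} give
\[
B(-c) = -\frac{k}{n-kc}, \qquad C(-c) = \frac{2}{n-kc}.
\]
Passing the condition $2B>-\sqrt{AC}$ to the boundary by continuity yields $2B(-c)\geq -\sqrt{A(-c)C(-c)}$, which after squaring becomes $A(-c)\geq k^2 C(-c)$. Comparing the base and fiber $2$-planes at this point gives local pinching at most $C(-c)/A(-c)\leq 1/k^2$.

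For the second assertion, I would use the identity
\[
\int_{-c}^{c}A(U)\,dU = -\tfrac{1}{2}\bigl[\phi'(c)-\phi'(-c)\bigr] = 2,
\]
so that the mean value of $A$ on $[-c,c]$ equals $1/c$ and there exists $U^*\in[-c,c]$ with $A(U^*)\geq 1/c$. At this point $\phi\geq 0$ gives
\[
C(U^*) \leq \frac{2}{n+kU^*} \leq \frac{2}{n-kc},
\]
and the hypothesis $c<n/(k+2)$ is precisely what forces $2/(n-kc)<1/c$, so $C(U^*)<A(U^*)$. Comparing the base and fiber $2$-planes at this point bounds the local pinching by
\[
\frac{C(U^*)}{A(U^*)}\leq \frac{2/(n-kc)}{1/c}= \frac{2c}{n-kc}.
\]

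The only delicate point is the boundary evaluation at $U=-c$, where the coordinate vector $\partial/\partial z_0$ has vanishing length and the unitary frame from Proposition~\ref{curvature tensors} degenerates. This is handled by the smooth compactification of Lemma~\ref{basic compactification}: the Taylor expansion $\phi(U)=2(U+c)+O((U+c)^2)$ shows that $A$, $B$, $C$ extend continuously to the endpoints and that the ``fiber'' $2$-plane at $p\in E_0$ is to be interpreted as the $J$-invariant normal plane to $E_0$ in $M_{n,k}$, which is genuinely defined there. Once this is in place, the algebraic argument above applies without further difficulty.
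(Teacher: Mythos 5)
Your proof is correct and is precisely the argument the paper intends (the corollary is stated there without proof as a direct consequence of Proposition~\ref{curvature tensors}): the boundary identity $2B(U_{\min})=-k\,C(U_{\min})$ combined with the limiting inequality $2B\geq-\sqrt{AC}$ yields $A\geq k^{2}C$ along $E_{0}$, and the mean-value bound $\max A\geq 1/c$ from $\int_{-c}^{c}A\,dU=-\tfrac12[\phi'(c)-\phi'(-c)]=2$ together with $C\leq 2/(n-kc)$ yields the second estimate. Your treatment of the degeneration of the frame at $U=-c$ and of the normalization $[U_{\min},U_{\max}]=[-c,c]$ for a class in the ray $S$ matches the paper's conventions (and the bound $\tfrac{2c}{n-kc}$ depends only on the ratio $(n+kU_{\max})/(n+kU_{\min})$, hence is scale-invariant as it must be).
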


Another consequence of Proposition \ref{curvature tensors} is the
path-connectedness of K\"ahler metrics of $H>0$ in the same K\"ahler
class on $M_{n,k}$.

\begin{corollary}[A convexity property in a fixed K\"ahler class]
\label{convexity} If $\phi_1$ and $\phi_2$ are two generating
functions of two K\"ahler metrics of $H>0$ in the same K\"ahler
class on $M_{n,k}$, so is any convex combination
$t\phi_1+(1-t)\phi_2$ with $0<t<1$.
\end{corollary}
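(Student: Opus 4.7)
The plan is to reduce the corollary to a pointwise convexity assertion about the curvature triple $(A,B,C)$ attached to a generating function. By Proposition 3.7, a smooth function $\phi$ on $[U_{\min},U_{\max}]$ defines a $U(n)$-invariant K\"ahler metric of $H>0$ on $M_{n,k}$ in a prescribed K\"ahler class exactly when it satisfies the (linear) boundary conditions $\phi(U_{\min})=\phi(U_{\max})=0$, $\phi'(U_{\min})=2$, $\phi'(U_{\max})=-2$ together with $\phi>0$ on the open interval and the three pointwise inequalities $A>0$, $C>0$, $2B>-\sqrt{AC}$ from Proposition 3.6. Since $\phi_1$ and $\phi_2$ lie in the same K\"ahler class, they are attached to the same interval $[U_{\min},U_{\max}]$, so the boundary data and the positivity $\phi_t>0$ for $\phi_t=t\phi_1+(1-t)\phi_2$ are immediate from linearity.

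Next I would observe that the three curvature quantities
\[
A=-\tfrac12\phi'',\qquad B=\frac{k^2\phi-k(n+kU)\phi'}{2(n+kU)^2},\qquad C=\frac{2(n+kU)-k^2\phi}{(n+kU)^2}
\]
are affine-linear in the jet $(\phi,\phi',\phi'')$. Consequently, at every $U$ one has
\[
A_t=tA_1+(1-t)A_2,\quad B_t=tB_1+(1-t)B_2,\quad C_t=tC_1+(1-t)C_2.
\]
The inequalities $A_t>0$ and $C_t>0$ are then just convex combinations of strictly positive quantities, and in particular $\phi_t$ is strictly concave.

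The main (and only nontrivial) step is the third curvature condition $2B_t>-\sqrt{A_tC_t}$. Here I would exploit the concavity of the geometric mean on the positive quadrant: by the Cauchy–Schwarz inequality
\[
\bigl(tA_1+(1-t)A_2\bigr)\bigl(tC_1+(1-t)C_2\bigr)\ \ge\ \bigl(t\sqrt{A_1C_1}+(1-t)\sqrt{A_2C_2}\bigr)^2,
\]
so $\sqrt{A_tC_t}\ \ge\ t\sqrt{A_1C_1}+(1-t)\sqrt{A_2C_2}$ pointwise. Combined with the strict hypotheses $2B_i>-\sqrt{A_iC_i}$ for $i=1,2$, this gives
\[
2B_t\ =\ t(2B_1)+(1-t)(2B_2)\ >\ -t\sqrt{A_1C_1}-(1-t)\sqrt{A_2C_2}\ \ge\ -\sqrt{A_tC_t},
\]
which is the remaining inequality. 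I expected the nonlinearity of the constraint $2B>-\sqrt{AC}$ to be the main obstacle, but the concavity of $\sqrt{xy}$ absorbs it cleanly, so no further delicate estimate is required.
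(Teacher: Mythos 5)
Your proof is correct and is essentially the argument the paper intends: the corollary is stated there without proof as a direct consequence of the curvature formulas, and the two ingredients you identify — the affine dependence of $A$, $B$, $C$ (and of the boundary/compactification conditions) on the $2$-jet of $\phi$ over the common interval $[U_{\min},U_{\max}]$ fixed by the K\"ahler class, together with the superadditivity $\sqrt{A_tC_t}\ge t\sqrt{A_1C_1}+(1-t)\sqrt{A_2C_2}$ to absorb the one nonlinear constraint $2B>-\sqrt{AC}$ — are exactly what make it a one-line corollary of Proposition \ref{curvature tensors}. No gaps.
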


\subsubsection{Hitchin's examples reformulated}

Hitchin's construction gives a family of K\"ahler metrics with
$U(2)$ symmetry on $M_{2,k}$. We observe a similar construction
works for $M_{n,k}$.

\begin{example}[Hitchin's examples on $M_{n,k}$]
Given $s>0$, $U_{min}=0$, $U_{max}=ns$, define
$\phi_{s}(U)=-\frac{2}{ns} U^2+2U$. Now \[ A=\frac{2}{ns},\,\,
B=\frac{\frac{k^2}{ns}U^2+\frac{4k}{s}U-kn}{(n+kU)^2},\,\,
C=\frac{\frac{2k^2}{ns}U^2+(2k-2k^2)U+2n}{(n+kU)^2}.\]
\end{example}
For all $s>0$, $\phi_s$ gives a family of K\"ahler metric on
$M_{n,k}$. Now assume $n=2$, Hitchin proved that these metrics have
$H>0$ if $s$ suitably small. Indeed it is observed by
Alvarez-Chaturvedi-Heier \cite{ACH} that it suffices to assume
$0<s<\frac{1}{k^2}$ to get $H>0$. They also calculated the pinching
constants of these metrics and concluded that the optimal value is
$\frac{1}{(2k+1)^2}$ when $s=\frac{1}{2k^2+k}$. When $k=1$, the
optimal metric lies in the K\"ahler class
$\frac{4}{3}[D_{\infty}]-[D_0]$.

In fact, we observe that Hitchin's example is canonical in the
following sense.

\begin{proposition} Hitchin's examples can be uniquely characterized
as $U(n)$-invariant K\"ahler metrics on $M_{n,k}$ with the constant
radial curvature $A$. In particular, the following example gives the
unique form of $\phi(U)$ up to a scaling and a translation of
$[U_{min}, U_{max}]$.
\end{proposition}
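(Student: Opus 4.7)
The plan is to read off the structural equation from Proposition \ref{curvature tensors} and then solve it under the boundary conditions that the generating function $\phi$ must satisfy in order for the resulting metric to extend smoothly to $M_{n,k}$. The starting point is the formula
\[
A \;=\; \tilde{R}_{0\bar 0 0 \bar 0} \;=\; -\tfrac{1}{2}\,\phi''(U),
\]
so the hypothesis that $A$ is a positive constant translates literally into a second-order ODE $\phi''(U)\equiv -2A$ on the closed interval $[U_{\min},U_{\max}]$.

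First I would integrate this ODE twice, obtaining $\phi(U)=-A\,U^{2}+\beta U+\gamma$ for constants $\beta,\gamma$. Next I would impose the compactification conditions on $\phi$ from Lemma \ref{basic compactification}, namely $\phi(U_{\min})=\phi(U_{\max})=0$ together with the slopes $\phi'(U_{\min})=2$ and $\phi'(U_{\max})=-2$. The two vanishing conditions force $\phi(U)=-A(U-U_{\min})(U-U_{\max})$, while the boundary slope conditions give
\[
A\,(U_{\max}-U_{\min})\;=\;2,
\]
so that $A=2/(U_{\max}-U_{\min})$ is determined uniquely by the length of the interval. Thus the solution is pinned down once $(U_{\min},U_{\max})$ is chosen, and any two such generating functions differ at most by a rigid translation of the interval and an overall homothety of the base--fiber scales.

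Finally I would compare the canonical form to the Hitchin family. After translating so that $U_{\min}=0$ and writing $U_{\max}=ns$, the formula becomes $\phi(U)=-\tfrac{2}{ns}U^{2}+2U$, which is precisely $\phi_{s}(U)$ from the Example above. Conversely $\phi_{s}$ obviously has $\phi_{s}''\equiv -\tfrac{4}{ns}$, so its $A$-component is constant. Hence the characterization is sharp.

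The only thing that requires a small justification is that a \emph{$U(n)$-invariant} K\"ahler metric on $M_{n,k}$ must indeed arise from a generating function $\phi$ of a single variable $U$ in the Koiso--Sakane form, so that Proposition \ref{curvature tensors} applies; this is the content of Calabi's ansatz as reformulated in the previous subsection, and I would invoke it explicitly at the start. No serious obstacle is expected: once the equivalence "$U(n)$-invariant K\"ahler metric $\leftrightarrow$ admissible $\phi(U)$" is in hand, the remainder is a one-line integration followed by matching of boundary data.
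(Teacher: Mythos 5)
Your proposal is correct and follows exactly the argument the paper intends (the paper in fact gives no written proof, only the canonical-form example): constancy of $A=-\tfrac12\phi''$ forces $\phi$ to be quadratic, and the compactification conditions $\phi(U_{\min})=\phi(U_{\max})=0$, $\phi'(U_{\min})=2$, $\phi'(U_{\max})=-2$ pin it down to $\phi(U)=-A(U-U_{\min})(U-U_{\max})$ with $A=2/(U_{\max}-U_{\min})$, recovering $\phi_s$ and, after centering, $\phi_c(U)=c-U^2/c$. Your closing caveat about the equivalence between $U(n)$-invariant metrics and admissible generating functions $\phi(U)$ is the right thing to flag; the paper likewise takes this from the Calabi/Koiso--Sakane setup without further argument.
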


\begin{example}[Hitchin's example in a canonical form]
\label{other class}

Let $c>0$, $U_{min}=-c$, $U_{max}=c$, define
$\phi_c(x)=c-\frac{x^2}{c}$ on $[-c,c]$. Since
$\phi^{\prime}(-c)=2$, $\phi^{\prime}(c)=-2$, and $\phi(\pm c)=0$,
we have a K\"ahler metric on $M_{2,1}$. Now \[ A=\frac{1}{c},\,\,
B=\frac{\frac{1}{c}U^2+\frac{4}{c}U+c}{2(U+2)^2},\,\,
C=\frac{\frac{1}{c}U^2+2U+4-c}{(U+2)^2}.\]
\end{example}

If we assume  $0<c<2$, then obviously  $1-\frac{1}{2}U>0$, $A>0$, and $C>0$ on $[-c,c]$. Consider
\[
D \doteq
2B+\sqrt{AC}=\frac{\frac{1}{c}U^2+\frac{4}{c}U+c+\frac{1}{c}
\sqrt{U^2+2cU+c(4-c)}\, \cdot (U+2)}{(U+2)^2}.
\]
Then $D(-c)>0$ is equivalent to $c<\frac{2}{3}$. Moreover, one can check
that the numerator of $D(U)$ is increasing on $U \in (-c,c)$, hence
$D(U)>0$ for any $-c<U<c$ and $0<c<\frac{2}{3}$. Therefore $\phi_c$
provides a family of K\"ahler metrics of $H>0$.

Next let us find the pinching constant for
$\phi_{c}(x)=c-\frac{x^2}{c}$. For any given $\phi_{c}$, the expression of
the holomorphic sectional curvature is
\[
H(X)=A|x_1|^4+4B|x_1 x_2|^2+C |x_2|^4,
\] where $X=x_1 e_1+x_2 e_2$ with $|x_1|^2+|x_2|^2=1$.

If we set $t=|x_1|^2$, then $H(X)=(A+C-4B)t^2+t(4B-2C)+C$ with $t\in
[0,1]$. it is elementary to discuss its extremal values. In
particular, we will show that for any $c \in (0,\frac{2}{3})$, the
pinching constant $\inf_{U \in (-c,c)} \frac{\min H}{\max H}(U)$ is
always attained at $U=-c$, i.e. along the zero section of $M_{2,1}$.

Indeed
\[
\min_{||v||=1} H(U,v)=\frac{AC-4B^2}{(A+C-4B)}, \max_{||v||=1} H(U,
v)=A.
\]
Therefore, the local pinching constant equals
\[
\frac{AC-4B^2}{A(A+C-4B)}=\frac{2U^3+(6-3c)U^2-12cU-c^3-2c^2-8c}
{(2U-2-3c)(U+2)^2}.
\]

It is direct to check that the above expression is increasing on $U
\in [-c,c]$. If $U=-c$, it becomes $\frac{2c(c-3c)}{(2-c)(5c+2)^2}$,
When $c=\frac{2}{7} \approx 0.2857$, it attains the maximum
$\frac{1}{9}$. The optimal pinching constant among the family
$\phi_{c}$ agrees with the result of Alvarez-Chaturvedi-Heier
\cite{ACH}, and the corresponding optimal K\"ahler metrics are just
multiples of those in \cite{ACH}.


It is also straightforward to solve the optimal holomorphic pinching
constant of Hitchin's examples on any Hirzebruch manifold $M_{n,k}$.
Given any $1 \leq k<n$, pick $c>0$, $U_{min}=-c$, $U_{max}=c$,
define $\phi_{c}(U)=c-\frac{x^2}{c}$ on $[-c,c]$. We claim that $\phi_c$ gives a K\"ahler metric on $M_{n,k}$ with
$H>0$ as long as $c < \frac{n}{k(2k+1)}$. Note that
\[ A=\frac{1}{c},\,\,
B=\frac{\frac{k^2}{c}U^2+\frac{2nk}{c}U+c k^2}{2(n+kU)^2},\,\,
C=\frac{\frac{k^2}{c}U^2+2kU+2n-c k^2}{(n+kU)^2}.\]

Similarly
\[
D \doteq 2B+\sqrt{AC}=\frac{\frac{k^2}{c}U^2+\frac{2nk}{c}U+ck^2+
\frac{n+kU}{c}\sqrt{k^2U+2kcU+(2n-ck^2)c}}{(n+kU)^2}
\]

First note that $0<c < \frac{n}{k(2k+1)}$ is equivalent to
$D(-c)>0$, then similarly we could show under this condition on $c$
the numerator of $D$ is strictly increasing on $U \in (-c, c)$,
therefore $D(U)>0$ holds on $[-c,c]$.

Note that when $n\geq 4$, for any positive integer $k$ satisfying $k(2k+1) < n$,  we may pick $c=1$. In this case the K\"ahler class of the resulting metric is
proportional to the anti-canonical class.

For the above metric on $M_{n,k}$ given by $\phi_c (x) = c - \frac{x^2}{c}$ on $[-c,c]$, where $0<c<\frac{n}{k(2k+1)}$ is a constant, one can carry out a similar calculation to
conclude that maximal local pinching constant achieves its maximum
at $U=-c$, which is
\[
\frac{2c(n-c(2k^2+k))}{(n-ck)((3k+2)c+n)}.
\]
It can be shown that it obtains its maximum value at
$c=\frac{n}{4k^2+3k}$, and the optimal pinching constant is
$\frac{1}{(2k+1)^2}$. Note that the optimal pinching constant is
dimension free.

\vspace{0.3cm}

\subsection{New examples and the proof of Theorem \ref{main in intro}}

Next let us consider $M_{2,1}$ which is the only Fano Hirzebruch
surface. Note that K\"ahler metrics of Hitchin's examples on
$M_{2,1}$ can not be proportional to the anti-cananical class. A
natural question is whether there exists a K\"ahler metric with
$H>0$ in $2\pi c_1(M_{2,1})$. Note that the corresponding $\phi(U)$
of such a metric must satisfy $(2+U_{max})=3(2+U_{min})$ besides
$\phi(U_{min})=\phi(U_{max})=0$, $\phi'(U_{min})=2$, and
$\phi'(U_{max})=-2$ required by the smooth compactification.

In the following we exhibit such an example with different global
and local holomorphic pinching constants.

\begin{proposition}
[A new family of K\"ahler metrics of $H>0$ on $M_{2,1}$]
\label{canonical class}

Given any real number $0<c<\frac{6}{5}$, pick a real number $\mu \in
(\frac{1}{2}c, c)$, define $\phi_{c, \mu}: [-c, c] \rightarrow
\mathbb{R}$ by
\[
\phi_{c,\mu}(U)=\mu-(\frac{1}{c^3}-\frac{\mu}{c^4})
U^4-(\frac{2\mu}{c^2}-\frac{1}{c})U^2
\]
Thus $\phi_{c, \mu}$ determines a family of K\"ahler
metrics on $M_{2,1}$, and in particular when $c=1$, the K\"ahler class
of $\phi_{c, \mu}$ is proportional to the anti-canonical class of
$M_{2,1}$.

There exists some $\delta \in (0,\frac{1}{2})$ which depends on $c$
such that for any $\frac{1}{2}c < \mu <(\frac {1}{2}+\delta)c$,
$\phi_{c,\mu}(U)$ defines a K\"ahler metric on $M_{2,1}$ with $H>0$.
\end{proposition}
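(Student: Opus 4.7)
The plan is to verify four items for $\phi_{c,\mu}$: the boundary conditions at $U=\pm c$ (so $\tilde g$ extends smoothly to $M_{2,1}$), the K\"ahler class identification at $c=1$, positivity of the curvature components $A$ and $C$, and the key inequality $2B+\sqrt{AC}>0$ from the characterization following Proposition \ref{curvature tensors}. A direct algebraic check gives $\phi_{c,\mu}(\pm c)=0$ and $\phi_{c,\mu}'(\pm c)=\mp 2$, so the smooth compactification criterion of Lemma \ref{basic compactification} holds. Computing
\[
\phi_{c,\mu}''(U)=-\frac{12(c-\mu)}{c^{4}}U^{2}-\frac{2(2\mu-c)}{c^{2}},
\]
one sees that for $\mu\in(c/2,c)$ both coefficients are strictly negative, so $\phi_{c,\mu}''<0$ on $[-c,c]$ and $A=-\phi''/2>0$. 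Similarly $\phi_{c,\mu}\le\mu<c<6/5<2(2-c)\le 2(2+U)$, so $C>0$. The K\"ahler class identification at $c=1$ follows from Section 3.2: the $E_\infty$ and $E_0$ volumes are proportional to $1+c/2$ and $1-c/2$, so at $c=1$ the ratio is $3=(n+k)/(n-k)$, matching $K^{-1}_{M_{2,1}}=3E_\infty-E_0$.

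The main step is to show $2B+\sqrt{AC}>0$ on $[-c,c]$, which I would handle via a limit argument at $\mu=c/2$ together with continuity. At $\mu=c/2$ the generating function collapses to $\phi_{c,c/2}(U)=\frac{c}{2}(1-(U/c)^{4})$, and direct calculation yields $A=3U^{2}/c^{3}$ together with
\[
2B=\frac{c^{4}+3U^{4}+8U^{3}}{2c^{3}(2+U)^{2}},\qquad C=\frac{U^{4}+4c^{3}U+8c^{3}-c^{4}}{2c^{3}(2+U)^{2}}.
\]
The numerator of $2B$ has derivative $12U^{2}(U+2)\ge 0$ on $[-c,c]$ (since $c<2$), so $2B$ is nondecreasing with a unique zero $U_{0}\in(-c,0)$. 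On $[U_{0},c]$, $2B\ge 0$ and $\sqrt{AC}>0$ except at $U=0$ where $2B(0)=c/8>0$, so $F:=2B+\sqrt{AC}>0$ is immediate. On $[-c,U_{0}]$ the condition becomes $AC-4B^{2}>0$; at the endpoint $U=-c$ this evaluates to
\[
AC-4B^{2}\big|_{U=-c}=\frac{6}{c(2-c)}-\frac{4}{(2-c)^{2}}=\frac{12-10c}{c(2-c)^{2}},
\]
which is positive precisely when $c<6/5$, pinpointing the origin of the hypothesis. Once $F(\cdot,c/2)>0$ is established on all of $[-c,c]$, the proposition follows from continuity of $F$ on the compact $[-c,c]\times[c/2,c)$ (even at the point $(0,c/2)$ where $A$ vanishes, $F$ is continuous and equals $c/8>0$): there exists $\delta\in(0,1/2)$ such that $F(U,\mu)>0$ for every $U\in[-c,c]$ and every $\mu\in(c/2,c/2+\delta c)$.

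The main technical obstacle is verifying that $AC-4B^{2}>0$ throughout $[-c,U_{0}]$, not merely at the endpoint $U=-c$, when $\mu=c/2$. After clearing denominators this reduces to the degree-eight polynomial inequality
\[
P(U):=6U^{2}(2+U)^{2}\bigl(U^{4}+4c^{3}U+8c^{3}-c^{4}\bigr)-\bigl(c^{4}+3U^{4}+8U^{3}\bigr)^{2}>0
\]
on $[-c,U_{0}]$, for every $c\in(0,6/5)$. One verifies $P(-c)=8c^{5}(2-c)^{2}(6-5c)>0$, consistent with the endpoint calculation; the natural strategy is to factor out the degeneracy at $c=6/5$ (so that $P$ decomposes as $(6-5c)$ times a quantity one must bound below) or, alternatively, to split $[-c,U_{0}]$ at a convenient point and use Taylor expansions around $U=-c$ and $U=0$ to control $P$ locally, exploiting $|U|\le c<6/5$ to dominate the cross terms. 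This explicit polynomial analysis, while purely computational, is the computational heart of the argument.
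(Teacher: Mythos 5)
Your overall architecture is correct and matches the paper's: verify the boundary conditions, identify the class at $c=1$, check $A>0$ and $C>0$, establish the key inequality $2B+\sqrt{AC}>0$ at the degenerate parameter $\mu=c/2$, and then perturb in $\mu$. Your perturbation step is in fact cleaner than the paper's: you invoke joint continuity of $F=2B+\sqrt{AC}$ on $[-c,c]\times[c/2,c/2+\epsilon]$ (valid, since $A\geq 0$ and $C>0$ there, so $\sqrt{AC}$ is jointly continuous even at the degenerate point $(0,c/2)$, where $F=c/8$), whereas the paper splits off a neighborhood of $U=0$ and separately proves $\partial T_\mu/\partial\mu>0$ there. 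Your endpoint computation $P(-c)=8c^5(2-c)^2(6-5c)$, correctly locating the origin of the hypothesis $c<6/5$, also agrees with the paper's computation of $2B+\sqrt{AC}$ at $U=-c$.

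However, there is a genuine gap at exactly the point you flag yourself: you never prove that $P(U)=6U^2(2+U)^2(U^4+4c^3U+8c^3-c^4)-(c^4+3U^4+8U^3)^2>0$ on all of $[-c,U_0]$; you only evaluate it at $U=-c$ and list two candidate strategies without executing either. This is not a routine verification that can be waved through --- it is the entire analytic content of the base case $\mu=c/2$, and a degree-eight polynomial inequality with a parameter $c$ ranging over $(0,6/5)$ does not follow from an endpoint check. The paper resolves the analogous difficulty by a different device: it sets $T_{c/2}(U)=(U+2)^2\bigl(2B+\sqrt{AC}\bigr)$ and proves $T_{c/2}$ is \emph{strictly concave} on $[-c,0]$, so that its minimum is attained at $U=-c$ or $U=0$, both of which are explicitly positive. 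The concavity itself reduces to positivity of an auxiliary degree-eight polynomial $R(U)$ on $[-c,0]$, which the paper establishes by showing $R(-c)=4c^6(2-c)^2>0$ and $R'(U)>0$ on $[-c,0]$ via a term-by-term decomposition $\frac{1}{6}R'=I_1+I_2+I_3$ with each piece nonnegative for $0<c<\frac{6}{5}$. To complete your version you would need to carry out an argument of comparable weight for your $P(U)$ (for instance, a monotonicity or concavity analysis of $P$ on $[-c,U_0]$, using your bound on $U_0$ as the unique zero of $3U^4+8U^3+c^4$), or else switch to the paper's concavity route for $T_{c/2}$. As written, the proof is incomplete.
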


\begin{proof}[Proof of Proposition \ref{canonical class}]

We begin with the curvature tensors of $\phi_{c, \mu}(U)$:
\begin{align*}
&A=6(\frac{1}{c^3}-\frac{\mu}{c^4})U^2+(\frac{2\mu}{c^2}-\frac{1}{c}), \\
&B=\frac{3(\frac{1}{c^3}-\frac{\mu}{c^4})U^4+
8(\frac{1}{c^3}-\frac{\mu}{c^4})U^3+
(\frac{2\mu}{c^2}-\frac{1}{c})U^2+
4(\frac{2\mu}{c^2}-\frac{1}{c})U+\mu}{2(U+2)^2},\\
&C=\frac{(\frac{1}{c^3}-\frac{\mu}{c^4})U^4+
(\frac{2\mu}{c^2}-\frac{1}{c})U^2+2(U+2)-\mu}{(U+2)^2}.
\end{align*}

First note that for any $c \in (0,2)$, $\mu \in (\frac{1}{2}c,c)$,
we have $A>0$ for any $U \in [-c, c]$, then since
$C(-c)=\frac{2}{2-c}$ and $C(U+2)^2$ is increasing on $[-c,c]$, we
also have $C>0$.

Next one can check that
\[
2B+\sqrt{AC}|_{U=-c}=-\frac{2}{2-c}+\frac{1}{c}\sqrt{\frac{2(5c-4\mu)}{2-c}},
\]

From here, it is direct to see that given any $c \in (0,
\frac{6}{5})$, there exists some $\delta>0$ such that for any $
\frac{1}{2}c < \mu< (\frac{1}{2}+\delta)c$, $2B+\sqrt{AC}>0$ at
$U=-c$.

From now on let us consider $T_{\mu}(U)=(U+2)^2(2B+\sqrt{AC})$, it
suffices to show that $T_{\mu} (U)>0$ for any $U \in [-c,c]$ if
$\mu$ is sufficiently close to $\frac{1}{2}c$. Note that
\[
T_{\mu} (U)=P_{\mu}(U)+(U+2)\sqrt{Q_{\mu}(U)}
\] where
\[
P_{\mu}(U)=3(\frac{1}{c^3}-\frac{\mu}{c^4})U^4
+8(\frac{1}{c^3}-\frac{\mu}{c^4})U^3
+(\frac{2\mu}{c^2}-\frac{1}{c})U^2
+4(\frac{2\mu}{c^2}-\frac{1}{c})U+\mu
\]
and
\begin{align*}
Q_{\mu}(U)= & \ \  6(\frac{1}{c^3}-\frac{\mu}{c^4})^2 U^6
+7(\frac{1}{c^3}-\frac{\mu}{c^4})(\frac{2\mu}{c^2}-\frac{1}{c})U^4
+12(\frac{1}{c^3}-\frac{\mu}{c^4})U^3 \\
&
+\Big[(\frac{2\mu}{c^2}-\frac{1}{c})^2+6(4-\mu)(\frac{1}{c^3}-\frac{\mu}{c^4})\Big]U^2
+2(\frac{2\mu}{c^2}-\frac{1}{c})U
+(-\frac{2}{c^2}\mu^2+\frac{8+c}{c^2} \mu -\frac{4}{c}).
\end{align*}

\vskip 3mm

\begin{claim} \label{claim1}
$T_{\frac{c}{2}}(U) > 0$ on $[-c,c]$ and in particular it has a
positive lower bound at $0$.
\end{claim}

\vskip 3mm

\noindent \emph{Proof of Claim \ref{claim1}.} To see it is true,
note that:
\[
T_{\frac{c}{2}}(U)=(\frac{3}{2c^3}U^4+\frac{4}{c^3}U^3+\frac{c}{2})
+(U+2)\sqrt{\frac{3}{2c^6}U^6+\frac{6}{c^3}U^3+\frac{3(8-c)}{2c^3}U^2}.
\]
It suffices to consider the interval $[-c, 0]$, we will show that
$T_{\frac{c}{2}}(U)$ is strictly concave on $[-c, 0]$, thus it
attains its minimum either at $U=-c$ or at $U=0$, which are both
positive.
\[
\dfrac{d^2 T_{\frac{c}{2}} (U)}{ d
U^2}=\frac{6}{c^3}U(3U+4)+\frac{\frac{9}{2c^{12}}U^3 \cdot
R(U)}{\Big(\sqrt
{\frac{3}{2c^6}U^6+\frac{6}{c^3}U^3+\frac{3(8-c)}{2c^3}U^2}\Big)^3},
\]
where
\begin{align*}
R(U)=& \ 6U^8+6U^7+36 c^3 U^5+(108c^3-9c^4)
U^4\\
&+(80c^3-10c^4)U^3+30c^6 U^2+(108c^6-12c^7) U +c^8-20c^7+96c^6.
\end{align*}

Note that $R(-c)=4c^{6}(2-c)^2>0$, we will prove that $\frac{d R(U)}{d
U}>0$ on $[-c, 0]$, which leads to $R(U)>0$ for any $0<c<\frac{6}{5}$.
Indeed,
\begin{align*}
\frac{1}{6} \frac{d R(U)}{d U}=& \ 8 U^7+7 U^6+30 c^3 U^4+6(12
c^3-c^4)U^3\\
&+5(8c^3-c^4)U^2+10 c^6 U+(18 c^6 - 2c^7)\\
=&I_1+I_2+I_3.
\end{align*} where we have
\begin{align*}
I_1 =& \ 10 c^6 U + \frac{72}{5} c^6-2c^7 \geq \frac{72}{5}
c^6-12 c^7 = 12 (\frac{6}{5}-c) \geq 0, \\
I_2 =& \ 8 U^7+ 7 U^6+ \frac{13}{6} c^3 U^4 \geq U^6
(8U+7+\frac{13}{6} c) \geq 7U^6(1-\frac{5}{6} c) \geq 0,\\
I_3 =& \ \frac{167}{6} c^4 U^4+\frac{18}{5}c^6+6(12
c^3-c^4)U^3+5(8c^3-c^4)U^2,
\end{align*}for any $U \in [-c, 0]$ where $0<c<\frac{6}{5}$.

Next we prove $I_3>0$ on $[-c, 0]$.
\begin{align*}
I_3 \geq & \ U^2 \Big[\frac{167}{6} c^3 U^2+ 6(12 c^3-c^4)
U+5(8c^3-c^4) +\frac{18}{5} c^4 \Big].
\end{align*} Let $S(U)$ denote the quadratic function inside the
bracket:
\[
S(U)=\frac{167}{6} c^3 U^2+ 6(12 c^3-c^4) U+5(8c^3-c^4)
+\frac{18}{5} c^4.
\] Now it is straightforward to see that under the assumption
$0<c<\frac{6}{5}$, $S(U)$ attains its minimum at $U=-c$, and
\[
S(-c)=c^3 (\frac{203}{6} c^2-\frac{367}{5}c+40)>0
\]

Putting these together, we have proved that $\dfrac{d^2
T_{\frac{c}{2}} (U)}{ d U^2}$ is strictly negative on $[-c, 0]$, and
therefore $T_{\frac{c}{2}}(U) > 0$ on $[-c,c]$.

\vskip 3mm

Now let us continue with the proof of Proposition \ref{canonical class}, note that
as $\mu \rightarrow (\frac{c}{2})^+$, $T_{\mu}(U)$ converges to
$T_{\frac{c}{2}} (U)$ uniformly on $[-1, -U_0] \cup [U_0,1]$ for
some fixed small number $U_0>0$,
\begin{align*}
\frac{\partial T_{\mu}(U)}{\partial \mu} = & \ \ \frac{\partial
P}{\partial \mu}+
\frac{(U+2)}{2\sqrt{Q}} \frac{\partial Q}{\partial \mu} \\
=& \ (-\frac{3}{c^4}U^4-\frac{8}{c^4}U^3+\frac{2}{c^2}U^2+\frac{8}{c^2}U+1)\\
&+\frac{(U+2)}{2\sqrt{Q}}
\Big\{-\frac{12}{c^4}(\frac{1}{c^3}-\frac{\mu}{c^4})U^6+
7\Big[\frac{2}{c^2}(\frac{1}{c^3}-\frac{\mu}{c^4})-
\frac{1}{c^4}(\frac{2\mu}{c^2}-\frac{1}{c})\Big]U^4\\
&
-\frac{12}{c^4}U^3-\Big[\frac{4}{c^2}(\frac{2\mu}{c^2}-\frac{1}{c})
-6(\frac{1}{c^3}-\frac{\mu}{c^4})-\frac{6}{c^4}(4-\mu)\Big]U^2
+\frac{4}{c^2}U+ \frac{8+c-4\mu}{c^2} \Big\}
\end{align*}

Take $U_0=\min\{\frac{1}{100}, \frac{1}{100}c^2\}$, we see that
$\frac{\partial T_{\mu}(U)}{\partial {\mu}}$ is strictly positive
for any $|U|<U_0$ as long as $\mu-\frac{c}{2}$ is small enough. In
other words, we can find some $\delta>0$ such that $T_{\mu }(U)>0$
on $[-U_0, U_0]$ for any $\frac{c}{2}<\mu<(\frac{1}{2}+\delta)c$.
Moreover, $T_{\mu}(U)$ converges to $T_{\frac{c}{2}} (U)$ outside
$[-U_0, U_0]$, hence we get $T_{\mu}(U)>0$ for $\frac{1}{2}c < \mu
<(\frac {1}{2}+\delta)c$.
\end{proof}

\begin{example}[Pinching constants in the anti-canonical class]
 Let us now focus on the anti-canonical examples constructed in the
previous proposition, namely, with $c=1$. We expect that Proposition \ref{canonical class} is still true for any $\frac{1}{2} < \mu < \frac{3}{4}$.
Numerical tests suggest that $2B+\sqrt{AC}$ is indeed positive on $U
\in [-1,1]$ for any $\frac{1}{2} < \mu < \frac{3}{4}$. However, it seems
rather tedious to prove it rigorously, as $T_{\mu}(U)$ is not
always increasing on $[-1,1]$. We need some better estimates on the
critical points of $T_{\mu}(U)$ which lie in $[-1,0)$.

The following table shows that if $\mu$ is close to $\frac{1}{2}$,
then the local pinching constants of K\"ahler metric generated by
$\phi_{\mu}$ differs from the global one.
\begin{center}
    \begin{tabular}{ | l | l | l | l | l | l| p{5cm} |}
    \hline
    Intervals of $U$ & $[-1, U_1]$ & $[U_1, U_2]$
    & $[U_2, U_3]$ & $[U_3, U_4]$ & $[U_4, 1]$ \\ \hline
    $\min_{||v||=1} H(U, v)$ & $\frac{AC-4B^2}{A+C-4B}$ &
    $\frac{AC-4B^2}{A+C-4B}$
     & $A$ & $\frac{AC-4B^2}{A+C-4B}$ &$\frac{AC-4B^2}{A+C-4B}$ \\ \hline
    $\max_{||v||=1} H(U, v)$ & $A$ &
    $C$
     & $C$ & $C$ &$A$ \\ \hline
    \end{tabular}
\end{center}
In the above, $U_1<U_4$ are values which corresponds to $A=C$, and
$U_2<U_3$ are values which corresponds to $A=2B$.

For example along the zero section $U=-1$, $\min
H=\frac{AC-B^2}{A+C-4B}=\frac{6-8\mu}{11-4\mu}$ and $\max
H=A(-1)=5-4\mu$. Therefore the pinching constant along zero section
is $\frac{6-8\mu}{(5-4\mu)(11-4\mu)}$, which is close to
$\frac{2}{27}$ as $\mu$ goes close to $\frac{1}{2}$. It is clear
that the global maximum of holomorphic sectional curvature is
attained at $U=-1$ by $A=5-4\mu$ while the global minimum is
attained at $U=0$ by $A=2\mu-1$. Therefore, we conclude that the
local pinching constants of K\"ahler metric generated by
$\phi_{\mu}$ differs from the global one. Indeed, we have
\[
\min_{U \in [-1,1]} \frac{\min_{||v||=1} H(U, v)}{\max_{||v||=1}
H(U, v)}=\frac{4(2\mu-1)}{4-\mu}, \, \, \frac{\min_{U, ||v||=1} H(U,
v)}{\max_{U, ||v||=1} H(U, v)}=\frac{2\mu-1}{5-4\mu}.
\]
\end{example}

Now we are ready to prove our main theorem.

\begin{theorem}\label{main any class}
Given any Hirzebruch manifold $M_{n,k}=\mathbb{P}(H^{k}\oplus
1_{\mathbb{CP}^{n-1}})$, there exists a K\"ahler metric of $H>0$ in
each of its K\"ahler classes.
\end{theorem}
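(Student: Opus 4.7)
The plan is to reduce the theorem to the construction of a one-parameter family of generating functions in Calabi's ansatz. From the description (\ref{kahler cone}) of the K\"ahler cone of $M_{n,k}$ together with the Koiso-Sakane correspondence between $U(n)$-invariant K\"ahler metrics and the data $(\phi, [U_{min}, U_{max}])$, each ray of K\"ahler classes of $M_{n,k}$ is represented, after normalization, by a symmetric interval $[-c,c]$ with $c \in (0, n/k)$: the volume ratio $(1 + kc/n)/(1 - kc/n) = (n+kc)/(n-kc)$ traces out every value in $(1, \infty)$ as $c$ varies. The task therefore reduces to the following: for each $c \in (0, n/k)$, construct an even concave function $\phi$ on $[-c, c]$ satisfying $\phi(\pm c) = 0$, $\phi'(\mp c) = \pm 2$, $\phi < 2(n+kU)/k^2$, and the pointwise inequality $2B + \sqrt{AC} > 0$ of Proposition \ref{curvature tensors}.

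The natural candidate family is the even polynomial
\[
\phi_{c,N}(U) \;=\; \alpha\,(c^2 - U^2) + \beta\,(c^{2N} - U^{2N}),
\]
with integer $N$ chosen depending on $c$, and with $\alpha = \alpha(c, N, \beta)$ fixed by the boundary derivative condition $\phi_{c,N}'(c) = -2$, that is $\alpha c + N\beta c^{2N-1} = 1$. For small $c$ one takes $\beta = 0$ and recovers Hitchin's quadratic $\phi_c(U) = c - U^2/c$, already shown to satisfy $H > 0$ when $c < n/(k(2k+1))$. For moderate $c$ one can take $N = 2$ and adapt Proposition \ref{canonical class}. For the full range $c \in (0, n/k)$ both $N$ and a small positive $\beta$ are chosen in a $c$-dependent way: away from the endpoints, the term $\beta(c^{2N} - U^{2N})$ is $O(\beta)$ so the polynomial is essentially Hitchin's, while near $U = \pm c$ the derivative $-2N\beta\,U^{2N-1}$ becomes significant and contributes a large positive $A = -\tfrac{1}{2}\phi''$ inside a thin boundary layer of width $\sim N^{-1}$.

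The main obstacle is the pointwise inequality $2B + \sqrt{AC} > 0$ on all of $[-c,c]$; the conditions $A, C > 0$ are comparatively easy. The strategy is to split $[-c,c]$ into an interior region $|U| \leq c - \delta_N$ and a boundary strip $|U| \geq c - \delta_N$ with $\delta_N$ of order $1/N$. On the interior, smallness of $\beta$ reduces the problem to a pointwise estimate close to Hitchin's, controlled by a perturbative/compactness argument and the explicit computation for $\phi_c$ carried out earlier in this section. On the boundary strip the high-degree term dominates: the positive contribution to $A$ scales like $N(2N-1)\beta c^{2N-2}(U/c)^{2N-2}$, whereas the negative part of $2B$ remains bounded, so for $N$ sufficiently large $\sqrt{AC}$ overpowers $|2B|$. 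Making these two estimates uniform in $c$ requires tracking how close $\phi$ must come to the linear barrier $\phi = 2(n+kU)/k^2$ at the left endpoint $U = -c$ as $c \uparrow n/k$; this is the primary source of technical difficulty and corresponds to what the introduction calls \emph{delicate estimates on even polynomials with large degrees}.

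Once the family $\{\phi_{c,N(c)}\}$ has been produced for every $c \in (0, n/k)$, each generating function yields, via the Koiso-Sakane construction together with the compactification of Lemma \ref{basic compactification}, a smooth $U(n)$-invariant K\"ahler metric on $M_{n,k}$ with $H > 0$ whose K\"ahler class realizes an arbitrary ray in the K\"ahler cone; an overall scaling then places the metric in any prescribed class on that ray, completing the proof.
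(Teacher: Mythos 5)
Your overall strategy is the paper's: reduce via the Koiso--Sakane/Calabi ansatz to producing, for each $c\in(0,\tfrac{n}{k})$, an even concave generating function on $[-c,c]$ with the prescribed boundary data, and take that function to be a small positive quadratic plus a single even term of high degree $2N$ with $N$ growing as $c\uparrow \tfrac{n}{k}$. Indeed, in the degenerate limit $\alpha\to 0$ your family becomes $\phi=\frac{1}{Nc^{2N-1}}(c^{2N}-U^{2N})$, which is exactly the base case $T(\tfrac{c}{p},0,\cdot)$ to which the paper's Proposition \ref{any class} reduces. The reduction of K\"ahler classes to symmetric intervals plus scaling is also as in the paper.

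However, there is a genuine gap at the one step that carries all the content: the verification that $2B+\sqrt{AC}>0$ on the whole interval. Your interior estimate cannot be obtained by a ``perturbative/compactness argument'' off ``the explicit computation for $\phi_c$,'' because for $c\ge \frac{n}{k(2k+1)}$ Hitchin's quadratic \emph{fails} the inequality; to salvage the interior you are forced to take $\alpha$ at most on the order of $\frac{n-kc}{2k^2c^2}$, which degenerates to $0$ as $c\uparrow\frac{n}{k}$, so there is no fixed positive margin to perturb from and a fresh quantitative estimate is required. Worse, the two-zone argument leaves the transition region uncontrolled: writing $U=-c(1-s/N)$, the large positive contribution $N(2N-1)\beta U^{2N-2}$ to $A$ and the negative contribution $-kN\beta|U|^{2N-1}(n+kU)^{-1}$ to $B$ decay at the \emph{same} exponential rate in $s$, so ``$A$ is large while $2B$ stays bounded'' is false uniformly across the strip, and one must check that $\sqrt{AC}$ wins at every intermediate scale. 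The paper closes precisely this gap differently: after using monotonicity of $T$ in the perturbation parameters $(\mu,\alpha_2)$ near $U=0$ to reduce to the limit function, it studies $W(x)=-(P(x))^2+(n+kcx)^2Q(x)$ on $[-1,x_0]$ (with $x_0$ the unique zero of $P$) and proves in Lemma \ref{T positive} that $W'$ changes sign at most once there, so positivity on the whole interval follows from positivity at the two endpoints, which are checked directly. Without such a global monotonicity (or an honest uniform interpolation estimate across your boundary layer), the proposal does not yet constitute a proof.
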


Theorem \ref{main any class} is a corollary of the following
proposition.

\begin{proposition}\label{any class}
Let $n \geq 2$ and $k \geq 1$ be any two integers, there exists some
$p_0(n,k) \in \mathbb{N}$ and a sequence of positive real numbers
$\{\epsilon_p\}_{p \geq p_0}$ with $\lim_{p \rightarrow \infty}
\epsilon_p=0$, such that for any $c \in
(0,\frac{n}{k}-2\epsilon_p]$, there exists $p_1(n ,k, c)>p_0$ with
the following property:

Given any $p \geq p_1$ there exists some $\delta_1>0$ and
$\delta_2>0$ such that for any $\alpha_2 \in (0,\delta_1)$ and
$\mu=\frac{c}{p}+\delta_2$, $\phi(x)$ defined on $[-c, c]$ by
\begin{align}
\phi(x)=\mu-\alpha_2 x^2-\alpha_{2p-2} x^{2p-2}-\alpha_{2p} x^{2p}.
\label{polynomial formula}
\end{align} where
\[
\alpha_{2p-2}=\frac{p\mu-c-(p-1)\alpha_2 c^2}{c^{2p-2}}, \ \ \
\alpha_{2p}=\frac{c-(p-1)\mu+(p-2)\alpha_2 c^2}{c^{2p}},
\] generates a K\"ahler metric with $H>0$ on $M_{n,k}$.
\end{proposition}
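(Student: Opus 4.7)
The plan is to verify the three inequality conditions from the earlier characterization of $U(n)$-invariant K\"ahler metrics on $M_{n,k}$ with $H > 0$: namely $A = -\phi''/2 > 0$, $C = (2(n+kU) - k^2\phi)/(n+kU)^2 > 0$, and the critical $2B + \sqrt{AC} > 0$ with $2B = (k^2\phi - k(n+kU)\phi')/(n+kU)^2$. The coefficients $\alpha_{2p-2}$ and $\alpha_{2p}$ in the statement are precisely those forced by the smooth compactification conditions $\phi(\pm c) = 0$ and $\phi'(\pm c) = \mp 2$, obtained by solving a $2\times 2$ linear system, so this part needs only a direct verification.

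First I would check $A, C > 0$. Substituting $\mu = c/p + \delta_2$ into the coefficient formulas gives $\alpha_{2p-2}\, c^{2p-2} = p\delta_2 - (p-1)\alpha_2 c^2$ and $\alpha_{2p}\, c^{2p} = c/p - (p-1)\delta_2 + (p-2)\alpha_2 c^2$. Choosing $\delta_2$ in a narrow window (for example $\delta_2 = c/(2p(p-1))$ together with $\delta_1 = 1/(2(p-1)^2 c)$) makes both coefficients strictly positive for every $\alpha_2 \in (0, \delta_1)$; hence $\phi''(x) < 0$ on $[-c,c]$, giving $A > 0$, concavity, $\phi > 0$ on $(-c,c)$, and $\max \phi = \mu$ of size $O(c/p)$. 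The condition $C > 0$ is equivalent to $\phi < 2(n+kU)/k^2$, whose worst case $U = -c$ is $\mu < 2(n-kc)/k^2$; this is ensured by choosing $\epsilon_p \to 0$ slowly enough (for instance $\epsilon_p = n/p$) so that $c \leq n/k - 2\epsilon_p$ forces $n - kc \geq 2k\epsilon_p$ to dominate $\mu$.

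The heart of the proof is the main inequality $2B + \sqrt{AC} > 0$. I would partition $[-c,c]$ into an interior region $I_p = [-c(1-\eta_p), c(1-\eta_p)]$ and a boundary layer $L_p$, with $\eta_p \to 0$ but $p\eta_p \to \infty$ (say $\eta_p = (\log p)/p$). On $I_p$ the high-degree monomials of $\phi$ and its derivatives are exponentially small in $p$, since $(U/c)^{2p-2} \leq (1-\eta_p)^{2p-2} \leq e^{-2(p-1)\eta_p}$; thus $\phi, \phi', \phi''$ are uniformly close to the Hitchin-like quantities $\mu - \alpha_2 U^2$, $-2\alpha_2 U$, $-2\alpha_2$. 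Plugging in shows that $2B$ is close to $(k^2\mu - 2k\alpha_2 U(n+kU))/(n+kU)^2$, which is strictly positive once $\alpha_2$ is small enough relative to $\mu$, so $2B + \sqrt{AC} > 0$ on $I_p$ automatically. On $L_p$ the mechanism flips: a Taylor expansion at $\pm c$ using the explicit coefficients gives $\phi''(\pm c) = -(4p-2)/c + O(1)$, so $A \gtrsim p/c$ throughout $L_p$, while $|\phi| \leq \mu$ and $|\phi'| \leq 2$ (by concavity and the boundary slopes) keep $|2B|$ bounded and $C$ uniformly positive from below. Hence $\sqrt{AC} \gtrsim \sqrt{p}$ dominates $|2B|$ for $p$ sufficiently large.

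The main obstacle is the quantitative bookkeeping, particularly the sharp endpoint estimate: at $U = -c$ the inequality $AC > 4B^2$ reduces (using $\phi = 0$, $\phi' = 2$) to $(2p-1)(n-kc) > 2k^2 c$, which is the origin of the threshold $p_1(n,k,c)$ and forces the admissibility constraint $c \leq n/k - 2\epsilon_p$ with $\epsilon_p \to 0$. The transitional overlap between $I_p$ and $L_p$ must be handled with care, balancing the exponentially small interior remainders against the linearly growing $A$ in the boundary layer; this is what drives the specific choice of $\eta_p$. Because the high-degree terms $\alpha_{2p-2}U^{2p-2}$ and $\alpha_{2p}U^{2p}$ concentrate sharply near $|U| = c$ for large $p$, the interior and boundary-layer estimates decouple, which is the structural feature that allows the construction to cover every K\"ahler class $c \in (0, n/k)$ in the limit $p \to \infty$.
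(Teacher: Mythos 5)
Your overall frame (verify $A>0$, $C>0$, and $2B+\sqrt{AC}>0$, with the endpoint computation at $U=-c$ reducing to $(2p-1)(n-kc)>2k^2c$) matches the paper's starting point, and the interior/boundary-layer decomposition is a genuinely different route from the paper's. But the boundary-layer step as you state it is false, and it fails exactly where the real difficulty of the proposition lives. You claim that a Taylor expansion at $\pm c$ gives $A\gtrsim p/c$ \emph{throughout} $L_p=\{c(1-\eta_p)\le |U|\le c\}$, hence $\sqrt{AC}\gtrsim\sqrt{p}$ there. With $\eta_p=(\log p)/p$ and the dominant coefficient $\alpha_{2p}\approx \frac{1}{p\,c^{2p-1}}$, one has $A\approx \frac{2p-1}{c}\bigl(|U|/c\bigr)^{2p-2}$, which at the inner edge $|U|=c(1-\eta_p)$ is of size $\frac{2p-1}{c}\,p^{-2}=O\!\left(\frac{1}{pc}\right)$, not $\gtrsim p/c$; correspondingly $\sqrt{AC}=O(p^{-1/2})$ there, not $\gtrsim\sqrt p$. (For the same reason your interior claim that the high-degree monomials are ``exponentially small'' on $I_p$ is an overstatement: they are only $O(p^{-2})$, and the positive main term $k^2\mu/(n+kU)^2$ you need them dominated by is itself only $O(p^{-1})$.) A single uniform lower bound on $A$ over the layer cannot work: the domination of the negative part of $2B$ by $\sqrt{AC}$ has to exploit that both quantities decay with $|U|$ at correlated rates --- on $[-c,0]$ the negative contribution to $2B$ comes from $-k(n+kU)\phi'$ with $\phi'\approx 2(|U|/c)^{2p-1}$, while $\sqrt{A}\approx\sqrt{(2p-1)/c}\,(|U|/c)^{p-1}$, so the relevant ratio involves $(|U|/c)^{p}$ and a factor $\sqrt{p}$. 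That correlated-decay argument is absent from your write-up, and without it the two-region scheme leaves the transition zone uncovered.

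A second, smaller gap: your estimates are all carried out for the (approximate) degenerate profile, but the proposition requires $\alpha_2>0$ and $\mu=\frac{c}{p}+\delta_2$ strictly off the degenerate values, and the perturbation terms $-2\alpha_2 U$ in $\phi'$ contribute a negative piece to $2B$ of order $\alpha_2$ in the middle of the interval that carries no factor $(|U|/c)^{2p}$; you need an explicit smallness condition on $\alpha_2$ relative to $\mu\sim c/p$ to absorb it. The paper handles both issues differently: it first shows $\partial T/\partial\mu>0$ and $\partial T/\partial\alpha_2>0$ near $U=0$ (with uniform convergence elsewhere) to reduce to the exact degenerate polynomial $\phi=\frac{c}{p}\bigl(1-(U/c)^{2p}\bigr)$, and then, rather than a pointwise domination estimate, proves positivity of $T=P+(n+kcx)\sqrt{Q}$ on $[-1,x_0]$ ($x_0$ the zero of $P$) by a monotonicity analysis of $W=-P^2+(n+kcx)^2Q$, using two-sided bounds on $x_0$ of the form $(ck/2pn)^{1/(2p-1)}<-x_0<(ck/n)^{1/(2p-1)}$. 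Your approach could likely be repaired, but the repair is precisely the quantitative content currently missing.
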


\begin{proof}[Proof of Proposition \ref{any class}]
Let $\epsilon_p=\frac{2n}{2p+2k-1}$ and pick any
$c<\frac{n}{k}-2\epsilon_{p}$, we will determine constants
$\delta_1$ and $\delta_2$ step by step. A quick observation is that
in order to make sure both $\alpha_{2p-2}$ and $\alpha_{2p}$
positive, we need
\begin{equation}
\delta_2<\frac{c}{p(p-1)}, \ \text{and} \ (p-1)\delta_1 c^2< p \,
\delta_2.  \label{condition 1}
\end{equation}
Note that
\begin{align}
A=& \ p(2p-1) \alpha_{2p} U^{2p-2}+ (p-1)(2p-3)\alpha_{2p-2} U^{2p-4}
+\alpha_2,  \label{general A}\\
B=& \ \frac{(2p-1)\alpha_{2p}U^{2p}+ 2p \, nk \, \alpha_{2p} \,
U^{2p-1}+ (2p-3)k^2
\alpha_{2p-2} U^{2p-2}}{2(n+kU)^2}  \nonumber\\
&+\frac{(2p-2)nk \alpha_{2p-2} U^{2p-3} +k^2 \alpha_2 U^2
+2nk\alpha_2 U+ k^2\mu}{2(n+kU)^2}, \label{general B}\\
C=& \ \frac{k^2 \alpha_{2p} U^{2p}+k^2 \alpha_{2p-2} U^{2p-2}+k^2
\alpha_2 U^2 + 2kU + 2n-k^2 \mu}{(n+kU)^2}.  \label{general C}
\end{align}

Obviously $A>0$ on $[-c, c]$, note that
\[C>\frac{2n-k^2
\mu-2kc}{(n+kU)^2} \ \ \text{for any} \ U  \in [-c,c]. \] By
plugging $c \leq \frac{n}{k}-2\epsilon_p$ and
$\mu=\frac{c}{p}+\delta_2$ into the above one can conclude that is a
sufficient condition for $C>0$ is
\begin{equation}
\delta_2<\frac{n(2p+2k+1)}{kp(2p+2k-1)}. \label{condition 2}
\end{equation}

Note that $2B+\sqrt{AC}|_{U=-c}>0$ is equivalent to $\phi^{\prime
\prime} (-c)<-\frac{4k^2}{n-kc}$. By a direct calculation we see
that it is further equivalent to
\begin{equation}
\frac{k(2p-1)+2k^2}{c(n-kc)} \big[ \frac{n}{k}-\epsilon_p -c \big]
>\frac{2p(p-1)}{c^2} \delta_2 -\alpha_2 (2p^2-6p+4).
\label{condition 3}
\end{equation}
In order that (\ref{condition 3}) holds, it suffices to pick
$\delta_2$ so that the following holds:
\begin{equation}
\frac{k(2p-1)+2k^2}{(n-kc)} \big[ \frac{n}{k}-\epsilon_p -c \big]
>\frac{2p(p-1)}{c} \delta_2.
\label{condition 4}
\end{equation}
 In other words, for any $c \in (0,\frac{n}{k}-2\epsilon_p]$, it is
easy to pick $\delta_1$ and $\delta_2$ such that all the
inequalities in (\ref{condition 1}), (\ref{condition 2}), and
(\ref{condition 4}) are satisfied.

It remains to show $2B+\sqrt{AC}$ is positive on $[-c,c]$. Motivated
by the proof of Proposition \ref{canonical class} let us introduce
\begin{align*}
T(\mu, \alpha_2, U)=P(\mu, \alpha_2, U)+ (n+kU) \sqrt{Q(\mu,
\alpha_2, U)},
\end{align*} where
\begin{align*}
P(\mu, \alpha_2, U)=2(n+kU)^2 B(U), \ \text{and}\  Q(\mu, \alpha_2,
U)=(n+kU)^2 A(U) \cdot C(U).
\end{align*} where $A, B, C$ are given in (\ref{general A}), (\ref{general
B}), and (\ref{general C}).

We need to show that $T(\mu, \alpha_2, U)>0$ for any $U \in [-c,c]$
under the assumption $c \in (0,\frac{n}{k}-2\epsilon_p]$, $\alpha_2
\in (0,\delta_1)$, and $\mu=\frac{c}{p}+\delta_2$, where $\delta_1$
and $\delta_2$ satisfy (\ref{condition 1}), (\ref{condition 2}), and
(\ref{condition 3}). By a similar argument as in the proof of
Proposition \ref{canonical class}, one checks that for $\alpha_2$,
$\delta_2$ small, there exists $U_0$ sufficiently small.
\[
\dfrac{\partial T(\mu, \alpha_2, U)}{\partial {\mu}}>0, \
\text{and}\  \dfrac{\partial T(\mu, \alpha_2, U)}{\partial
{\alpha_2}}>0
\] for any $|U|<U_0$.

For example, note that
\[
\frac{\partial \alpha_{2p-2}}{\partial \mu}=\frac{p}{c^{2p-2}},
\frac{\partial \alpha_{2p}}{\partial \mu}=-\frac{p-1}{c^{2p}}
\]
Therefore,
\begin{align*}
\frac{\partial P}{\partial \mu} = & \  -\frac{(p-1)(2p-1)k^2}{c^{2p}}
U^{2p} +(2p-3)\frac{k^2p}{c^{2p-2}} U^{2p-2} +
(2p-2) \frac{nkp}{c^{2p-2}} U^{2p-3}+k^2,\\
\frac{\partial Q}{\partial \mu} = & \ \frac{\partial A}{\partial \mu}
C+A\frac{\partial C}{\partial \mu}\\
=& \ \Big( -\frac{p(p-1)(2p-1)}{c^{2p}}
U^{2p}+\frac{p(p-1)(2p-3)}{c^{2p-2}} U^{2p-4} \Big) \cdot C\\
& \ + \Big( -\frac{k^2(p-1)}{c^{2p}} U^{2p}+\frac{k^2 p}{c^{2p-2}}
U^{2p-2} -k^2\Big) \cdot A,\\
\geq & \  \frac{p(p-1)(2p-3)}{2c^{2p-2}} U^{2p-4} (2n-k^2 \mu)
-2k^2\Big(\alpha_2+(p-1)(2p-3)\alpha_{2p-2} U^{2p-4}\Big).
\end{align*}
It follows that when $|U|$, $\alpha_2$, $\delta_2$ are small enough,
\begin{align}
\frac{\partial T(\mu, \alpha_2, U)}{\partial \mu} \  = & \ \
\frac{\partial P}{\partial \mu}+ \frac{(n+kU)}{2\sqrt{Q}}
\frac{\partial Q}{\partial \mu} \ \geq \ \frac{k^2}{2}-\frac{4nk^2
\alpha_2}{2\sqrt{\alpha_2 \cdot n}} \ > \ 0.  \label{derivative
positive}
\end{align}

Therefore to prove Theorem \ref{any class} it suffices to  show that
$T(\frac{c}{p}, 0, U)>0$ for $U \in [-c,c]$. Now we have
\[
\alpha_{2p-2}=0, \ \text{and} \ \alpha_{2p}=\dfrac{1}{p \,
c^{2p-1}}.
\]
Therefore,
\begin{align}
P(\frac{c}{p}, 0, U)=& \ \frac{(2p-1)k^2}{p c^{2p-1}} U^{2p} +
\frac{2nk}{c^{2p-1}} U^{2p-1}+\frac{c}{p}k^2, \\
Q(\frac{c}{p}, 0, U)=& \ \frac{2p-1}{c^{2p-1}} U^{2p-2}
\Big(\frac{k^2}{p c^{2p-1}} U^{2p} + 2 k U + 2n-\frac{c}{p} k^2
\Big).
\end{align}
Let us reparametrize $x=\frac{U}{c}$, then $T(x)$ is defined on
$[-1,1]$.
\begin{align}
T(\frac{c}{p}, 0, x)=& \  P(x)+(n+kcx)\sqrt{Q(x)}\\
=& \ \frac{(2p-1)k^2 c}{p} x^{2p} + 2nk
x^{2p-1}+ \frac{c}{p} k^2 \nonumber\\
& \ +(n+k c \,x)\sqrt{(2p-1)  x^{2p-2} \Big(\frac{k^2}{p} x^{2p} + 2kx
+ \frac{2n} {c}-\frac{k^2}{p} \Big)}
\end{align}

Note that $P(x)$ is increasing on $[-1,0]$ and has a unique zero
$x_0 \in (-1,0)$. We already have $T(-1)>0$ and $T(x_0)>0$ from
(\ref{condition 1}), (\ref{condition 2}), and (\ref{condition 3}).
It suffices to show $T(x)>0$ on $[-1, x_0]$. To that end, let us
introduce
\[
W(x)=-(P(x))^2+(n+kxc)^2 Q(x).
\]

Obviously we also have $W(-1)>0$ and $W(x_0)>0$. By
Lemma \ref{T positive} below we conclude that $W(x)>0$ on $[-1,x_0]$,
which implies $T(x)>0$ for any $x \in [-1, 0]$, thus completing the proof of Proposition \ref{any class}.
\qed

\begin{lemma}\label{T positive}
Given any $n \geq 2$ and $k$ positive integers and $c \in (0,
\frac{n}{k}-2\epsilon_p]$ for $p \geq p_0(n,k)$, there exists
$p_1(n, k, c)$ such that for any $p>p_1$, either there exists
$-1<x_1<x_0$ such that $W(x)$ is increasing on $[-1, x_1]$ and
decreasing on $[x_1, x_0]$,  or $W(x)$ is increasing on $[-1, x_0]$.
\end{lemma}

\noindent \emph{Proof of Lemma \ref{T positive}.}
A straightforward calculation shows that
\begin{align*}
\frac{d W}{d x}&  = \ -2P(x)P^{\prime}(x)+2(n+kcx) kc Q(x)+(n+kcx)^2
Q^{\prime}(x)\\
&  = \ (2p-1)(n+kcx)x^{2p-3} J(x),
\end{align*} where
\begin{align*}
J(x)=& \ -4k^3 c \frac{p-1}{p} x^{2p+1} -2nk^2 \frac{2p+1}{p} x^{2p}
+2k^2 c (2p+1) x^2\\
& \ +(8npk-2kn-\frac{4k^3c}{p}-2k^3c)x+(\frac{4pn^2}{c}-\frac{4n^2}{c}
-2k^2n+\frac{2nk^2}{p}).
\end{align*}

We add a brief remark on $J(-1)$ when $c=\frac{n}{k}-2\epsilon_p$.
It follows that
\begin{align}
J(-1)=& \ \frac{4p}{c}(n-kc)^2 +6k^3c-6nk^2+2k^2c+2kn-\frac{4n^2}{c} \nonumber\\
=& \ \frac{4pk^2}{\frac{n}{k}-2\epsilon_p} 4\epsilon_p^2+\epsilon_p
(-12k^3-4k^2-\frac{8nk}{\frac{n}{k}-2\epsilon_p})  \nonumber\\
=& \ \epsilon_p \Big(-12k^3-4k^2-\frac{8nk}{\frac{n}{k}-2\epsilon_p}+
\frac{16 p k^2 \epsilon_p}{\frac{n}{k}-2\epsilon_p} \Big).
\end{align}
Recall that $\epsilon_p=\frac{2n}{2p+2k-1}$. A tedious calculation will
lead to the fact that for $k$ suitably large ($k \geq 5$), $J(-1)>0$ for $p=p(n,
k)$ large enough. On the other hand, if $k$ is small ($k \leq 2$ for
example), then $J(-1)<0$.

The crucial observation leads to the proof of the lemma is that, for
any $0< c \leq \frac{n}{k}-2\epsilon_p$,  we can find $p_1=p(n, k,
c)$ so that $J^{\prime}(x)>0$ for any $x \in [-1, x_0]$.

First let us estimate $x_0$, the unique zero of $P(x)$ on $(-1,0)$. We
have
\begin{align}
P(x_0)&  = \ \frac{(2p-1)k^2 c}{p} x_0^{2p} + 2nk x_0^{2p-1}+ \frac{c}{p}
k^2 \label{root 1}\\
&  = \ x_0^{2p-1} \Big[(2-\frac{1}{p})k^2 c x + 2nk \Big] + \frac{c}{p}
k^2. \label{root 2}
\end{align}

Note that (\ref{root 1}) implies that $2nk (-x_0)^{2p-1}
>\frac{c}{p}k^2$, while (\ref{root 2}) implies that
\[
(-x_0)^{2p-1} \Big[2nk -(2-\frac{1}{p})k^2 c \Big]  <\frac{c}{p}k^2.
\]

To sum up, we have the following estimates on $x_0$
\begin{equation}
(\frac{ck}{2pn})^{\frac{1}{2p-1}}<-x_0<(\frac{ck}{n})^{\frac{1}{2p-1}}.
\label{x0 estimate}
\end{equation}

Next we compute $J^{\prime}(x)$:
\begin{align}
\frac{1}{p} J^{\prime}(x) \ =& \ -4k^3 c(1-\frac{1}{p})(2+\frac{1}{p})
x^{2p}-4nk^2 (2+\frac{1}{p}) x^{2p-1} \nonumber\\
& \ +4k^2c (2+\frac{1}{p})
x+\Big(8nk-\frac{2kn}{p}-\frac{4k^3c}{p^2}-\frac{2k^3c}{p}\Big)
\label{J prime formula 1}.
\end{align}

In order to do calculation in the $O(\frac{1}{p})$ order, we note
that when $p=p(n, k)$ is large enough, we have
\begin{equation}
\frac{2n}{3p}<\epsilon_p <\frac{n}{p} . \label{epsilon estimate}
\end{equation}
Plugging into $c=\frac{n}{k}-s$ where $2\epsilon_p \leq
s<\frac{n}{k}$, it follows from (\ref{J prime formula 1}) that
\begin{align}
\frac{1}{p} J^{\prime}(x)=& \ -4k^3
(\frac{n}{k}-s)(2-\frac{1}{p}-\frac{1}{p^2})
x^{2p}-4nk^2 (2+\frac{1}{p}) x^{2p-1} \nonumber\\
&\ +4k^2 (\frac{n}{k}-s) (2+\frac{1}{p})
x+\Big(8nk-\frac{2kn}{p}-\frac{4 n k^2}{p^2}+ \frac{4 k^3
s}{p^2}-\frac{2n k^2}{p}+\frac{2k^3 s}{p}\Big)
\\
\geq & \ -8 n k^2 x^{2p-1} (x+1)+8nk(1+x) \nonumber \\
& \ +4k^3 \frac{n}{kp} x^{2p}-4 \frac{nk^2}{p} x^{2p-1}+4k^2
(-2s+\frac{n}{kp})x-\frac{2kn}{p}-\frac{2n k^2}{p}+O(\frac{1}{p^2})
\label{J prime formula 2}
\end{align}

It follows from (\ref{x0 estimate}), (\ref{epsilon estimate}), and
(\ref{J prime formula 2}) that
\begin{align}
\frac{1}{p} J^{\prime}(x) \ \geq \ 4k^2 \frac{5n}{3p}
\Big[\frac{ck}{2pn} \Big]^{\frac{1}{2p-1}}-\frac{2kn}{p}-\frac{2n
k^2}{p}+O(\frac{1}{p^2})>\frac{2kn(2k-1)}{p}>0 \label{J prime
formula 3}
\end{align}
for any $p>p_1(n, k, c)$ large enough and any $-1 \leq x \leq x_0$. Note
that in (\ref{J prime formula 3}), we have used
\begin{align}
\lim_{p \rightarrow +\infty} \Big[\frac{ck}{2pn}
\Big]^{\frac{1}{2p-1}}=1. \label{limit to 1}
\end{align}

This completes the proof of Lemma \ref{T positive}.
\end{proof}

Let use remark that if we consider $M_{2,1}$, the conclusion of
Theorem \ref{main any class} is not necessarily stronger than that
of Proposition \ref{canonical class}. The point is that the degree
of the generating polynomial $\phi(U)$ in Proposition \ref{any
class} might depend on the $c$ where $[-c, c]$ is the domain of
$\phi(U)$. In particular, the degree $p$ goes to infinity as $c$
approaches to $0$, while the generating function in Proposition
\ref{canonical class} is quartic polynomial. However, we am able to
show that the proof of Proposition \ref{any class} can be used to
establish the path-connectedness of all $U(n)$-invariant K\"ahler
metrics of $H>0$ on any Hirzebruch manifold $M_{n,k}$.

\begin{corollary}
The space of all $U(n)$-invariant K\"ahler metrics of $H>0$ on
$M_{n,k}$ is path-connected.
\end{corollary}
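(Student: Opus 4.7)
The plan is to combine the convexity within a fixed Kähler class from Corollary \ref{convexity} with a continuous reference family across classes. Any $U(n)$-invariant Kähler metric with $H>0$ has a generating function $\phi$ on some interval which, after translation, we normalize to $[-c, c]$ with $c \in (0, n/k)$, so its Kähler class (up to overall scale $t > 0$) is parametrized by $c$. Inside a fixed class, Corollary \ref{convexity} gives that the set of generating functions yielding $H>0$ is convex, hence path-connected. Therefore it suffices to exhibit, for each $(c, t)$, a reference metric $g_{c,t}$, depending continuously on $(c, t)$, and to note that any metric can first be continuously deformed within its Kähler class to the appropriate reference metric, and then the references themselves are joined by a continuous path as $(c, t)$ varies.

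To construct $\{g_c\}_{c \in (0, n/k)}$ (absorbing $t$ into a final multiplicative continuous deformation), I would piece together two families. On the initial subinterval $c \in (0, c_{*})$ with $c_{*} < n/(k(2k+1))$, use Hitchin's canonical examples $\phi_c(U) = c - U^2/c$ on $[-c, c]$, which give $H>0$ and depend smoothly on $c$. For $c \in [c_{*}/2, n/k)$, exhaust the interval by closed subintervals $[a_j, b_j]$ with $b_j \nearrow n/k$; on each $[a_j, b_j]$ choose a degree $p_j$ large enough (using $\epsilon_p \to 0$) so that Proposition \ref{any class} applies uniformly, and pick continuous choices $\mu(c), \alpha_2(c)$ satisfying the hypotheses there, yielding a continuous family of polynomial generating functions. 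On each nonempty overlap (between Hitchin's family and the first polynomial family, or between consecutive polynomial families of differing degrees), the two assignments produce metrics with $H>0$ in the \emph{same} Kähler class; by Corollary \ref{convexity} the linear combination is still $H>0$, furnishing a continuous transition between the two parametrizations.

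The main obstacle is that the generating functions live on varying domains $[-c, c]$, so continuity of $c \mapsto g_c$ must be interpreted intrinsically on $M_{n,k}$ rather than at the level of $\phi$. I would resolve this by the reparametrization $s = U/c \in [-1, 1]$ and $\psi_c(s) = \phi_c(cs)$, for which all the constructions used above give $\psi_c$ depending smoothly on $c$ (with boundary data $\psi_c(\pm 1) = 0$, $\psi_c'(\pm 1) = \mp 2c$). Under the Koiso--Sakane ansatz (\ref{def of g}), the resulting Kähler form on $M_{n,k}$, and the curvature components $A, B, C$ of Proposition \ref{curvature tensors}, depend continuously on $c$ in the $C^\infty$ topology, so the condition $H>0$ is an open condition preserved along the path. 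Finally, an overall scaling $g \mapsto tg$ for $t > 0$ preserves $H>0$ and gives a continuous path in the metric space, so combining the three families produces a continuous path between any two $U(n)$-invariant Kähler metrics with $H>0$, completing the proof.
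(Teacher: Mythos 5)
Your proposal is correct and follows essentially the same route as the paper: reduce via the convexity statement (Corollary \ref{convexity}) to constructing a continuous family of generating functions across the Kähler classes, and then produce that family from the polynomial examples of Proposition \ref{any class} using continuous dependence of the parameters $p$, $\delta_1$, $\delta_2$ on $c$ over compact subintervals. Your additional care with the overlap patching (again via convexity), the Hitchin family near $c=0$, the overall scaling, and the reparametrization to the fixed domain $[-1,1]$ fills in details the paper leaves implicit but does not change the argument.
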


\begin{proof}
In view of Corollary \ref{convexity}, it suffices to show that given
any $0<c_1<c_2<\frac{n}{k}$, we can construct a continuous family of
generating functions $\phi_c(U)$ where $U \in [-c,c]$ for any $c_1
\leq c \leq c_2$.

Such a family can be constructed following the proof of Proposition
\ref{any class}. In particular, if we examine (\ref{condition 1}),
(\ref{condition 2}), (\ref{condition 4}), (\ref{derivative
positive}), and (\ref{limit to 1}), we conclude by the continuous
dependence of parameters that for any $c \in [c_1, c_2]$ there
exists an sufficiently large integer $p=p(n,k)$ which is independent
of the choice of $c$, $\delta_1=\delta_1(p, n, k, c)$, and
$\delta_2=\delta_2(p, n, k, c)$ such that $\phi_c(U)$ defined by
(\ref{polynomial formula}) is a continuous path of K\"ahler metrics
with $H>0$.
\end{proof}

\vspace{0.3cm}

\subsection{Complete K\"ahler-Ricci solitons revisited}

In this subsection, we are interested in complete K\"ahler-Ricci
solitons in the following two cases.

(1) Compact shrinking K\"ahler-Ricci soliton on Hirzebuch manifolds
$M_{n,k} (k<n)$, which is the compactification of the total space of
$H^{-k} \rightarrow \mathbb{CP}^{n-1}$.

(2) Complete noncompact shrinking K\"ahler-Ricci soliton on the
total space of $H^{-k} \rightarrow \mathbb{CP}^{n-1}$ when $k<n$.

These were constructed by Cao \cite{Cao}, Koiso \cite{Koiso},
Feldman-Ilmanen-Knopf \cite{F-I-K}.

\begin{question} \label{shrinkers and H>0}

(1) What can we say about compact shrinking K\"ahler-Ricci soliton with
$H>0$?
(2) Are there any complete noncompact shrinking K\"ahler-Ricci
soliton with $H>0$?

\end{question}

Before answering Question \ref{shrinkers and H>0}, let us review the
construction of Cao, Koiso, Feldman-Ilmanen-Knopf. We follow Koiso's
approach in the compact case and it can be extended to the complete
noncompact case. Recall the K\"ahler metric $\tilde{g}$ on the
compactification of the $\mathbb{C}^{\ast}$-bundle obtained by
$H^{-k} \rightarrow \mathbb{CP}^{n-1}$. In the compact case, We make
the following assumption on $M_{n,k}$:

\begin{assumption} \label{canonical compactification}
K\"ahler metric $\tilde{g}$ on $M_{n,k}$ is in $2\pi c_1(M_{n,k})$.
i.e. there exists a function $f$ on $\hat{L}$ such that
\[
Ric(\tilde{g})-\tilde{\omega_g}=\sqrt{-1}\partial \bar{\partial} f.
\]
\end{assumption}

\begin{proposition}[Consequence of Assumption \ref{basic compactification}
and Assumption \ref{canonical compactification}] \label{Koiso
soliton condition}

Under Assumption \ref{basic compactification} and Assumption
\ref{canonical compactification}, if we further assume
$U_{min}=-D_{min}$, then $f$ in Assumption \ref{canonical
compactification} is given by
\begin{equation}
\frac{d \phi}{dU}+\frac{\phi}{Q}\frac{dQ}{dU}+2U-\phi \frac{df}{dU}
=0. \label{f equation}
\end{equation}
Moreover, we have $U_{max}=D_{max}$ and $Ric(g_0)=g_0$ on $M$.

\end{proposition}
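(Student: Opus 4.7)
The plan is to expand the soliton-type identity $\operatorname{Ric}(\tilde g)-\tilde\omega=\sqrt{-1}\,\partial\bar\partial f$ in Calabi-ansatz coordinates, separate it into a fiber $(0,\bar 0)$ equation and a base $(\alpha,\bar\beta)$ equation, integrate the fiber equation once, and then pin down the integration constant using the boundary data at the two compactifying divisors. By averaging $f$ over the fiber $S^{1}$-action (and the base $U(n)$-symmetry that lifts to $M_{n,k}$), we may assume $f=f(U)$, so everything reduces to ODEs in the single variable $U$.

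Using $\det\tilde g=2\phi(U)Q(U)\det g_{0}$ from Lemma \ref{basic compactification} and the identity $u\tfrac{d}{dt}=\phi\tfrac{d}{dU}$ together with Lemma \ref{Koiso local}, the ingredients are
\[
\tilde R_{0\bar 0}=-\phi\,\tfrac{d}{dU}\big(\phi'+\tfrac{\phi}{Q}Q'\big),\qquad \tilde R_{\alpha\bar\beta}=R(g_{0})_{\alpha\bar\beta}+\tfrac{1}{2}\big(\phi'+\tfrac{\phi}{Q}Q'\big)\Theta(L)_{\alpha\bar\beta},
\]
with $\tilde g_{0\bar 0}=2\phi$, $\tilde g_{\alpha\bar\beta}=(g_{0})_{\alpha\bar\beta}-U\,\Theta(L)_{\alpha\bar\beta}$, $\partial_{0}\partial_{\bar 0}f=\phi\,\tfrac{d}{dU}(\phi f')$, and $\partial_{\alpha}\partial_{\bar\beta}f=-\tfrac{1}{2}\phi f'\,\Theta(L)_{\alpha\bar\beta}$. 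Matching $(0,\bar 0)$ components, dividing through by $\phi>0$, and integrating once in $U$ produces
\[
\phi'+\tfrac{\phi}{Q}Q'+2U-\phi f'=C
\]
for a constant $C$ (taking the sign convention of Assumption \ref{canonical compactification}). Matching $(\alpha,\bar\beta)$ components and substituting this integrated identity collapses the $U$-dependence on the right, producing the purely base-level equation
\[
R(g_{0})_{\alpha\bar\beta}-(g_{0})_{\alpha\bar\beta}=-\tfrac{C}{2}\,\Theta(L)_{\alpha\bar\beta},
\]
whose consistency across all $U$ is guaranteed by Assumption \ref{assump 1} on the eigenvalues of $\Theta(L)$.

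To determine $C$, I would use the boundary Taylor expansions from Lemma \ref{basic compactification}. Near $U=U_{\min}$ one has $\phi(U_{\min})=0$ and $\phi'(U_{\min})=2$, so the integrated identity together with $U_{\min}=-D_{\min}$ yields $C=2(1-D_{\min})$. Applying the same smooth-compactification normalization at $M_{\max}$ (symmetric to the prescription $U_{\min}=-D_{\min}$ at $M_{\min}$) identifies $U_{\max}=D_{\max}$, while the expansion $\phi(U_{\max})=0$, $\phi'(U_{\max})=-2$ plugged into the integrated identity yields $C=2(U_{\max}-1)=2(D_{\max}-1)$. Equating the two expressions for $C$ forces $D_{\min}+D_{\max}=2$, hence $D_{\min}=D_{\max}=1$ and $C=0$; substituting $C=0$ back gives the claimed ODE, and reduces the base equation to $R(g_{0})=g_{0}$.

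The step I expect to require the most care is the symmetric-normalization argument that $U_{\max}=D_{\max}$ at the opposite compactification end, together with the sign bookkeeping for the $\phi f'$ term between the Ricci equation and Assumption \ref{canonical compactification}, so that $C=0$ is genuinely forced by matching the two boundary expressions for the integration constant rather than imposed as an additional input.
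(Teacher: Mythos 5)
Your overall strategy---splitting the identity $\operatorname{Ric}(\tilde g)-\tilde\omega_g=\sqrt{-1}\,\partial\bar\partial f$ into its $(0,\bar 0)$ and $(\alpha,\bar\beta)$ components, integrating the fiber equation once in $U$, and fixing the integration constant from the boundary behavior---is the standard Koiso--Sakane argument (the paper gives no proof of this proposition, deferring to \cite{KS1986} and \cite{Koiso}), and your reduction of the base component to $\mathrm{Ric}(g_0)=g_0$ once $C=0$ is correct. However, the determination of $C$ contains a genuine error: you evaluate the integrated identity at $U_{\min}$ as if the term $\frac{\phi}{Q}\frac{dQ}{dU}$ vanished there because $\phi(U_{\min})=0$. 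That is only true when $D_{\min}=1$. In general $M_{\min}$ has codimension $D_{\min}$, so $D_{\min}-1$ eigenvalues of $g_0^{-1}g_t$ vanish to first order at $U_{\min}$; hence $Q\sim \mathrm{const}\cdot(U-U_{\min})^{D_{\min}-1}$ and, combined with $\phi\sim 2(U-U_{\min})$, one gets
\[
\frac{\phi}{Q}\frac{dQ}{dU}\;\longrightarrow\;2(D_{\min}-1)\neq 0 .
\]
The correct evaluation is therefore $C=2+2(D_{\min}-1)+2U_{\min}=2(D_{\min}+U_{\min})=0$ by the hypothesis $U_{\min}=-D_{\min}$; the analogous expansion at the other end, where $\frac{\phi}{Q}\frac{dQ}{dU}\to-2(D_{\max}-1)$, then gives $0=C=-2D_{\max}+2U_{\max}$, so that $U_{\max}=D_{\max}$ is obtained as a \emph{conclusion} rather than assumed.

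By contrast, your version assumes $U_{\max}=D_{\max}$ by ``symmetric normalization'' and equates two incorrect expressions for $C$, forcing $D_{\min}+D_{\max}=2$, i.e.\ $D_{\min}=D_{\max}=1$. That conclusion is false in the generality in which the proposition is stated: $\mathbb{CP}^n$ realized as the compactification of $H^{-1}\to\mathbb{CP}^{n-1}$ with $f=0$ satisfies all the assumptions with $D_{\min}=n$, $D_{\max}=1$, $U_{\min}=-n$, $U_{\max}=1$. Deriving a false constraint is the symptom of the dropped boundary term. For the Hirzebruch application itself one has $D_{\min}=D_{\max}=1$ and $Q=(1+\frac{k}{n}U)^{n-1}$ stays bounded away from zero on $[U_{\min},U_{\max}]$, so your computation accidentally produces the right constant there; but as a proof of the proposition as stated, the argument fails at the boundary evaluation, which is precisely where the codimensions $D_{\min}$, $D_{\max}$ are supposed to enter.
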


If we further assume that $f$ in Assumption \ref{canonical
compactification} is give by $f=-\alpha U$ for some constant $\alpha $,
then it follows that $\nabla f=-\frac{\alpha}{2}H$ is a holomorphic
vector field. Equation (\ref{f equation}) becomes the shrinking
soliton equation.
\begin{equation}
\frac{d \phi}{dU}+\frac{\phi}{Q}\frac{dQ}{dU}+2U-\alpha \phi=0.
\label{shrinking equation}
\end{equation}

Note that $Q=(1+\frac{k}{n}U)^{n-1}$, so Equation (\ref{shrinking
equation}) takes the form
\begin{equation}
\frac{d \phi}{dU}+\frac{k(n-1)}{n+kU}\phi+2U-\alpha \phi=0.
\label{shrinking equation simple}
\end{equation}

Equation (\ref{shrinking equation simple}) can be solved explicitly:
\begin{equation}
\phi(U)=\frac{2\eta(U,\alpha)}{Q(U)}-\frac{2
e^{\alpha(U-U_{min})}}{Q(U)} \eta(U_{min},\alpha),  \label{formal
further}
\end{equation}
where $\eta(U,\alpha)$ is a polynomial of degree $n$ defined by
\begin{equation}
\int xe^{-\alpha x}Q(x) dx=-e^{-\alpha x} \eta(x,\alpha).
\label{integral}
\end{equation}

In the compact case it follows from Proposition \ref{Koiso soliton
condition} that $U_{min}=-1$ and $U_{max}=1$, therefore $\phi>0$ is
a smooth function on $(-1, 1)$ with $\frac{d \phi}{dU}|_{U=-1}=2$
and $\frac{d \phi}{dU}|_{U=1}=-2$. While in the noncompact case we
set $U_{min}=-1$ and $U_{max}=+\infty$, therefore $\phi>0$ is a
smooth function on $(-1, +\infty)$ with $\frac{d
\phi}{dU}|_{U=-1}=2$.

\begin{theorem}[Koiso \cite{Koiso}, Cao \cite{Cao},
Feldman-Ilmanen-Knopf \cite{F-I-K}] For any integer $1 \leq k<n$,
consider $(\mathbb{CP}^{n-1}, g_0)$ where $g_0$ is the Fubini-Study metric
with $Ric (g_0)=g_0$.

(1) We assume that the shrinking soliton metric  on $M_{n,k}$ is of
the form (\ref{def of g}), and satisfies Assumptions \ref{assump 1},
\ref{basic compactification}, and \ref{canonical compactification}.
Then there exists a unique shrinking K\"ahler-Ricci soliton on each
$M_{n,k}$ when $1 \leq k<n$. It is unique in the sense that the
value $\alpha>0$ in the associated holomorphic vector field $\nabla
f=-\frac{\alpha}{2}H$ is determined by the unique solution of
$\phi(1)=0$. Cao proved that the Ricci curvature of the soliton
metric is positive on $M_{n,k}$ if and only if $k=1$.

(2) There exists a unique complete shrinking K\"ahler-Ricci soliton
on the total space of $L^{k} \rightarrow \mathbb{CP}^{n-1}$ whose
value $\alpha>0$ is determined by the unique solution to
$\eta(-1,\alpha)=0$.
\end{theorem}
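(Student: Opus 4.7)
The plan is to explicitly solve the linear first-order ODE \eqref{shrinking equation simple} by the integrating factor, translate the boundary requirements of Lemma \ref{basic compactification} into a single transcendental equation in $\alpha$, and then establish existence, uniqueness and positivity by elementary monotonicity/asymptotic analysis. The first step is to notice that with $Q(U)=(1+\tfrac{k}{n}U)^{n-1}$ one has $Q'/Q=k(n-1)/(n+kU)$, so \eqref{shrinking equation simple} is equivalent to
\[
\bigl(Q(U)\,\phi(U)\,e^{-\alpha U}\bigr)' \;=\; -2UQ(U)\,e^{-\alpha U}.
\]
Integrating and imposing $\phi(-1)=0$ recovers formula \eqref{formal further}, where $\eta(\cdot,\alpha)$ is the unique polynomial of degree $n$ satisfying $\alpha\eta-\eta_x=xQ(x)$. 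Note that once $\phi(\pm 1)=0$, the ODE immediately forces $\phi'(-1)=2$ and $\phi'(1)=-2$, matching the smooth compactification condition in Lemma \ref{basic compactification}.

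For part (1), the condition $\phi(1)=0$ is equivalent, after one integration by parts, to
\[
I(\alpha) \;:=\; \int_{-1}^{1} x\,e^{-\alpha x}\Bigl(1+\tfrac{k}{n}x\Bigr)^{n-1}dx \;=\; 0.
\]
I would show: (i) $I'(\alpha)=-\int_{-1}^{1}x^2 e^{-\alpha x}Q(x)\,dx<0$, so $I$ is strictly decreasing; (ii) $I(0)>0$, which follows from writing $I(0)=\int_0^1 x\bigl[(1+\tfrac{k}{n}x)^{n-1}-(1-\tfrac{k}{n}x)^{n-1}\bigr]dx$ and using $k<n$ to keep the base positive; (iii) $I(\alpha)\to-\infty$ as $\alpha\to\infty$ by Laplace asymptotics around $x=-1$, where the integrand is $\sim(1-\tfrac{k}{n})^{n-1}(-1)e^{\alpha}$. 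This yields a unique $\alpha>0$. Positivity of $\phi$ on $(-1,1)$ then comes from the identity $\bigl(Q\phi e^{-\alpha U}\bigr)'=-2UQ e^{-\alpha U}$: this quantity vanishes at $\pm 1$, increases on $(-1,0)$ and decreases on $(0,1)$, so it is strictly positive on the interior, hence so is $\phi$.

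For part (2), the obstruction is the term $-\tfrac{2e^{\alpha(U+1)}\eta(-1,\alpha)}{Q(U)}$ in \eqref{formal further}, which grows exponentially at infinity, whereas $\eta(U,\alpha)/Q(U)$ only grows linearly; hence completeness and positivity force $\eta(-1,\alpha)=0$. Integrating by parts over $[-1,\infty)$ (legitimate because $\alpha>0$ kills the boundary term at infinity) gives
\[
e^{\alpha}\eta(-1,\alpha) \;=\; J(\alpha) \;:=\; \int_{-1}^{\infty} t\,e^{-\alpha t}\Bigl(1+\tfrac{k}{n}t\Bigr)^{n-1}dt.
\]
Again $J'(\alpha)<0$; moreover $J(0^+)=+\infty$ since the integrand grows polynomially at $+\infty$, and $J(\alpha)\to -\infty$ as $\alpha\to\infty$ by the same Laplace contribution near $t=-1$. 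This yields a unique $\alpha>0$. With this $\alpha$ one has $\phi(U)=\tfrac{2e^{\alpha U}}{Q(U)}\int_U^{\infty}sQ(s)e^{-\alpha s}ds$, which is manifestly positive for $U\ge 0$ and, for $-1<U<0$, positive because $\int_{-1}^{\infty}sQ(s)e^{-\alpha s}ds=0$ lets us rewrite the tail integral as $-\int_{-1}^U sQ(s)e^{-\alpha s}ds$, whose integrand is negative on that interval.

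The hardest part is Cao's claim that the Ricci curvature is positive on $M_{n,k}$ if and only if $k=1$. This is not a soft monotonicity argument but a direct curvature computation: plug the explicit $\phi$ obtained above into the Ricci formulas from Lemma \ref{Koiso local}, namely $\tilde R_{0\bar 0}=-u\frac{d}{dt}\bigl(u\frac{d}{dt}\log(u^2p)\bigr)$ and $\tilde R_{\alpha\bar\beta}=R_{0\alpha\bar\beta}+\tfrac{1}{2}u\frac{d}{dt}\log(u^2p)\Theta(L)_{\alpha\bar\beta}$, and analyze the sign of the mixed component at the boundary $U=\pm 1$. Using the soliton ODE to simplify $\log(u^2 p)$, one reduces the question to a polynomial inequality in $U$ whose validity on $[-1,1]$ separates $k=1$ from $k\ge 2$; this is where I would expect the real work to lie, and where I would follow Cao's original calculation rather than reinvent it.
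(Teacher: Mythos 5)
Your proposal is correct in substance, and it is worth noting that the paper does not actually prove this theorem: it is stated as a citation to Koiso, Cao, and Feldman--Ilmanen--Knopf, with the paper only supplying the surrounding machinery --- the soliton ODE (\ref{shrinking equation simple}), its explicit solution (\ref{formal further}) via the polynomial $\eta$, and, for the noncompact case, the reduction of $\eta(-1,\alpha)=0$ to the degree-$n$ polynomial $\chi(\alpha)$ whose unique positive root is obtained from $\chi(0)>0$, $\chi(\infty)<0$ and Descartes' rule of signs. Your setup (integrating factor $Q e^{-\alpha U}$, the relation $\alpha\eta-\eta_x=xQ$, and the observation that $\phi(\pm1)=0$ forces $\phi'(\pm1)=\mp2$ through the ODE) coincides exactly with the paper's. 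Where you genuinely diverge is in the existence/uniqueness analysis: instead of expanding $\eta(\pm1,\alpha)$ into an explicit polynomial, you characterize the matching condition as the vanishing of the integrals $I(\alpha)=\int_{-1}^{1}xe^{-\alpha x}Q\,dx$ and $J(\alpha)=\int_{-1}^{\infty}te^{-\alpha t}Q\,dt$ and get a unique positive root from strict monotonicity plus the sign of the endpoints ($I(0)>0$ by the symmetrization argument, $I,J\to-\infty$ by Laplace asymptotics at $x=-1$). This is cleaner and more self-contained than the sign-counting on $\chi$, and it has the added benefit of yielding the positivity of $\phi$ on the interior (via the monotone behavior of $Q\phi e^{-\alpha U}$ and the tail-integral identity), which neither the paper nor the cited sketch spells out. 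The one piece you do not prove --- Cao's assertion that the compact soliton has positive Ricci curvature iff $k=1$ --- is equally left to the original reference by the paper, so deferring it is consistent with the status of the statement as a quoted theorem; a full verification would require substituting the explicit $\phi$ into the Ricci formulas of Lemma \ref{Koiso local} as you indicate.
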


Indeed, by (\ref{shrinking equation simple}) and Proposition
\ref{curvature tensors}, it is easy to write down the curvature tensors for
shrinking solitons.
\begin{align*}
&A=-\frac{1}{2}[{\alpha}^2-\frac{2 \alpha
k(n-1)}{n+kU}-\frac{k^2n(n-1)}{(n+kU)^2}]
\phi+(\alpha-\frac{k(n-1)}{n+kU})U+1\\
&B=\frac{[k^2 n-k(n+kU) \alpha]\phi+2k(n+kU)U}{2(n+kU)^2},\\
&C=\frac{2(n+kU)-k^2 \phi}{(n+kU)^2}.
\end{align*}

It is easy to see that along the zero section $U=-1$,
$2B+\sqrt{AC}>0$ implies that
\begin{equation}
\alpha<\alpha_0(n,k) \doteq \frac{(n-2k)(k+1)}{n-k} \label{critical
alpha}
\end{equation}
For example, when $n=2, k=1$, the necessary condition for $H>0$ is
$\alpha<\alpha_0(2,1)=0$, however the corresponding $\alpha$ on
$M_{2,1}$ is the unique positive root of the equation
$e^{2\alpha}(-{\alpha}^2+2)-3{\alpha}^2-4 \alpha-2=0$ ($\alpha
\simeq 0.5276195199$). Therefore the Cao-Koiso shrinking soliton on
$M_{2,1}$ does not satisfy $H>0$.

Next we analyze the noncompact case in more details. Consider the
following polynomial which is of degree $n$ in terms of $\alpha$.
\[
{\alpha}^{n+1}\eta(x,\alpha) = -{\alpha}^{n+1} e^{\alpha x} \int
xe^{-\alpha x} (1+\frac{k}{n}x)^{n-1} dx
\]
Making use of the integration formula
\[
\int z^n e^z dz=(\sum_{l=0}^{n} (-1)^{n-l} \frac{n !}{l !} z^l)
e^z+C,
\] Some routine calculation leads to
\begin{align*}
{\alpha}^{n+1}\eta(U,\alpha)=(\frac{k}{n})^{n-1}\Big(\sum_{l=1}^{n}
\frac{n!}{l!} \frac{(n+kU)^{l-1}(n+kU-l)}{k^{l}}
{\alpha}^{l}+n!\Big).
\end{align*}
Therefore the value of ${\alpha}_{\ast}$ which solves the shrinking
soliton equation, namely, the root of the polynomial
$\eta(-1,\alpha)$, is reduced to root of the following polynomial of
degree $n$
\[
\chi(\alpha)=\sum_{l=1}^{n} \frac{n!}{l!}
\frac{(n-k)^{l-1}(n-k-l)}{k^{l}} {\alpha}^{l}+n!.
\]

Similarly as in Feldman-Ilmanen-Knopf (see the equation $f$ in P197
\cite{F-I-K}), since $\chi(\infty)<0$ and $\chi(0)>0$, Descartes'
rule of signs implies there exists a unique positive root for
$\chi(\alpha)$. Call it ${\alpha}_{\ast}$. For this choice
$\alpha={\alpha}_\ast$, $\phi(U)$ has the asymptotical behavior
$\phi \sim \dfrac{1}{\alpha_{\ast}}U$ as $U \rightarrow \infty$, and
it shows that the soliton metric is complete along infinity.

Now we are interested in a more precise estimate of
${\alpha}_{\ast}$. First note that
${\alpha}_{\ast} > k$. To see this, we check
\begin{align}
\chi(\alpha)&= \ \sum_{l=1}^{n} \frac{n!}{l!}
\frac{(n-k)^{l-1}(n-k-l)}{k^{l}} {\alpha}^{l}+n!  \nonumber \\
&= \ \Big(\frac{\alpha (n-k)}{k}\Big)^n+\sum_{l=0}^{n-1} \frac{n!}{l!}
(\frac{\alpha (n-k)}{k})^l (1-\frac{\alpha}{k}). \label{rough
estimate}
\end{align}

We propose the following conjecture on a precise estimate on
$\alpha_{\ast}$.

\begin{conjecture}  \label{root range}
For any $1 \leq k<n$, ${\alpha}_{\ast}$, which is the unique
positive root of of the polynomial $\chi(\alpha)$, satisfies
$\alpha_0(n,k)<{\alpha}_{\ast}<k+1$. If so, then none of
Feldman-Ilmanen-Knopf shrinking solitons have positive holomorphic
sectional curvature.
\end{conjecture}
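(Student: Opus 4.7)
My approach splits the conjecture into two independent inequalities, $\chi(\alpha_0) > 0$ and $\chi(k+1) < 0$, corresponding respectively to the lower and upper bounds on $\alpha_\ast$. The ``if so'' conclusion does not require the full conjecture: by (\ref{critical alpha}) the necessary condition for $H > 0$ at $U = -1$ is $\alpha < \alpha_0(n,k)$, and the FIK soliton has $\alpha = \alpha_\ast$, so $\alpha_\ast \geq \alpha_0$ alone already forces $H > 0$ to fail. Since $\chi(0) = n! > 0$ and its leading coefficient is negative, and since $\chi$ has a unique positive root $\alpha_\ast$, the two desired inequalities are equivalent to the strict sign conditions $\chi(\alpha_0) > 0$ and $\chi(k+1) < 0$. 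Throughout I will use the factorization from (\ref{rough estimate}),
\begin{equation*}
\chi(\alpha) = \Bigl(\tfrac{\alpha(n-k)}{k}\Bigr)^{n} + \Bigl(1 - \tfrac{\alpha}{k}\Bigr) \sum_{l=0}^{n-1} \frac{n!}{l!}\Bigl(\tfrac{\alpha(n-k)}{k}\Bigr)^{l}.
\end{equation*}

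For the lower bound, a direct computation gives $1 - \alpha_0/k = (k^2 + 2k - n)/(k(n-k))$. When $n \leq k^2 + 2k$ this factor is nonnegative, so every term in the sum above is nonnegative and the leading term $(\alpha_0(n-k)/k)^n$ is strictly positive (for $n > 2k$; the subcase $n \leq 2k$ is vacuous because $\alpha_0 \leq 0 < \alpha_\ast$). This immediately recovers the statement in the introduction that no FIK shrinker has $H > 0$ for $k < n \leq k^2 + 2k$. For the remaining range $n > k^2 + 2k$ the sign flips and I would divide $\chi(\alpha_0)$ by $n!$, multiply by $e^{-a'}$ where $a' = \alpha_0(n-k)/k = (n-2k)(k+1)/k$, and rewrite the inequality as
\begin{equation*}
P(Y = n) \;>\; \Bigl(\tfrac{\alpha_0}{k} - 1\Bigr) \, P(Y < n), \qquad Y \sim \mathrm{Poisson}(a'),
\end{equation*}
to be attacked by controlling the ratios $P_{l+1}/P_l = a'/(l+1)$ together with Chernoff-type tail bounds.

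For the upper bound, the analogous reduction with $a = (n-k)(k+1)/k$ turns $\chi(k+1) < 0$ into
\begin{equation*}
k \, P(X = n) \;<\; P(X < n), \qquad X \sim \mathrm{Poisson}(a),
\end{equation*}
equivalently $\sum_{m=1}^{n} \prod_{j=0}^{m-1}\tfrac{n-j}{a} > k$. Using $n/a = k/[(k+1)(1 - k/n)]$, as $n \to \infty$ with $k$ fixed this sum tends to $\sum_{m=1}^{\infty}\bigl(k/(k+1)\bigr)^{m} = k$, so the inequality is asymptotically tight. The main obstacle is quantifying this $O(1/n)$ gap uniformly in $n$: a plausible route is to combine a careful Stirling expansion of $a^n/n!$ with the Poisson normal approximation (the excess $a - n = (n - k^2 - k)/k$ is of size $n/k$, and $(a-n)/\sqrt{a}$ has a predictable sign). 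Alternatively, one might try induction on $n$ exploiting the linear recurrence for $\eta(x,\alpha)$ that comes from differentiating its defining identity (\ref{integral}). In either approach the hardest subcase is $n \gg k^2$, where the relevant Poisson distributions straddle $n$ with substantial mass on both sides; this asymptotic tightness of the upper bound is the part I would expect to cost the most work.
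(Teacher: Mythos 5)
The statement you are asked to prove is labeled a \emph{conjecture} in the paper, and the paper itself does not prove it in full: it only establishes the case $k<n\leq k^2+2k$, and it does so by exactly the computation in the first half of your proposal, namely evaluating the factorization (\ref{rough estimate}) at $\alpha=\alpha_0(n,k)$ and observing that $1-\alpha_0/k=(k^2+2k-n)/(k(n-k))\geq 0$ precisely when $n\leq k^2+2k$, which makes every term of $\chi(\alpha_0)$ nonnegative and the leading term positive. Your observation that the ``if so'' conclusion requires only the lower bound $\alpha_\ast\geq\alpha_0$ (via the necessary condition (\ref{critical alpha}) at $U=-1$) is also correct and is how the paper uses the partial result. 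So up to that point your argument coincides with the paper's and is complete.

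Beyond that, however, what you have is a program rather than a proof. The Poisson reformulation is a correct and illuminating repackaging of the identity $\sum_{l=0}^{n-1}\frac{a^l}{l!}e^{-a}=P(Y<n)$, and your computation that $\sum_{m\geq 1}\bigl(k/(k+1)\bigr)^m=k$ correctly explains why $\chi(k+1)<0$ is asymptotically tight as $n\to\infty$; but neither the Chernoff/Stirling route for the regime $n>k^2+2k$ nor the proposed induction on $n$ is actually carried out, and the tight case is exactly where a crude tail bound will not close the $O(1/n)$ gap. You are candid about this, which is appropriate: the general statement remains open in the paper as well (the authors support it only by numerical experiments). If your goal is to match the paper, you are done after the first paragraph of your proposal; if your goal is to prove the conjecture, the essential work has not yet been done.
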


Argue similarly as in (\ref{rough estimate}), we can show that
Conjecture \ref{root range} is indeed true if $k<n \leq k^2+2k$. It
is very likely that it is true in general, as numerical experiments
suggest.

On the other hand, compact shrinking solitons on $M_{n,k}$ could
have positive holomorphic sectional curvature as the ratio
$\frac{n}{k}$ grows larger.

\begin{proposition} (Some compact shrinking solitons on $M_{n,k}$ have $H>0$).
If we fix $k=1$, then the lowest dimension example of Cao-Koiso
shrinking solitons which have $H>0$ is $n=3$. On $M_{3,1}$ its local
pinching constant is $\frac{1-\alpha}{(2-\alpha)(5-\alpha)} \simeq
0.05587$ where $\alpha \simeq 0.6820161326$. If $k=2$, the first
compact example with $H>0$ is on $M_{7,2}$, where $\alpha \simeq
1.742423694$.
\end{proposition}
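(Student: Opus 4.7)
The plan is to combine the explicit formula for the Cao-Koiso soliton's generating function with the three curvature quantities $A$, $B$, $C$ displayed just after (\ref{shrinking equation simple}). From (\ref{formal further}) and $Q(U) = (1+\frac{k}{n}U)^{n-1}$, the admissible soliton parameter $\alpha_\ast$ is the unique positive root of the transcendental boundary equation $e^{2\alpha}\eta(-1,\alpha)=\eta(1,\alpha)$, where $\eta(U,\alpha)$ is the degree-$n$ polynomial from (\ref{integral}). This equation can be solved numerically to any precision for each $(n,k)$, yielding the quoted values $\alpha_\ast\simeq 0.6820$ for $(n,k)=(3,1)$ and $\alpha_\ast\simeq 1.7424$ for $(n,k)=(7,2)$.

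Next I would apply the necessary condition already derived in (\ref{critical alpha}), namely $\alpha_\ast<\alpha_0(n,k)=\frac{(n-2k)(k+1)}{n-k}$, to rule out the lower-dimensional candidates. For $k=1$ we have $\alpha_0(2,1)=0$, so $M_{2,1}$ fails immediately (as noted right after (\ref{critical alpha})); at $n=3$, $\alpha_0(3,1)=1$, and the numerical value $\alpha_\ast\simeq 0.6820$ comfortably clears the threshold. For $k=2$: $\alpha_0(n,2)\le 0$ for $n\le 4$; for $n=5,6$ we have $\alpha_0=1$ and $3/2$ respectively, and one verifies from the boundary equation that $\alpha_\ast$ exceeds these values; finally $\alpha_0(7,2)=9/5$ and $\alpha_\ast\simeq 1.7424<9/5$. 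This isolates the two specific manifolds claimed.

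The main obstacle is the third step: promoting positivity of $H$ from the zero section $U=-1$ to all of $[-1,1]$. Using $\phi$ given explicitly by (\ref{formal further}) and the formulas for $A,B,C$, one first checks $A>0$ and $C>0$ throughout $[-1,1]$ by monotonicity (both are smooth scalar functions of $U$ that remain positive at the endpoints, and the derivative expressions simplify via the soliton ODE (\ref{shrinking equation simple})). The decisive inequality is $AC-4B^2>0$. Since equality in (\ref{critical alpha}) corresponds precisely to $AC-4B^2=0$ at $U=-1$, the infimum of $AC-4B^2$ on $[-1,1]$ is expected to occur at $U=-1$, so one needs to show monotonicity of $AC-4B^2$ (or lack of interior minima below $(AC-4B^2)(-1)$) on $[-1,1]$. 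In practice this reduces to studying the zeros of a single explicit function of $U$ and can be closed off by a rigorous interval check, since the relevant expressions are polynomial-times-exponential in $U$ with integer dimensions fixed.

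Finally, the pinching constant follows by evaluating at $U=-1$, where $\phi(-1)=0$. Substituting gives $A(-1)=1-\alpha+\frac{k(n-1)}{n-k}$, $B(-1)=-\frac{k}{n-k}$, $C(-1)=\frac{2}{n-k}$. For $(n,k)=(3,1)$ these become $2-\alpha,\ -\tfrac12,\ 1$, and since $A>C$ the maximum of $H$ at $U=-1$ is $A$, while the minimum is $\tfrac{AC-4B^2}{A+C-4B}=\tfrac{1-\alpha}{5-\alpha}$; the ratio is $\tfrac{1-\alpha}{(2-\alpha)(5-\alpha)}$, which evaluates to $\simeq 0.05587$ at $\alpha\simeq 0.6820$. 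The same scheme applied to $(7,2)$ yields the quoted numerics and completes the proof.
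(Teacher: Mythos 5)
Your proposal is correct and follows essentially the same route as the paper: determine $\alpha_\ast$ numerically from the boundary condition $\phi(1)=0$, use the necessary condition $\alpha_\ast<\alpha_0(n,k)$ at the zero section $U=-1$ to eliminate the lower-dimensional cases, promote positivity to all of $[-1,1]$ by a direct monotonicity/interval check, and read off the pinching constant from $A(-1)=2-\alpha$, $B(-1)=-\tfrac12$, $C(-1)=1$. The only (harmless) differences are that the paper tracks the sharp quantity $(2B+\sqrt{AC})(n+kU)^2$ being increasing on $[-1,1]$ rather than your sufficient-but-stronger inequality $AC-4B^2>0$, and that it justifies evaluating the pinching ratio at $U=-1$ via an explicit piecewise formula for $\min_{v}H/\max_{v}H$ with a transition point $U_\ast$ where $2B=C$, a step your write-up asserts without argument.
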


Let us explain the calculation in the case of $M_{3,1}$. In this
case the corresponding $\alpha$ is the unique positive solution of
the following equation
\[
16{\alpha}^3+24{\alpha}^2+18{\alpha}+6-e^{2{\alpha}}
(-4{\alpha}^3+6{\alpha}+6)=0.
\]In particular, $\alpha \simeq 0.6820161326 <\alpha_0 (3,1)=1.5$
where $\alpha_{0}(n,k)$ defined in (\ref{critical alpha}). One can
show that that $(2B+\sqrt{AC})(n+kU)^2$ is increasing on $U \in
[-1,1]$ by a direct calculation, therefore we have $H>0$. In
general, for any $M_{n,1}$ with $n \geq 3$, one could expect to
verify that $\alpha<\alpha_0 (n,k)$ and $(2B+\sqrt{AC})(n+kU)^2$ is
increasing on $U \in [-1,1]$. Therefore the Cao-Koiso shrinking
soliton on Hirzebruch manifold $M_{n,1}$ have $H>0$ for any $n \geq
3$.

Let us calculate the local holomorphic pinching constant of
Cao-Koiso soliton on $M_{3,1}$. A similar argument as in Example
\ref{canonical class} shows that
$$
\frac{\min_{||v||=1} H(U, v)}{\max_{||v||=1} H(U, v)}=
 \left\{ \begin{array}{lll}
\frac{AC-4B^2}{A(A+C-4B)},  &  U \in [-1, U_{\ast}]\\
\mbox{} & \mbox{} \\
\frac{C}{A}, &  U \in [U_{\ast}, 1]
\end{array} \right.
$$
Here $U_{\ast}$ is the solution of $2B=C$ on $[-1,1]$, whose
numerical value is about $-0.5.73003$. Therefore the pinching
constant is obtained at $U=-1$, which is $\frac{1-E}{(2-E)(5-E)}
\simeq 0.05587$.

Let us remark that if we drop the assumption of the shrinking
soliton, it is easy to write down examples of complete K\"ahler
metrics of $H>0$ on the total space of $H^{-k} \rightarrow
\mathbb{CP}^1$. For instance, we have

\begin{example} Define a function
$\omega(U)$ on $[-\frac{1}{2}, +\infty)$ as follows:
$$ \omega(U)= \left\{  \begin{array}{lll}
 \frac{1}{2}-2U^2, &  U \in [-\frac{1}{2}, -\frac{1}{4})\\
 & \\
 c(U+2)+\frac{1}{2}\ln(U+2), & U \in [-\frac{1}{4}, \infty)
 \end{array} \right. $$
where $c=\frac{3}{14}-\frac{2}{7} \ln(\frac{7}{4}) \sim 0.05439$ so
that $\omega$ is continuous at $U=-\frac{1}{4}$. Choose a small
number $\delta >0$ so that $\omega$ admits a convex smoothing $\phi$
which equals to $\omega$ except inside $(-\frac{1}{4}-\delta,
-\frac{1}{4}+\delta)$, Note that for $\omega$, both $C>0$ and
$2B+\sqrt{AC}$ have positive lower bounds in $(-\frac{1}{4}-\delta,
-\frac{1}{4})$ and $(-\frac{1}{4}, -\frac{1}{4}+\delta)$. It
guarantees the existence of a convex smoothing $\phi$ which in turns
gives a complete K\"ahler metric with $H>0$ on the total space of $L^{-1} \rightarrow \mathbb{CP}^1$. The metric actually has positive bisectional curvature outside a compact subset.
Moreover, its bisectional curvatures decay quadratically along the
infinity. Asymptotically such a metric has a conical end and there
exist holomorphic functions with polynomial growth on it.
\end{example}

\subsection{$H>0$ is not preserved along the K\"ahler-Ricci flow}

Recently there have been much progress on K\"ahler-Ricci flow with
Calabi-symmetry on Hirzebruch manifolds $M_{n,k}$. See for example
Zhu \cite{Zhu}, Weinkove-Song \cite{SW2011}, Fong \cite{Fong}, and
Guo-Song \cite{GuoSong}. In this subsection, we apply Hitchin's
example and new examples constructed in Theorem \ref{main any class}
to show that in general $H>0$ is not preserved by the K\"ahler-Ricci
flow.

These results imply that if the initial metric $g_0$ is of
$U(n)$-symmetry and in the K\"ahler class
$\frac{b_0}{k}[E_{\infty}]-\frac{a_0}{k}[E_0]$ where $b_0>a_0>0$,
then the flow always develops a singularity in finite time. Let
$T<\infty$ denote the maximal existence time. Their results can be
summarized as:

(1) Suppose that the initial metric $g_0$ satisfies
$\frac{|E_{\infty}|}{|E_0|}=\frac{b_0}{a_0}=\frac{n+k}{n-k}$, where
$|E_0|$ denotes the volume of the divisor $E_0$ with respect to
$g_0$. In this case the K\"ahler class of $g_0$ is proportional to
the anti-canonical class of $M_{n,k}$. Then the K\"ahler-Ricci flow
shrinks the fiber and the base uniformly and collapses to a point.
The rescaled flow converges to the Cao-Koiso soliton on $M_{n,k}$
(\cite{Zhu}).

(2) If $g_0$ satisfies $\frac{b_0}{a_0}<\frac{n+k}{n-k}$ , then the
K\"ahler-Ricci flow shrinks the fiber first, and the flow collapses
to the base $\mathbb{CP}^{n-1}$ (\cite{SW2011}). The rescaled flow
converges to $\mathbb{C}^{n-1} \times \mathbb{CP}^1$ (\cite{Fong}).

(3) If $g_0$ satisfies $\frac{b_0}{a_0}>\frac{n+k}{n-k}$, then the
K\"ahler-Ricci flow shrinks the zero section $E_0$ first and hence
`contracts the exceptional divisor' (\cite{SW2011}). The rescaled
flow converges to the Feldman-Ilmanen-Knopf shrinking soliton on the
total space of $H^{-k} \rightarrow \mathbb{CP}^{n-1}$
(\cite{GuoSong}).

(4) If $n \leq k$, then the K\"ahler-Ricci flow shrinks the fiber
first, and the flow converges to the base $\mathbb{CP}^{n-1}$ in the
Gromov-Hausdorff sense as $t \rightarrow T$ (\cite{SW2011}).

Based on Hitchin's examples reformulated in Example \ref{other
class} and Proposition \ref{any class}, we have the following
examples of the K\"ahler-Ricci flow.

On $M_{2,1}$, let us take the metric in the anti-canonical class
constructed in Example \ref{canonical class} as the initial metric.
Then the normalized flow converges to the Cao-Koiso soliton.
Unfortunately, the positivity of $H$ breaks down. Therefore, in
general, $H>0$ is not preserved along the K\"ahler-Ricci flow. If
instead we start from initial  metric corresponding to
$\phi_{c}(U)=c-\frac{x^2}{c}$ on $[-c,c]$ s in Example \ref{other
class}, where $0<c<\frac{2}{3}$, then the limiting metric of the
unnormalized flow is $(\mathbb{CP}^1, cg_{FS})$. In this case, it is
not clear whether $H>0$ is preserved, or how the holomorphic
pinching constant of $g(t)$ evolves.

On $M_{3,1}$, if we pick initial metric as the Cao-Koiso shrinking
soliton, it is a fixed point of the normalized flow, therefore the
holomorphic pinching constant remains constant.

On $M_{4,1}$, if we pick initial metric by the examples
$\phi_{c}(U)=c-\frac{x^2}{c}$ on $[-c,c]$ for any $0<c<\frac{4}{3}$. Then all
three cases mentioned above could occur. For $c=1$, the
normalized flow evolves $\phi_1(U)$ to the Cao-Koiso soliton. While
the initial metric has the holomorphic pinching constant $2/27
\simeq 0.074$, the limit metric has the holomorphic pinching
constant $\simeq 0.095$. It indeed improves after a long time,
even though we do not know the short time effect of it. If
$1<c<\frac{4}{3}$, then the normalized flow evolves $\phi_{c}(U)$ to the
limiting Feldman-Ilmanen-Knopf shrinking soliton on the total space
of $L^{-1} \rightarrow \mathbb{CP}^{3}$. However the limiting
soliton no longer has $H>0$, and once again we see that $H>0$ is not
preserved under the K\"ahler-Ricci flow.

Therefore a natural question arises, namely,  is there an
effective way to construct a one-parameter family of deformation of
K\"ahler metrics with $H>0$? It would be ideal if the holomorphic
pinching constant could enjoy some monotonicity properties along this
deformation.

\vspace{0.5cm}

\section{K\"ahler metrics of $H>0$ from the submanifold point of view}

In this section, we discuss holomorphic pinching of the canonical
K\"ahler-Einstein metrics on some K\"ahler $C$-spaces. In general,
we would like to discuss the question of constructing $H>0$ metric
from the submanifold point of view.

\begin{proposition}
Consider the flag threefold, or more
generally, let $M$ be the hypersurface in $\mathbb{CP}^n \times
\mathbb{CP}^n$ defined by
\begin{equation}
\sum_{i=1}^{n+1} z_i w_i = 0,  \label{defining flag}
\end{equation}
where $n\geq 2$ and $([z],[w])$ are the homogeneous coordinates. Let
$g$ be the restriction on $M$ of the product of the Fubini-Study
metrics (each of which has $H=2$). Then the holomorphic sectional
curvature of $g$ is between $2$ and $\frac{1}{2}$. So the
holomorphic pinching constant is $\frac{1}{4}$, which is dimension
free.
\end{proposition}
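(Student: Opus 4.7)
The plan is to use the transitive $U(n+1)$-action on $M$ --- the one sending $([z],[w]) \mapsto ([Uz],[\bar U w])$, which preserves $\sum z_iw_i$ --- to reduce the problem to computing $H$ at a single base point $p_0 = ([e_0],[e_1])$ and showing that $H(X)\in[\tfrac12,2]$ for every real unit $X\in T_{p_0}M$. Since $M$ is a smooth hypersurface in a K\"ahler manifold, I will apply the K\"ahler Gauss equation
\[
H^M(v) \;=\; H^{\tilde M}(v)-\|\sigma(v,v)\|^2,
\]
valid for any unit $(1,0)$-vector $v\in T^{1,0}_{p_0}M$, where $\sigma\colon \mathrm{Sym}^2T^{1,0}M\to N^{1,0}$ is the holomorphic second fundamental form.

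In affine coordinates $Z_i=z_i/z_0$ on the first factor and $W_i=w_i/w_1$ on the second, the defining function at $p_0$ reads $F=W_0+Z_1+\sum_{i\ge 2}Z_iW_i$, so the $(1,0)$-tangent space is $\{v^{Z_1}+v^{W_0}=0\}$, a unit $(1,0)$-normal is $\hat N=(\partial_{Z_1}+\partial_{W_0})/\sqrt 2$, and the Fubini-Study Christoffel symbols vanish at $p_0$. Extending a given tangent $v$ by holding $v^{Z_i},v^{W_i}$ constant for $i\ge 2$ and enforcing $dF(V)=0$ nearby, one checks that $\nabla_VV\big|_{p_0} = -2\bigl(\sum_{i\ge 2}v^{Z_i}v^{W_i}\bigr)\partial_{W_0}$, whose projection onto $\mathbb C\hat N$ gives
\[
\sigma(v,v) = -\sqrt 2\,\Bigl(\textstyle\sum_{i\ge 2}v^{Z_i}v^{W_i}\Bigr)\,\hat N, \qquad \|\sigma(v,v)\|^2 = 2\,\Bigl|\textstyle\sum_{i\ge 2}v^{Z_i}v^{W_i}\Bigr|^2.
\]

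Now I reduce to a normal form using the isotropy group $K=U(1)\times U(1)\times U(n-1)$ of $p_0$. The $U(n-1)$-factor acts on $\vec{c^Z}=(v^{Z_i})_{i\ge 2}$ and $\vec{c^W}=(v^{W_i})_{i\ge 2}$ as $(U',\bar{U'})$, whose complete set of orbit invariants is $|\vec{c^Z}|^2$, $|\vec{c^W}|^2$, and $(\vec{c^Z})^T\vec{c^W}$; the two $U(1)$-factors absorb the residual phases. Every unit $(1,0)$-vector is therefore $K$-equivalent to $v=c_1e_1+z\,\partial_{Z_2}+w\,\partial_{W_2}+\eta\,\partial_{W_3}$ with $z,w,\eta\ge 0$, where $e_1=(\partial_{Z_1}-\partial_{W_0})/\sqrt 2$. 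Setting $p=|c_1|^2,\ q=z^2,\ r=w^2,\ s=\eta^2$ (so that $p+q+r+s=1$), one finds $|v_1|^2_h=p/2+q$, $|v_2|^2_h=p/2+r+s$, $\|\sigma(v,v)\|^2=2qr$, and since each Fubini-Study factor satisfies $R(u,\bar u,u,\bar u)=2|u|^4$,
\[
H^M(v) = 2\bigl((p/2+q)^2+(p/2+r+s)^2\bigr)-2qr.
\]

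The last step is an elementary optimization. Writing $\alpha=p/2+q$ so that $p/2+r+s=1-\alpha$, the expression becomes $4\alpha^2-4\alpha+2-2qr$. The upper bound $H^M\le H^{\tilde M}\le 2$ is immediate from the Gauss equation. For the lower bound, fixing $\alpha\in[0,1]$ and maximizing $qr$ under $p+q+r+s=1$, $p/2+q=\alpha$, $p,q,r,s\ge 0$ yields $\max qr = \alpha(1-\alpha)$, attained at $p=s=0$, $q=\alpha$, $r=1-\alpha$; substituting gives $H^M\ge 6\alpha^2-6\alpha+2$, whose minimum on $[0,1]$ is $1/2$ at $\alpha=1/2$. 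Thus $1/2\le H^M\le 2$, and the holomorphic pinching constant is $1/4$, independent of $n$. The main obstacle is the normal-form step: I must verify carefully that the four invariants $(p,q,r,s)$ genuinely exhaust the $K$-orbits on unit $(1,0)$-vectors and that $H^M$ depends on these invariants alone; once this is in place, the argument reduces to one-variable calculus.
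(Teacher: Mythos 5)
Your argument is correct, but it takes a genuinely different route from the paper's. The paper works in an explicit parametrization of the hypersurface (for $n=2$), namely $(t_1,t_2,t_3)\mapsto [1,t_1,t_2]\times[-t_1-t_2t_3,1,t_3]$, writes out the induced metric matrix in these coordinates, and computes the full curvature tensor at the origin from $R_{i\bar jk\bar l}=-\partial^2\widetilde g_{k\bar l}/\partial t_i\partial\bar t_j+\widetilde g^{p\bar q}\,\partial_i\widetilde g_{k\bar q}\,\partial_{\bar j}\widetilde g_{p\bar l}$, obtaining the nonzero components $R_{1\bar11\bar1}=1$, $R_{2\bar22\bar2}=R_{3\bar33\bar3}=2$, $R_{1\bar12\bar2}=R_{1\bar13\bar3}=\tfrac12$, $R_{2\bar23\bar3}=-\tfrac12$, and reading off $\min H=H((e_2+e_3)/\sqrt2)=\tfrac12$ and $\max H=2$; the case of general $n$ is then asserted to follow by the same computation. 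Your Gauss-equation approach buys two things: the upper bound $H\le 2$ is immediate from curvature decrease for complex submanifolds, and the argument is uniform in $n$ from the outset rather than extrapolated from the threefold. What the paper's brute-force computation buys in exchange is the entire curvature tensor, from which it also notes that $g$ is K\"ahler--Einstein. Your numbers agree with the paper's (your $p=1$ gives $H(e_1)=1$, matching $R_{1\bar11\bar1}=1$; your minimizer $p=s=0$, $q=r=\tfrac12$ is the paper's $(e_2+e_3)/\sqrt2$). Finally, the step you flag as the main obstacle is dispensable: since
\[
H^M(v)=2\bigl(\tfrac{|c_1|^2}{2}+|\vec a|^2\bigr)^2+2\bigl(\tfrac{|c_1|^2}{2}+|\vec b|^2\bigr)^2-2\,\bigl|\vec a^{\,T}\vec b\bigr|^2
\]
depends only on $|c_1|^2$, $|\vec a|^2$, $|\vec b|^2$ and the modulus of $\vec a^{\,T}\vec b$, you can skip the $K$-orbit classification entirely and just invoke Cauchy--Schwarz, $|\vec a^{\,T}\vec b|^2\le|\vec a|^2|\vec b|^2$, with all intermediate values attained; this hands you exactly your optimization in $\alpha$ and $qr$ without any phase bookkeeping.
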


\begin{proof}

Let us work on the case $n=2$ and in the inhomogeneous coordinates $[1,
z_1, z_2]$ and $[w_1, 1, w_2]$. The hypersurface $M^3
\subset \mathbb{CP}^2 \times \mathbb{CP}^2$ is defined by
$w_1+z_1+z_2w_2=0$ and can be parametrized by
\begin{equation}
(t_1, t_2, t_3) \rightarrow [1, t_1, t_2] \times [-t_1 - t_2 t_3, 1,
t_3] .
\end{equation}

It follows that
\[
\frac{\partial }{\partial t_1}= \frac{\partial }{\partial
z_1}-\frac{\partial }{\partial w_1}, \ \frac{\partial }{\partial
t_2}=\frac{\partial }{\partial z_2}-t_3 \frac{\partial }{\partial
w_1}, \ \frac{\partial }{\partial t_3}=-t_2 \frac{\partial
}{\partial w_1}+\frac{\partial }{\partial w_2}.
\]

Therefore under $\{\frac{\partial }{\partial t_1}, \frac{\partial
}{\partial t_2}, \frac{\partial }{\partial t_3} \}$ the induced
metric $\widetilde{g}$ has the form:
\[
  \begin{pmatrix}
    g_{1 \bar{1}}+h_{1 \bar{1}}  &  g_{1 \bar{2}}+\overline{t_3} h_{1 \bar{1}}
    & \overline{t_{2}}h_{1 \bar{1}}+h_{1 \bar{2}} \\
    g_{2 \bar{1}}+\overline{t_3} h_{1 \bar{1}}  &
    g_{2 \bar{2}}+|t_3|^2 h_{1 \bar{1}}
    & t_3 \overline{t_2} h_{1 \bar{1}}-t_3 h_{1 \bar{2}} \\
    t_2 h_{1 \bar{1}}-h_{2 \bar{1}}  &
      t_2 \overline{t_3} h_{1 \bar{1}}-\overline{t_3} h_{2 \bar{1}}
    & |t_2|^2 h_{1 \bar{1}}+h_{2 \bar{2}}
  \end{pmatrix}
\] where $g_{i \bar{j}}=g(\frac{\partial }{\partial z_i},
\frac{\partial }{\partial \overline{z_j}})$ and $h_{i
\bar{j}}=g(\frac{\partial }{\partial w_i}, \frac{\partial }{\partial
\overline{w_j}})$ where $1 \leq i, j \leq 2$ are Fubini-Study
metrics on two factors $\mathbb{CP}^2$ respectively:
\[
g_{i \bar{j}}=\frac{\delta_{ij}}{1+|z|^2}-\frac{z_j
\overline{z_i}}{(1+|z|^2)^2}, \ \ h_{i
\bar{j}}=\frac{\delta_{ij}}{1+|w|^2}-\frac{w_j
\overline{w_i}}{(1+|w|^2)^2}.
\]

Recall that the curvature tensor of $\widetilde{g}$ is given by the
formula
\[
R_{i \bar{j} k \bar{l}}=-\frac{\partial^2 \widetilde{g}_{k
\bar{l}}}{ \partial {z_i} \partial \overline{z}_j}+\widetilde{g}^{p
\bar{q}} \frac{\partial \widetilde{g}_{k \bar{q}}}{\partial z_i}
\frac{\partial \widetilde{g}_{p \bar{l}}}{\partial \overline{z}_j}.
\]
Note that there is a natural $U(3)$-action on $M$ because of the
defining equation (\ref{defining flag}), which acts transitively.
Therefore it suffices to calculate the curvature tensor of the
induced metric $\widetilde{g}$ at the point $(t_1, t_2,
t_3)=(0,0,0)$. Now pick an orthonormal frame $\{e_1, e_2,
e_3\}=\{\frac{1}{\sqrt{2}} \frac{\partial }{\partial t_1},
\frac{\partial }{\partial t_2}, \frac{\partial }{\partial t_3} \}$.
Then a straight forward calculation shows that the only non
vanishing curvature components under $\{e_1, e_2, e_3\}$ are the
following:
\begin{align*}
R_{1 \bar{1} 1 \bar{1}}=1, \ \ R_{2 \bar{2} 2 \bar{2}}=2, \ \ R_{3
\bar{3} 3
\bar{3}}=2,\\
R_{1 \bar{1} 2 \bar{2}}=R_{1 \bar{1} 3 \bar{3}}=\frac{1}{2}, \ \ \
R_{2 \bar{2} 3 \bar{3}}=-\frac{1}{2}.
\end{align*}

From this, we get that $R_{i \bar{j}}=2\delta_{ij}$ for any $1 \leq i\neq j \leq 3$, so
$\widetilde{g}$ is K\"ahler-Einstein. Once we have all the curvature
components, it is direct to see that $\min_{||X||=1}
H(X)=H(\frac{e_2+e_3}{\sqrt{2}})=\frac{1}{2}$ and $\max_{||X||=1}
H(X)=2$.

The same argument works for the hypersurface defined by
$\sum_{i=1}^{n+1} z_i w_i = 0$ in $\mathbb{P}^n \times \mathbb{P}^n$. It gives the same
pinching constant $\frac{1}{4}$ for any $n \geq 2$.
\end{proof}

However, if we try a similar calculation on other types of bidegree
$(p, q)$ hypersurfaces in $\mathbb{CP}^n \times \mathbb{CP}^n$, the
argument breaks down.

As a simple example, consider the bidegree $(2,1)$ hypersurface
defined by $\sum_{i=0}^{2} z_i^2 w_i = 0$ in $\mathbb{CP}^2 \times
\mathbb{CP}^2$ given by homogeneous coordinates $([z],[w])$.
Consider the following parametrization of the hypersurface:
\[
(t_1, t_2, t_3) \rightarrow [1, t_1, t_2,] \times [-t_1^2-t_2^2 t_3,
1, t_3].
\] and
\[
\frac{\partial }{\partial t_1}= \frac{\partial }{\partial z_1}-2t_1
\frac{\partial }{\partial w_1}, \ \frac{\partial }{\partial
t_2}=\frac{\partial }{\partial z_2}-2 t_2 t_3 \frac{\partial
}{\partial w_1}, \ \frac{\partial }{\partial t_3}=-t_2^2
\frac{\partial }{\partial w_1}+\frac{\partial }{\partial w_2}.
\]

The corresponding $\widehat{g}$ induced from the product of
Fubini-Study metric on $\mathbb{CP}^2 \times \mathbb{CP}^2$ is
\[
\begin{pmatrix}
    g_{1 \bar{1}}+4|t_1|^2h_{1 \bar{1}}  &
    g_{1 \bar{2}}+4 t_1 \overline{t_2 t_3} h_{1 \bar{1}}
    & 2t_1\overline{t_{2}}^2 h_{1 \bar{1}}-2t_1 h_{1 \bar{2}} \\
    g_{2 \bar{1}}+4\overline{t_1}\overline{t_2 t_3} h_{1 \bar{1}}  &
    g_{2 \bar{2}}+4 |t_2|^2 |t_3|^2 h_{1 \bar{1}}
    & 2 |t_2|^2 \overline{t_2} t_3 \overline{t_2} h_{1 \bar{1}}-2t_2t_3 h_{1 \bar{2}} \\
    t_2^2 \overline{t_1} h_{1 \bar{1}}-2 \overline{t_1} h_{2 \bar{1}}  &
      2 |t_2|^2 t_2 \overline{t_2} \overline{t_3} h_{1 \bar{1}}-2\overline{t_2 t_3}\overline{t_3} h_{2 \bar{1}}
    & |t_2|^4 h_{1 \bar{1}}+h_{2 \bar{2}}
  \end{pmatrix}
\]

If we only calculate the curvature at $(t_1, t_2, t_3)=(0,0,0)$, we
already encounter some negativity of $H$. First note that
$(\frac{\partial }{\partial t_1}, \frac{\partial }{\partial t_2},
\frac{\partial }{\partial t_3})$ is orthonormal at $(0,0,0)$, then
it follows that
\[
R_{1 \bar{1} 1 \bar{1}}=-\frac{\partial^2 \widehat{g}_{1 \bar{1}}}{
\partial {t_1} \overline{t_1}}=-4 h_{1 \bar{1}}-
\frac{\partial^2 g_{1 \bar{1}}}{ \partial {z_1} \overline{z_1}}=-2.
\]

The same problem occurs for a general bidegree $(2,1)$ hypersurface
in $\mathbb{CP}^n \times \mathbb{CP}^n$. Of course, this just means
that for a bidegree $(2,1)$ hypersurface in $\mathbb{CP}^n \times
\mathbb{CP}^n$, the restriction of the product of the Fubini-Study
metric on the hypersurface does not have $H>0$. But presumably there
could be other metrics on it with $H>0$. This is indeed the case,
and we have the following result:

\begin{proposition} \label{proper kahler cone}
Let $M^n$ be any smooth bidegree $(p,1)$ hypersurface in $\mathbb
{CP}^r \times \mathbb {CP}^s$, where $n=r+s-1$, $p\geq 1$, and $r$,
$s\geq 2$. Then $M^n$ admits a K\"ahler metric with $H>0$. Morever,
when $p>r+1$, the K\"ahler classes of all the K\"ahler metrics with
$H>0$ form a proper subset of the K\"ahler cone of $M^n$.
\end{proposition}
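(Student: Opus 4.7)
The plan is to prove existence by identifying $M$ as a projective bundle over $\mathbb{CP}^r$ and invoking the Alvarez--Heier--Zheng theorem \cite{AHZ}, then to exclude certain K\"ahler classes using the positive-scalar-curvature obstruction that $H>0$ automatically carries. Writing the defining equation as $\sum_{i=0}^{s}P_i(z)w_i=0$ with $\deg_z P_i=p$, the first projection $\pi_1:M\to\mathbb{CP}^r$ realizes $M=\mathbb{P}(E)$, where $E$ is the rank-$s$ kernel in
\[
0\to E\to\mathcal{O}_{\mathbb{CP}^r}^{s+1}\to\mathcal{O}_{\mathbb{CP}^r}(p)\to 0,
\]
provided the $P_i$ are base-point free on $\mathbb{CP}^r$. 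Granting this, $(\mathbb{CP}^r,\omega_{FS})$ has $H>0$, so \cite{AHZ} supplies a K\"ahler metric $\pi_1^{*}\omega_{FS}+s\sqrt{-1}\,\Theta(\mathcal{O}_{\mathbb{P}(E)}(1))$ on $M$ with $H>0$ for all sufficiently small $s>0$.

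For the second assertion I would invoke Berger's averaging argument (already quoted in the paper): any K\"ahler metric with $H>0$ has pointwise positive scalar curvature, hence positive total scalar curvature; since the latter is a positive multiple of $c_1(M)\cdot[\omega]^{n-1}$, every K\"ahler class $[\omega]$ admitting an $H>0$ metric must satisfy $c_1(M)\cdot[\omega]^{n-1}>0$. With $A=H_1|_M$ and $B=H_2|_M$, adjunction yields $c_1(M)=(r+1-p)A+sB$, and the projection formula through $[M]=pH_1+H_2$ in $X=\mathbb{CP}^r\times\mathbb{CP}^s$ shows that on $M$ the only nonzero top monomials are $A^rB^{s-1}=1$ and $A^{r-1}B^s=p$. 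Restricting K\"ahler classes from $X$ puts $aA+bB$ in the K\"ahler cone of $M$ whenever $a,b>0$; expanding and regrouping,
\[
c_1(M)\cdot(aA+bB)^{n-1}=a^{r-2}b^{s-2}\Bigl[p(r{+}1{-}p)\tbinom{n-1}{r-2}b^{2}+\bigl(sp+(r{+}1{-}p)\bigr)\tbinom{n-1}{r-1}ab+s\tbinom{n-1}{r}a^{2}\Bigr].
\]
When $p>r+1$ the $b^2$-coefficient is strictly negative (using $r\geq 2$), so fixing $a$ and letting $b/a$ be sufficiently large forces this expression to be negative. Those K\"ahler classes therefore admit no metric with $H>0$, yielding the proper-subset assertion.

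The main obstacle is making the existence step work for \emph{every} smooth bidegree $(p,1)$ hypersurface, because the identification $M=\mathbb{P}(E)$ requires $P_0,\dots,P_s$ to have no common zero on $\mathbb{CP}^r$. When $r\leq s$ this is automatic from smoothness of $M$ (a Jacobian count at a hypothetical common zero fails to kill enough directions in $[w]$), but when $r>s+1$ a common zero is compatible with $M$ being smooth, and the fiber of $\pi_1$ then jumps to all of $\mathbb{CP}^s$. For such exceptional smooth hypersurfaces I would argue by a deformation: a generic small perturbation of the $P_i$ preserves smoothness and eliminates the common zeros, so \cite{AHZ} applies to the perturbed fiber, and openness of the $H>0$ condition under joint small perturbations of the complex structure and K\"ahler form delivers an $H>0$ metric on the original $M$ in a nearby K\"ahler class.
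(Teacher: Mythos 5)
Your overall route coincides with the paper's: realize $M$ as $\mathbb{P}(E)$ for the kernel $E$ of $\mathcal{O}^{\oplus(s+1)}\to\mathcal{O}(p)$ on $\mathbb{CP}^r$ and invoke \cite{AHZ}, then rule out K\"ahler classes with nonpositive total scalar curvature via Berger's averaging identity. The intersection-theoretic half is correct as you wrote it (the $b^2$-coefficient is indeed $p(r+1-p)\binom{n-1}{r-2}$, which corrects a harmless typo in the paper's displayed formula but does not change the sign analysis), and the conclusion for $p>r+1$ and $b/a\gg1$ is exactly the paper's.

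The one place you diverge is also the one place your argument breaks. You are right that the fibration step needs $P_0,\dots,P_s$ to be base-point free, that smoothness of $M$ forces this only when $s\geq r$ (by Euler's relation the gradients $\nabla P_i(z_0)$ lie in the $r$-dimensional space $z_0^{\perp}$, so injectivity of $w\mapsto\sum_i w_i\nabla P_i(z_0)$ already becomes possible at $r=s+1$, not only for $r>s+1$), and that genuine smooth examples with a base point exist (e.g.\ $z_1w_1+z_2w_2+z_3w_3=0$ in $\mathbb{CP}^3\times\mathbb{CP}^2$, whose fiber over $[0:0:0:1]$ is all of $\mathbb{CP}^2$). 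The paper simply asserts that $h$ is surjective, so you have put your finger on a real issue. But your proposed repair does not close it: openness of the condition $H>0$ lets you perturb a metric you already possess; it cannot transfer the $H>0$ metrics on the nearby fibers $M^{\epsilon}=\mathbb{P}(E^{\epsilon})$ back to the central fiber $M^{0}=M$, because $H>0$ is open rather than closed and the \cite{AHZ} metrics on $M^{\epsilon}$ need not converge with uniformly positive $H$ --- the admissible range of the fiber parameter and the curvature bounds depend on $E^{\epsilon}$, which degenerates as $\epsilon\to0$ (the fiber of $\pi_1$ jumps from $\mathbb{CP}^{s-1}$ to $\mathbb{CP}^{s}$ in the limit, so there is not even a common underlying smooth fibration along which to compare). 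As written, this final step is a gap; handling the base-point case would require either uniform curvature control in the limit or a direct construction on $M^{0}$ itself.
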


\begin{proof} [Proof of Proposition \ref{proper kahler cone}]
Let $[z]$ and $[w]$ be the homogeneous coordinates of
$\mathbb{CP}^r$ and $\mathbb {CP}^s$, respectively. Let $\pi:
\mathbb{CP}^r \times \mathbb{ CP}^s \rightarrow  \mathbb{CP}^r$ be
the projection map. Suppose that $M^n$ is defined by
\[
\sum_{i=1}^{s+1} f_i(z_1, \cdots, z_{r+1})w_i =0,
\] where each $f_i$ is a homogeneous polynomial of degree $p$.
Consider the sheaf map $h: {\mathcal O}^{\oplus (s+1)} \rightarrow
{\mathcal O}(p)$ on $\mathbb{CP}^r$ defined by
\[ h(e_i)=f_i(z), \ \
\ 1\leq i\leq s+1,
\] where $e_i = (0, \ldots , 0, 1, 0, \ldots , 0)$
has $1$ at the $i$-th position. Clearly, $h$ is surjective, and its
kernel sheaf $E$ is locally free. Since $M^n ={\mathbb P}(E)$ over
$\mathbb {CP}^r$, by the result of \cite{AHZ}, we know that $M^n$
admits K\"ahler metrics with $H>0$.

To see the second part of the statement, let us denote by $H_1$,
$H_2$ the hyperplane section from the two factors restricted on $M$,
then we have $c_1(M)=(r+1-p)H_1+sH_2$. Clearly, $H_1^{r+1}=0$,
$H_2^{s+1}=0$, and since $M^n \sim pH_1+H_2$, we have $H_1^r
H_2^{s-1}=1$ and $H_1^{r-1} H_2^{s}=p$ on $M^n$. For any K\"ahler
class $[\omega ] = aH_1+bH_2$ where $a>0$ and $b>0$, we have
\[
c_1(M)\cdot [\omega ]^{n-1} = a^{r-2}b^{s-2} \Big[ \begin{pmatrix}
    n-1 \\
    r
  \end{pmatrix}  sa^2 +
\begin{pmatrix}
    n-1 \\
    r-1
  \end{pmatrix} (r+1+sp-p)ab + \begin{pmatrix}
    n-1 \\
    r-2
  \end{pmatrix} (r+1-p)b^2 \Big].
\]
So when $p>r+1$ and $b>>a$, we know that the total scalar curvature
of $(M^n,\omega )$ is negative. Thus the K\"ahler classes of metrics
with $H>0$ can not fill in the entire K\"ahler cone.
\end{proof}

For a smooth bidegree $(p,2)$ hypersurface $M^n$ in $\mathbb {CP}^r
\times \mathbb {CP}^s$, where $n=r+s-1$ and $p\geq 2$, one may raise
the question of whether $M^n$ admits K\"ahler metrics with $H>0$?
The answer might be yes in view of Proposition \ref{proper kahler
cone}. Note that if we project to $\mathbb {CP}^r$, then $M$ becomes
a holomorphic fibration over $\mathbb {CP}^r$ whose generic fiber
are smooth quadrics.

\begin{question}
If $M^3$ is a compact K\"ahler manifold with local holomorphic
pinching constant strictly greater $\frac{1}{4}$, then is it
biholomorphic to a compact Hermitian symmetric space, i.e.
$\mathbb{CP}^3$, $\mathbb{CP}^{2} \times \mathbb{CP}^{1}$,
$\mathbb{CP}^{1} \times \mathbb{CP}^{1}\times \mathbb{CP}^{1}$, or
$Q^{3}$ which is the smooth quadric in $\mathbb{CP}^4$?
\end{question}

Let us conclude the discussion here by giving a couple of general
remarks. First, if we we want to construct metrics with $H>0$ from
the submanifold point of view, in particular, as complete
intersections. Then it seems difficult to find examples other than
those already known (such as Hermitian symmetric spaces or K\"ahler
$C$-spaces, or projectivized vector bundles covered in \cite{AHZ}).
For instance, if we consider a cubic hypersurface $M^n \subset
\mathbb {CP}^{n+1}$ and $g$ be the restriction on $M$ of the
Fubini-Study metric. Then it is unclear if $(M^n,g)$ can have $H>0$,
though we expect the answer is no. As another example, if we
consider the restriction of the ambient Fubini-Study metric onto a
complete intersection, where typically we need to restrict to degree
$1$ or $2$. Let us consider $M^n$ as the intersection of two
quadrics, in the case of $n=2$, it is the del Pezzo surface of
degree $4$, or $\mathbb{CP}^2$ blowing up $5$ points. It is unlikely
that the induced metric on $M^n$ could have $H>0$. We plan to
discuss these questions elsewhere.

Secondly, it is a general belief that the existence of a K\"ahler
metric of $H>0$ is a `large open' condition, as illustrated in this
paper on any Hirzebruch manifold. Therefore, it is reasonable to
expect if a projective manifold $M$ admits a K\"ahler metric $g_0$
of $H>0$, then there exsits a small deformation of such a metric
$g_1$ which lies in a Hodge class and still has $H>0$. hence one can
conclude from a theorem of Tian (\cite{Tian}) that $g_1$ can be
approximated by pull backs of Fubini-Study metrics by a sequence of
projective embeddings $\phi_k: M \rightarrow \mathbb{CP}^{N_k}$.
However, it seems difficult to construct examples of K\"ahler
metrics of $H>0$ in this way because of the implicit nature of
$\phi_k$ and $N_k$.

\vskip 0.2cm

\noindent\textbf{Acknowledgments.} We would like to thank Gordon
Heier for helpful suggestions and a few corrections on some
inaccuracies and typos on a preliminary version of this paper, and
Bennett Chow for his interests. Bo Yang is grateful for Xiaodong Cao
for the encouragement, the Math Department at Cornell University for
the excellent working condition, as well as Bin Guo, Zhan Li, and
Shijin Zhang for helpful discussions.

\end{document}